\newtheorem{theorem}{Theorem}
\newtheorem{prop}[theorem]{Proposition}
\newtheorem{lemma}[theorem]{Lemma}
\theoremstyle{definition}
\newtheorem{rem}[theorem]{Remark}
\newcommand{\N}{\mathbb{N}}
\newcommand{\Z}{\mathbb{Z}}
\newcommand{\R}{\mathbb{R}}
\newcommand{\C}{\mathbb{C}}
\newcommand\e{\mathrm{e}}
\newcommand\I{\mathrm{i}}
\newcommand\re{\operatorname{Re}}
\newcommand\im{\operatorname{Im}}
\newcommand{\set}[2]{\{#1 : #2 \}}
\def\restrict{\upharpoonright}
\newcommand\dom{\mathcal D}
\newcommand{\Ran}{\mathop{\rm Ran}}
\newcommand\eps\varepsilon
\renewcommand\epsilon\varepsilon
\renewcommand\rho\varrho
\newcommand\lm\lambda
\newcommand{\dist}{\operatorname{dist}}
\newcommand{\rd}{\mathrm{d}}
\newcommand{\beq}{\begin{equation}}
\newcommand{\eeq}{\end{equation}}
\newcommand{\be}{\begin{equation*}}
\newcommand{\ee}{\end{equation*}}
\newcommand{\bmat}{\begin{pmatrix}}
\newcommand{\emat}{\end{pmatrix}}
\begin{document}
\title[Counterexample to the Laptev--Safronov conjecture]{Counterexample to the Laptev--Safronov conjecture}
\subjclass[2020]{35P15, 31Q12.}
\author[S. Bögli]{Sabine Bögli}
\address[S. Bögli]{Department of Mathematical Sciences, Durham University, Upper Mountjoy Campus,  Durham DH1 3LE, United Kingdom}
\email{sabine.boegli@durham.ac.uk}
 \author[J.-C.\ Cuenin]{Jean-Claude Cuenin}
 \address[J.-C.\ Cuenin]{Department of Mathematical Sciences, Loughborough University, Loughborough,
 Leicestershire LE11 3TU, United Kingdom}
 \email{J.Cuenin@lboro.ac.uk}

\date{\today}


\begin{abstract}
We prove that the Laptev--Safronov conjecture (Comm.\ Math.\ Phys.\, 2009) is false in the range that is not covered by Frank's positive result (Bull.\ Lond.\ Math.\ Soc.\, 2011). The simple counterexample is adaptable to a large class of Schrödinger type operators, for which we also prove new sharp upper bounds.  
\end{abstract}

\maketitle

\section{Introduction}

Consider a Schrödinger operator $H_V=-\Delta+V$ on $L^2(\R^d)$ with a complex-valued potential $V$. The Laptev--Safronov conjecture \cite{MR2540070} stipulates that in $d\geq 2$ dimensions any \emph{non-positive} eigenvalue $z$ of $H_V$ satisfies the bound
\begin{align}\label{LT bound}
|z|^{\gamma}\leq D_{\gamma,d}\int_{\R^d}|V(x)|^{\gamma+\frac{d}{2}}\rd x
\end{align}      
for $0<\gamma\leq d/2$, and with $D_{\gamma,d}$ independent of $V$ and $z$. It is known that the condition $\gamma\leq d/2$ is necessary, see \cite{MR3627408}. The inequality \eqref{LT bound} is known to be true if $d=1$ and $\gamma=1/2$ or if $d\geq 2$ and $\gamma\leq 1/2$. The one-dimensional bound (with the sharp constant $D_{1/2,1}=1/2$) is due to Abramov-Aslanyan-Davies \cite{AAD01}, and the higher dimensional bound is due to Frank \cite{MR2820160}. Originally, these bounds were stated for $z\in\C\setminus \R_+$, but it was later realized by Frank and Simon \cite{MR3713021} that embedded eigenvalues $z\in\R_+$ can also be accommodated. In fact, Frank and Simon ``almost disproved" the Laptev--Safronov conjecture by constructing a counterexample (based on an earlier example of Ionescu and Jerison \cite{MR2024415}) that prohibits \eqref{LT bound} for $z\in \R_+$ whenever $d\geq 2$ and $\gamma>1/2$. Here we prove that the Laptev--Safronov conjecture is \textit{false} in the form originally stated. In the following we write $q=\gamma+d/2$.
For any $\epsilon>0$, let $\chi_\epsilon$ be the indicator function of 
$$\{x=(x_1,x')\in\R\times\R^{d-1}:\,|x_1|<\eps^{-1},|x'|<\eps^{-1/2}\}.$$
We construct potentials $V_\eps$, $\eps>0$, with $|V_\eps|\leq \eps \chi_\eps$ and such that $z_\eps=1+i\eps$ is an eigenvalue of $H_{V_\eps}$. This allows us to disprove the conjecture.

\begin{theorem}\label{thm LS}
Let $d\geq 2$ and $q>(d+1)/2$. Then
\begin{align*}
\limsup_{\eps\to 0}\, \frac{|z_\eps|^{q-\frac{d}{2}}}{\|V_\eps\|_q^q}=+\infty.
\end{align*}
\end{theorem}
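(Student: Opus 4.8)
The theorem follows immediately from the properties of the family $\{V_\eps\}_{\eps>0}$ announced above, so I would split the argument into a short deduction and the construction, which is where all the work lies.

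\emph{Deduction.} The function $\chi_\eps$ is the indicator of $\{x\in\R^d:|x_1|<\eps^{-1},\ |x'|<\eps^{-1/2}\}$, whose Lebesgue measure equals $c_d\,\eps^{-(d+1)/2}$ with $c_d=2|B^{d-1}_1|$. Since $|V_\eps|\le\eps\,\chi_\eps$ pointwise,
\begin{equation*}
\|V_\eps\|_q^q=\int_{\R^d}|V_\eps(x)|^q\,\rd x\ \le\ \eps^{q}\,\bigl|\{|x_1|<\eps^{-1},\,|x'|<\eps^{-1/2}\}\bigr|\ =\ c_d\,\eps^{\,q-\frac{d+1}{2}} .
\end{equation*}
On the other hand $|z_\eps|^{q-d/2}=(1+\eps^2)^{(q-d/2)/2}\to1$ as $\eps\to0$. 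Because $q>(d+1)/2$ the exponent $q-\tfrac{d+1}{2}$ is strictly positive, so $\|V_\eps\|_q^q\to0$ and the quotient $|z_\eps|^{q-d/2}/\|V_\eps\|_q^q$ tends to $+\infty$; a fortiori its $\limsup$ is $+\infty$. (The borderline $q=\tfrac{d+1}{2}$, i.e.\ $\gamma=\tfrac12$, is exactly the range covered by Frank's positive result.)

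\emph{The construction.} One must produce, for each small $\eps>0$, a potential with $|V_\eps|\le\eps\chi_\eps$ for which $z_\eps=1+\I\eps$ is a genuine $L^2$-eigenvalue of $-\Delta+V_\eps$. Since $\sqrt{z_\eps}\approx1$, I would build a wave packet concentrated along the $x_1$-axis from the WKB/paraxial ansatz $u_\eps(x)=\e^{\I\sqrt{z_\eps}\,x_1}\,\phi_\eps(x)$, with $\phi_\eps$ slowly varying — e.g.\ a (suitably truncated) complex Gaussian in the transverse variable $x'$ of width $\sim\eps^{-1/2}$, which never vanishes and, being Gaussian, keeps essentially the same width under the paraxial flow $2\I\sqrt{z_\eps}\,\partial_1\phi_\eps+\Delta_{x'}\phi_\eps=0$ over a propagation length $|x_1|\sim\eps^{-1}$. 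Two scales then conspire to produce exactly the box: the transverse spreading over length $\eps^{-1}$ is $\sim\eps^{-1/2}$, and $\eps^{-1}\sim1/\im\sqrt{z_\eps}$ is also the decay length of the evanescent free solutions of $(-\Delta-z_\eps)u=0$ (these decay because $z_\eps\notin[0,\infty)$). Gluing $u_\eps$ outside the box to such a decaying free solution, one defines
\begin{equation*}
V_\eps:=\frac{(z_\eps+\Delta)u_\eps}{u_\eps}\qquad(\text{with }V_\eps:=0\text{ where }u_\eps=0),
\end{equation*}
which then vanishes identically outside the box and equals $(z_\eps+\Delta)u_\eps/u_\eps$ inside; the latter is $O(\eps)$ — its leading part being the residual $\I\eps$ by which the paraxial equation fails to be the full Helmholtz equation — and a tuning of the Gaussian parameters and of the gluing brings it to $\le\eps$. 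One checks $u_\eps\in L^2(\R^d)$, so that $z_\eps$ is an eigenvalue of $-\Delta+V_\eps$. (An alternative is to prescribe $V_\eps=\I\eps\,\rho(\eps x_1,\eps^{1/2}x')$ for a fixed profile $\rho$ and solve for the remaining parameters by an implicit-function/Birman–Schwinger argument; the support, and hence the exponent, is the same.)

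The construction is the whole difficulty. The two delicate points are (i) the mutual compatibility of the three demands on $u_\eps$ — that $(z_\eps+\Delta)u_\eps/u_\eps$ vanish off the box, be $\le\eps$ on it, and give a function in $L^2$ — since the last forces the ansatz to genuinely decay away from the box (the potential must reflect the wave, not just let it oscillate through), while the cutoffs needed to realise this must not destroy the pointwise bound; and (ii) tracking all constants so that the bound holds with the exact constant $\eps$, not merely $C\eps$. It is worth recording that $V_\eps$ being of size $\eps$ is unavoidable: from $u_\eps=-(-\Delta-z_\eps)^{-1}(V_\eps u_\eps)$ and $\|(-\Delta-z_\eps)^{-1}\|=1/\dist(z_\eps,\sigma(-\Delta))=1/\eps$ one gets $\|V_\eps\|_\infty\ge\eps$; together with the box volume $\sim\eps^{-(d+1)/2}$ this yields $\|V_\eps\|_q^q\sim\eps^{\,q-(d+1)/2}$, and it is precisely the positivity of that exponent, i.e.\ $q>(d+1)/2$, that lets the example defeat \eqref{LT bound}.
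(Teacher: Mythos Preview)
Your deduction is correct and is the paper's own argument: given $|V_\eps|\le C\eps\chi_\eps$ and $z_\eps=1+\I\eps$ an eigenvalue, one has $\|V_\eps\|_q^q\lesssim\eps^{q-(d+1)/2}\to0$ for $q>(d+1)/2$ while $|z_\eps|\to1$. (Your worry~(ii) about needing the exact constant $\eps$ rather than $C\eps$ is misplaced; any fixed constant suffices for the theorem.)

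The construction, however, is only sketched, and you yourself flag its difficulties without resolving them. The explicit-ansatz route---paraxial wave packet inside the tube, glued to decaying free solutions of $(-\Delta-z_\eps)u=0$ outside, then $V_\eps:=(z_\eps+\Delta)u_\eps/u_\eps$---is the Frank--Simon/Ionescu--Jerison strategy for \emph{embedded} eigenvalues, and the paper explicitly says it does \emph{not} follow it here. The gluing is genuinely delicate (different decaying branches must be matched at the two ends of the tube while simultaneously controlling the transverse profile, and $u_\eps$ must avoid zeros in the transition region where the quotient could blow up), and you do not carry any of this out; as written this is not a proof. The paper instead works with the compact, nonnegative, \emph{self-adjoint} operator $K_\eps:=\chi_\eps\,\im(-\Delta-z_\eps)^{-1}\,\chi_\eps$. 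Its spectral radius equals $\|K_\eps\|$, so there is a real eigenfunction $\phi$ with $K_\eps\phi=\|K_\eps\|\phi$; setting $\psi=(-\Delta-z_\eps)^{-1}\chi_\eps\phi$, the eigenvalue equation rewrites as $(-\Delta-z_\eps)\psi=\|K_\eps\|^{-1}\chi_\eps\im\psi$, whence $V_\eps:=-\|K_\eps\|^{-1}\chi_\eps\,\im\psi/\psi$ (zero on the nodal set) makes $z_\eps$ an eigenvalue with $|V_\eps|\le\|K_\eps\|^{-1}\chi_\eps$. The remaining input, $\eps\|K_\eps\|\gtrsim1$, follows from a short strong-convergence argument after rescaling (or quantitatively by testing against a Knapp wavepacket). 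The key idea you are missing is that passing to the imaginary part of the resolvent gives a self-adjoint operator, which converts the existence problem into a norm lower bound and eliminates any need for an explicit eigenfunction.
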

%
Actually, our counterexample shows more, namely that the following substitute of \eqref{LT bound} for ``long-range" potentials (i.e.\ $q>(d+1)/2$), due to Frank \cite{MR3717979}, 
\begin{align}\label{Frank bound long range}
\dist(z,\R_+)^{q-\frac{d+1}{2}}|z|^{\frac{1}{2}}\leq C_{q,d}\|V\|_q^q,
\end{align}  
is \emph{sharp} (in the sense that the exponent of $\dist(z,\R_+)$ cannot be made smaller while preserving scale-invariance). 
\begin{theorem}
Let $d\geq 2$ and $q\geq (d+1)/2$. Then
\begin{align*}
\liminf_{\eps\to 0}\, \frac{\dist(z_\eps,\R_+)^{q-\frac{d+1}{2}}|z_\eps|^{\frac{1}{2}}}{\|V_\eps\|_q^q}>0.
\end{align*}
\end{theorem}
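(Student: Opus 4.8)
The plan is to read everything off the construction of $V_\eps$ itself: the only inputs are the explicit value $z_\eps=1+i\eps$ and the pointwise bound $|V_\eps|\le\eps\chi_\eps$, after which the statement reduces to a one-line scaling computation, with no new analytic estimate required.

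First I would evaluate the numerator. Since $z_\eps=1+i\eps$ lies just above the positive real half-line, $\dist(z_\eps,\R_+)=\eps$, while $|z_\eps|^{1/2}=(1+\eps^2)^{1/4}\ge 1$, so
\be
\dist(z_\eps,\R_+)^{q-\frac{d+1}{2}}\,|z_\eps|^{\frac12}=\eps^{q-\frac{d+1}{2}}\,(1+\eps^2)^{1/4}\ \ge\ \eps^{q-\frac{d+1}{2}}.
\ee
Next I would bound the denominator from above. From $|V_\eps|\le\eps\chi_\eps$ and the definition of $\chi_\eps$ as the indicator of a cylinder of side lengths $\eps^{-1}$ and $\eps^{-1/2}$ (the latter in $d-1$ directions),
\be
\|V_\eps\|_q^q=\int_{\R^d}|V_\eps|^q\le\eps^q\int_{\R^d}\chi_\eps=\eps^q\cdot 2\eps^{-1}\cdot\omega_{d-1}\eps^{-(d-1)/2}=2\,\omega_{d-1}\,\eps^{q-\frac{d+1}{2}},
\ee
where $\omega_{d-1}$ is the volume of the unit ball in $\R^{d-1}$. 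Dividing the two displays, the quotient in the statement is at least $(2\,\omega_{d-1})^{-1}$ for every $\eps>0$ (and it is well defined, since $z_\eps\notin\R$ forces $V_\eps\not\equiv 0$), hence $\liminf_{\eps\to 0}$ of it is at least $(2\,\omega_{d-1})^{-1}>0$.

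Two final comments. The hypothesis $q\ge(d+1)/2$ is not used above; it merely delimits the range in which the exponent $q-\frac{d+1}{2}$ in \eqref{Frank bound long range} is nonnegative, so that lowering that exponent would make \eqref{Frank bound long range} fail along the sequence $(V_\eps,z_\eps)$ — which is precisely the asserted sharpness. As for difficulty, there is essentially no obstacle \emph{in this theorem}: all the substance lies in the construction of the family $V_\eps$ with $z_\eps=1+i\eps$ an eigenvalue of $H_{V_\eps}$ and $|V_\eps|\le\eps\chi_\eps$, which is the heart of the paper. Given that construction, the only thing to watch is that the pointwise bound $|V_\eps|\le\eps\chi_\eps$ holds on the whole cylinder of the stated dimensions, so that $\|V_\eps\|_q^q$ is no larger than a constant times $\eps^{q-\frac{d+1}{2}}$; combining this with \eqref{Frank bound long range} one even obtains the two-sided estimate $\dist(z_\eps,\R_+)^{q-\frac{d+1}{2}}|z_\eps|^{\frac12}\asymp\|V_\eps\|_q^q$ along the sequence.
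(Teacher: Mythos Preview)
Your proof is correct and follows exactly the paper's approach: the paper derives $\|V_\eps\|_q\lesssim\eps^{1-\frac{d+1}{2q}}$ from the pointwise bound $|V_\eps|\le\eps\chi_\eps$ and the volume of the tube $T_\eps$, then observes that $|z_\eps|\asymp 1$ and $\dist(z_\eps,\R_+)=\eps$, which is precisely your computation with explicit constants.
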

In addition, the same example saturates the recent bound of the second author \cite[Th.\ 1.1]{MR4104544}, which states that
\begin{align}\label{DN higher dim}
|z|^{\frac{1}{2}}\leq C_d\sup_{y\in\R^d}\int_{\R^d}|V(x)|^{\frac{d+1}{2}}\exp(-E|x-y|)\rd x,
\end{align} 
with $E=\im\sqrt{z}>0$, and generalizes the one-dimensional analog of Davies and Nath \cite{MR1946184} to higher dimensions. Note that estimating the right hand side of \eqref{DN higher dim} from above by the same expression with $E=0$ results in the endpoint estimate \eqref{LT bound} with $\gamma=1/2$ (or equivalently $q=(d+1)/2$). Hence the endpoint case in Theorem~2 already implies that \eqref{DN higher dim} is optimal in some sense. 
The exponential factor in \eqref{DN higher dim} effectively localizes the integration to a ball $B(y,C/E)$.
Moreover, the right hand side of~\eqref{DN higher dim} may be much smaller than that of \eqref{Frank bound long range}.
There are estimates similar to \eqref{DN higher dim} for any $q\in (d/2,(d+1)/2]$, see~\cite{MR4104544}. We do not state these here but remark that our counterexample also shows that an analog of \cite[Th. 1]{MR4104544} cannot hold for $q>(d+1)/2$. For brevity, we denote the right hand side of~\eqref{DN higher dim} by $F_V(E)$. 
\begin{theorem}
Let $d\geq 2$. There exists $C_d'>0$ such that for all $L\geq 1$
\begin{align*}
\liminf_{\eps\to 0}\, \frac{|z_\eps|^{\frac{1}{2}}}{F_V(L\im\sqrt{z_\eps})}\geq C_d'\,L.
\end{align*}
\end{theorem}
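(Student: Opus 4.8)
My plan is to leverage the explicit structure of the potentials $V_\eps$ — the same ones used in Theorems 1 and 2 — together with the known asymptotics of $\|V_\eps\|_q^q$ and $\dist(z_\eps,\R_+)$. The quantity $F_V(E)$ is a supremum over $y\in\R^d$ of an $L^{(d+1)/2}$-mass of $V$ weighted by an exponentially decaying factor $\exp(-E|x-y|)$. With $z_\eps = 1+i\eps$ we have $\im\sqrt{z_\eps}\sim \eps/2$ as $\eps\to 0$, so $E = L\im\sqrt{z_\eps}\sim L\eps/2$. The exponential factor $\exp(-E|x-y|)$ is therefore essentially constant ($\approx 1$) on balls of radius $\lesssim 1/E \sim 1/(L\eps)$, and decays on larger scales. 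The support of $V_\eps$ is (contained in) the anisotropic box $|x_1|<\eps^{-1}$, $|x'|<\eps^{-1/2}$, which in the $x_1$-direction has length $\sim \eps^{-1}$. The key geometric observation is that for $L\geq 1$ the exponential weight genuinely cuts down the integral in the long $x_1$-direction: integrating $\exp(-E|x_1-y_1|)$ over $|x_1|<\eps^{-1}$ gives something of order $\min(\eps^{-1}, E^{-1}) \sim (L\eps)^{-1}$ rather than the full length $\eps^{-1}$, a gain of a factor $L^{-1}$ (for the best choice of $y$, centered in the box).

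First I would record the precise asymptotics: from the construction used for Theorems 1 and 2, one has $|V_\eps|\le \eps\chi_\eps$ with (after the relevant normalization ensuring $z_\eps$ is an eigenvalue) $\|V_\eps\|_{(d+1)/2}^{(d+1)/2} \sim c_d\,\eps^{(d+1)/2}\cdot|\supp\chi_\eps| \sim c_d'$, i.e.\ a quantity bounded above and below as $\eps\to 0$; more to the point, I only need a two-sided control of $F_V(0)$, which is exactly (a multiple of) $\|V_\eps\|_{(d+1)/2}^{(d+1)/2}$, since by Theorem 2 in the endpoint case $F_V(0)\gtrsim |z_\eps|^{1/2}\sim 1$, and from $|V_\eps|\le\eps\chi_\eps$ one gets $F_V(0)\le \eps^{(d+1)/2}|\supp\chi_\eps|\lesssim 1$. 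So $F_V(0)\sim 1$. Next, for $E=L\im\sqrt{z_\eps}\sim L\eps/2$, I would bound $F_V(E)$ from above: using $|V_\eps|\le\eps\chi_\eps$,
\begin{align*}
F_V(E)\;\le\;\eps^{\frac{d+1}{2}}\sup_{y}\int_{\R^d}\chi_\eps(x)\exp(-E|x-y|)\,\rd x.
\end{align*}
Because $\chi_\eps$ is a product of one-dimensional indicators and $\exp(-E|x-y|)\le \exp(-E|x_1-y_1|)$, the integral factorizes/estimates as
\begin{align*}
\int_{\R^d}\chi_\eps(x)\exp(-E|x-y|)\,\rd x
\;\le\; \Big(\int_{|x_1|<\eps^{-1}}\exp(-E|x_1-y_1|)\,\rd x_1\Big)\cdot |B'(0,\eps^{-1/2})|
\;\lesssim\; \frac{1}{E}\cdot \eps^{-\frac{d-1}{2}},
\end{align*}
uniformly in $y$, since $\int_\R \exp(-E|t|)\,\rd t = 2/E$. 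Hence $F_V(E)\lesssim \eps^{(d+1)/2}\cdot E^{-1}\eps^{-(d-1)/2} = \eps\cdot E^{-1}\sim \eps/(L\eps) = 1/L$. Combined with $|z_\eps|^{1/2}\sim 1$ this gives $|z_\eps|^{1/2}/F_V(L\im\sqrt{z_\eps})\gtrsim L$, uniformly in $\eps$ small, which yields the claimed $\liminf$ bound with some $C_d'>0$.

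The main obstacle — really the only nontrivial point — is making the \emph{upper} bound on $F_V(E)$ honest: one must check that replacing $|x-y|$ by $|x_1-y_1|$ in the exponent and then bounding the transverse integral by the volume $|B'(0,\eps^{-1/2})|\sim\eps^{-(d-1)/2}$ does not lose more than a constant, and that the bound $|V_\eps|\le\eps\chi_\eps$ (rather than the true, possibly smaller, pointwise size of $V_\eps$) is still good enough — it is, because the matching lower bound $F_V(0)\sim 1$ already saturates the same volume factor, so no powers of $\eps$ are wasted. One should also confirm the elementary asymptotic $\im\sqrt{1+i\eps} = \sqrt{(\sqrt{1+\eps^2}-1)/2}\sim \eps/2$ so that $E\sim L\eps/2$, and note that for $L\ge 1$ we indeed have $E^{-1}\lesssim\eps^{-1}$, i.e.\ the exponential cutoff is the binding constraint in the $x_1$-direction (for $L<1$ one would just recover $F_V(E)\sim 1$ and the stated bound becomes vacuous, consistent with only claiming it for $L\ge 1$). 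Everything else is bookkeeping with the explicit $\chi_\eps$.
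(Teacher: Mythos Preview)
Your proposal is correct and follows essentially the same approach as the paper: bound $|V_\eps|\le\eps\chi_\eps$, use $\im\sqrt{z_\eps}\sim\eps/2$, and estimate the resulting exponential integral over the tube $T_\eps$ to obtain $F_V(L\im\sqrt{z_\eps})\lesssim L^{-1}$. The only cosmetic difference is that the paper performs the anisotropic change of variables $(x_1,x')=(\eps^{-1}y_1,\eps^{-1/2}y')$ to reduce to an $\eps$-independent integral over $T_1$, whereas you bound directly via $|x-y|\ge|x_1-y_1|$ and factorize; your treatment of the supremum over $y$ is in fact slightly cleaner, and the detour through $F_V(0)\sim 1$ is unnecessary but harmless.
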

These three theorems show that the bounds \eqref{LT bound},\eqref{Frank bound long range},\eqref{DN higher dim} provide a rather complete picture of sharp eigenvalue inequalities for Schrödinger operators with complex potentials. Some refinements for singular potentials are known, see e.g.\ \cite{MR2820160}, \cite{MR3865141}, \cite{MR3950666}, \cite{cassano2021eigenvalue}, but we focus here on the long-range aspects of the potential. 
This is reflected by the fact that the construction of our counterexample is local in Fourier space, similar to the examples for embedded eigenvalues in \cite{MR4107520}, where a connection between the aforementioned Ionescu--Jerison example and the ``Knapp example" in Fourier restriction theory (see e.g.\ \cite{MR1232192}, \cite{MR3243741} , \cite{MR3052498}, \cite{MR3971577} for textbook presentations) was made. The examples in \cite{MR4107520} are based on superpositions of infinitely many Knapp wavepackets, while our example here is based on a single such wavepacket.

As in \cite{MR4107520}, the locality in Fourier space affords the flexibility to adapt the counterexample to a large class of Schrödinger type operators of the form
\begin{align}\label{gen. Schroedinger op.}
H_V=h_0(D)+V(x),
\end{align}
where $h_0$ is a tempered distribution on $\R^d$, smooth in a neighborhood of some point $\xi^0\in\R^d$ and such that $\lambda:=h_0(\xi^0)$ is a regular value of $h_0$. This means that the isoenergy surface
\begin{align}\label{Fermi surface}
S_{\lambda}=\set{\xi\in\R^d}{h_0(\xi)=\lambda}
\end{align}
is a smooth nonempty hyersurface near $\xi^0$. 
Here $h_0(D)f=\mathcal{F}^{-1}(h_0\hat{f})$ is the Fourier multiplier corresponding to $h_0$ and $\mathcal{F}^{-1}$ is the inverse Fourier transform. It is well known that upper bounds for the resolvent $(H_0-z)^{-1}$, for $z$ close to $\lambda$, crucially depend on curvature properties of $S_{\lambda}$, see e.g.\ \cite{MR620265}, \cite{MR3608659}, \cite{MR4153099}. For the Laplacian $H_0=-\Delta$, i.e.\ $h_0(\xi)=\xi^2$, the surface $S_{\lambda}=\sqrt{\lambda}\mathbb{S}^{d-1}$ has everywhere nonvanishing Gauss curvature if $\lambda>0$. This fact lies at the heart of the Stein--Tomas theorem as well as the uniform resolvent estimates of Kenig--Ruiz--Sogge \cite{MR894584} that are behind the upper bound \eqref{LT bound} for $\gamma\leq 1/2$. We will prove generalizations of \eqref{Frank bound long range}, \eqref{DN higher dim} in Section \ref{Section upper bounds generalized kin. energies} for operators of the form~\eqref{gen. Schroedinger op.} 
(we actually allow $V$ to be a pseudodifferential operator). Our counterexamples show that these upper bounds are sharp. To simplify the exposition we state the result for the fractional Laplacian $H_0=(-\Delta)^s$. We remark that part (i) of the following theorem was already proved in \cite[Th. 6.1]{MR3608659} (see also \cite{MR3739326} for related resolvent estimates).

\begin{theorem}\label{theorem fractional}
Let $d\geq 1$, $s>0$ and $q\geq q_s$, where
\begin{align}\label{def. qs}
q_s:=
\begin{cases}
d/s\quad&\mbox{if }s<d,\\
1+\quad&\mbox{if }s=d,\\
1\quad&\mbox{if }s>d.
\end{cases}
\end{align}
Let $H_V=(-\Delta)^{s/2}+V$. Then then any eigenvalue $z\in\C$ of $H_V$ satisfies the following.
\begin{enumerate}
\item[{\rm(i)}] If $q\leq (d+1)/2$, then
\begin{align}\label{first estimate fractional}
|z|^{q-\frac{d}{s}}\leq D_{d,s,q} \|V\|_{q}^{q}.
\end{align}
\item[{\rm(ii)}] If $q>(d+1)/2$, then 
\begin{align}\label{second estimate fractional}
\dist(z,\R_+)^{q-\frac{d+1}{2}}|z|^{\frac{d+1}{2}-\frac{d}{s}}\leq D_{d,s,q} \|V\|_{q}^{q}.
\end{align}
\item[{\rm(iii)}] For any natural number $N$,
\begin{align*}
|z|^{\frac{d+1}{2}-\frac{d}{s}}\leq C_{d,s,N}\sup_{y\in\R^d}\int_{\R^d}(1+|\im z(x-y)|)^{-N}|V(x)|^{\frac{d+1}{2}}\rd x.
\end{align*}
\end{enumerate}
\end{theorem}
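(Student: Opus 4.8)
The plan is to derive all three bounds from the Birman--Schwinger principle combined with $L^{p'}\!\to\!L^p$ estimates for the free resolvent $R_0(z)=((-\Delta)^{s/2}-z)^{-1}$. Factor $V=vw$ with $v=|V|^{1/2}$, $w=|V|^{1/2}\operatorname{sgn}V$. If $z\in\C$ is an eigenvalue of $H_V$ with eigenfunction $\psi\in L^2(\R^d)$, then $V\psi=((-\Delta)^{s/2}-z)\psi\in L^2$ and $\phi:=v\psi$ solves $\phi=-vR_0(z)w\phi$; for $z\in(0,\infty)$ one reads $R_0(z)$ as a boundary value $R_0(z\pm\I0)$ and justifies this as in Frank--Simon \cite{MR3713021}, using that $(-\Delta)^{s/2}$ has purely absolutely continuous spectrum (so $\phi\neq0$) and the limiting absorption estimate in the Stein--Tomas range. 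In every case $\phi\neq0$, so $\|vR_0(z)w\|_{L^2\to L^2}\ge1$, while H\"older's inequality yields, for exponents with $\tfrac1{p'}-\tfrac1p=\tfrac1q$ and $\tfrac1{p'}+\tfrac1p=1$,
\begin{align*}
1\le\|vR_0(z)w\|_{L^2\to L^2}\le\|V\|_{L^q}\,\|R_0(z)\|_{L^{p'}\to L^p}.
\end{align*}
A dilation of the Fourier variable gives $\|R_0(z)\|_{L^{p'}\to L^p}=|z|^{-1+\frac d{sq}}\|R_0(z/|z|)\|_{L^{p'}\to L^p}$, reducing matters to $|z|=1$.

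For (i), in the range $q_s\le q\le(d+1)/2$ one has the uniform bound $\|R_0(z)\|_{L^{p'}\to L^p}\le D_{d,s,q}$ at $|z|=1$, uniformly up to the real axis by limiting absorption; this is \cite[Th.~6.1]{MR3608659} and rests on the nonvanishing Gauss curvature of $S_\lambda=\lambda^{1/s}\mathbb{S}^{d-1}$ (Stein--Tomas). Substituting and taking $q$-th powers gives \eqref{first estimate fractional}. For (ii), with $q>(d+1)/2$ this fails, but applying Riesz--Thorin interpolation to the single operator $R_0(z)$ between its endpoint bound at $q=(d+1)/2$ and the elementary bound $\|R_0(z)\|_{L^2\to L^2}=\dist(z,\R_+)^{-1}$ gives $\|R_0(z)\|_{L^{p'}\to L^p}\le D_{d,s,q}\,\dist(z,\R_+)^{-(1-\frac{d+1}{2q})}$ at $|z|=1$; undoing the scaling and rearranging yields \eqref{second estimate fractional} (vacuous for $z\in\R_+$).

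For (iii) fix $q=(d+1)/2$ and, after the reduction to $|z|=1$, partition $\R^d$ into cubes $\{Q_j\}$ of sidelength $\sim|\im z|^{-1}$, writing $vR_0(z)w=\sum_{j,k}(\mathbf{1}_{Q_j}v)R_0(z)(\mathbf{1}_{Q_k}w)$. The symbol $(|\xi|^s-z)^{-1}$ is smooth away from the sphere $|\xi|=|z|^{1/s}$, near which the set $\{|\,|\xi|^s-z\,|\lesssim\dist(z,\R_+)\}$ has width $\asymp\dist(z,\R_+)$; consequently the truncated resolvent obeys, for every $N$, $\|\mathbf{1}_{Q_j}R_0(z)\mathbf{1}_{Q_k}\|_{L^{p'}\to L^p}\le C_N|z|^{-1+\frac d{sq}}(1+|j-k|)^{-N}$, the decay for well-separated cubes coming from that smoothness and the bound for neighbouring cubes from the Stein--Tomas estimate with spatial cut-offs. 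By H\"older this gives $\|(\mathbf{1}_{Q_j}v)R_0(z)(\mathbf{1}_{Q_k}w)\|_{L^2\to L^2}\le C_N|z|^{-1+\frac d{sq}}(1+|j-k|)^{-N}\|V\|_{L^q(Q_j)}^{1/2}\|V\|_{L^q(Q_k)}^{1/2}$, and since the $\mathbf{1}_{Q_j}v$ (resp.\ $\mathbf{1}_{Q_k}w$) have disjoint supports, a Schur test on the resulting matrix yields $\|vR_0(z)w\|_{L^2\to L^2}\le C_N|z|^{-1+\frac d{sq}}\sup_j\|V\|_{L^q(Q_j)}$. Finally $\sup_j\|V\|_{L^q(Q_j)}\le C_N(\sup_{y\in\R^d}\int_{\R^d}(1+|\im z||x-y|)^{-N}|V(x)|^{q}\,\rd x)^{1/q}$ (compare a cube norm with the weighted integral, $N>d$), and together with $\|vR_0(z)w\|_{L^2\to L^2}\ge1$ and $q-\tfrac ds=\tfrac{d+1}2-\tfrac ds$ this proves (iii).

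The main obstacle is the localized endpoint estimate in (iii): establishing the Stein--Tomas bound with spatial cut-offs while simultaneously controlling the off-diagonal decay of the truncated resolvent at the scale set by $\dist(z,\R_+)$, so that the Schur summation converges and reproduces exactly the weight $(1+|\im z||x-y|)^{-N}$. This is where curvature enters, through a stationary-phase analysis on $S_\lambda$; for general $H_0=h_0(D)$ as in \eqref{gen. Schroedinger op.} the argument is carried out in Section~\ref{Section upper bounds generalized kin. energies} with the sphere replaced by the isoenergy surface \eqref{Fermi surface}, and Theorem~\ref{theorem fractional} is the special case $h_0(\xi)=|\xi|^s$. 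The other inputs---the uniform resolvent bound \cite[Th.~6.1]{MR3608659}, the interpolation, and the limiting absorption principle for embedded real eigenvalues in the Stein--Tomas range---are either in the literature or routine.
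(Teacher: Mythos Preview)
Your approach is correct and essentially follows the route that the paper explicitly acknowledges as an alternative: as noted just before Theorem~\ref{theorem fractional}, ``The proof of (ii), (iii) could be obtained by closely following the arguments in \cite{MR3717979} and \cite{MR4104544}, respectively.'' You do precisely that---Birman--Schwinger, the uniform resolvent bound of \cite{MR3608659} for (i), Riesz--Thorin interpolation with the trivial $L^2$ bound for (ii), and for (iii) a cube decomposition at scale $|\im z|^{-1}$ plus a Schur test, which is the strategy of \cite{MR4104544}.

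The paper takes a different route. Rather than applying the Birman--Schwinger principle directly to the full resolvent, it first uses the smooth Feshbach--Schur map to split off the Fourier region near the isoenergy surface $S_\lambda$ from its complement (the latter handled by the elliptic black-box assumptions \eqref{black box}, \eqref{black box 2}), and then reduces everything to the abstract Propositions~\ref{thm universal bound 1 psdo} and~\ref{thm universal bound 2 psdo}. For (iii) in particular, instead of your cube decomposition and Schur test, the paper runs Stein interpolation on the analytic family $K_\zeta=V^\zeta\eta[h_0(D)-z]^{-\zeta}$ between a trivial $L^2\to L^2$ bound at $\re\zeta=0$ and an $L^1\to L^1$ bound at $\re\zeta=q$ coming from the kernel estimate of Lemma~\ref{lemma DN}. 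What the paper's approach buys is uniformity: it works verbatim for pseudodifferential potentials $V(x,D)$ satisfying \eqref{symbol class} or \eqref{symbol class 2}, where no factorization $V=vw$ is available and your H\"older step would break down. Your approach is more elementary and stays closer to the existing literature, but is tied to multiplication operators. Your identification of the localized resolvent estimate (simultaneous Stein--Tomas and off-diagonal decay at the $|\im z|^{-1}$ scale) as the crux of (iii) is accurate; in the paper this is isolated as Lemma~\ref{lemma DN}, proved by stationary phase after a Malgrange-type factorization.
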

The estimate in (iii) corresponds to \eqref{DN higher dim}. Using explicit formulas for the resolvent kernel of the fractional Laplacian in terms of special functions one could probably replace the rapid decay by an exponential one. However, our proof only uses stationary phase estimates and works for more general constant coefficient operators $H_0$. In practice, the difference between \eqref{DN higher dim} and (iii) is not significant; only the decay scale $|\im z|^{-1}$ is. Observe that if $s<2d/(d+1)$ (this condition appears in \cite{MR3608659,MR3739326}), then one is always in the long-range case (ii) since $(d+1)/2<q_s$. The proof of (ii), (iii) could be obtained by closely following the arguments in \cite{MR3717979} and  \cite{MR4104544}, respectively. 
However, our main point here is to show that all the statements of Theorem 4 follow from the general results of Propositions \ref{thm universal bound 1 psdo} and \ref{thm universal bound 2 psdo} below in the special case $h_0(\xi)=|\xi|^s$. 

As a further consequence of our counterexample to the Laptev--Safronov conjecture, one can modify the construction of \cite[Th.~1]{MR3627408}, valid for $q>d$, to $q>(d+1)/2$.
 Here, $\sigma_{\rm p}(H_V)$ denotes the set of eigenvalues.
\begin{theorem}\label{thm accumulation}
Let $d\geq 2$, $q>(d+1)/2$ and $\epsilon>0$. 
Then there exists $ V\in L^{\infty}(\R^d)\cap L^q(\R^d)$ with $\max\{\|V\|_{\infty},\|V\|_q\}\leq \epsilon$ such that $\sigma_p(H_V)\backslash\R_+$ accumulates at every point in $\R_+$.
\end{theorem}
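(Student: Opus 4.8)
The plan is to build $V$ as a superposition of countably many rescaled and widely separated copies of the counterexample potentials $V_\eps$ from Theorem~\ref{thm LS}, following the scheme of \cite{MR3627408}; the only genuinely new input is the building block $V_\eps$, which---in contrast to the one used in \cite{MR3627408}---already has small $L^q$-norm for $q>(d+1)/2$. Fix a sequence $(\lambda_k)\subset(0,\infty)$ meeting every nonempty open subset of $(0,\infty)$ infinitely often. Since the scaling $W\mapsto\mu^2W(\mu\,\cdot)$ turns an eigenvalue $z$ of $H_W$ into the eigenvalue $\mu^2z$ of the scaled operator, the choice $\mu_k:=\sqrt{\lambda_k}$, $W_k:=\lambda_kV_{\eps_k}(\sqrt{\lambda_k}\,\cdot)$ makes $\zeta_k:=\lambda_k(1+\I\eps_k)$ an eigenvalue of $H_{W_k}$, and $\zeta_k\to\lambda_k$ as $\eps_k\to0$. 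Because $\|W_k\|_\infty=\lambda_k\eps_k$ and $\|W_k\|_q^q=\lambda_k^{q-d/2}\|V_{\eps_k}\|_q^q\lesssim\lambda_k^{q-d/2}\,\eps_k^{\,q-(d+1)/2}$ with $q-(d+1)/2>0$, both norms are below $2^{-k}\epsilon$ and $2^{-k}\epsilon^q$ respectively once $\eps_k$ is small enough; translating the (compact, hence pairwise disjoint) supports far apart, $V:=\sum_kW_k(\cdot-y_k)$ then lies in $L^\infty\cap L^q$ with $\max\{\|V\|_\infty,\|V\|_q\}\le\epsilon$. This is the step that uses $q>(d+1)/2$.

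The core is to show that $H_V$ keeps an eigenvalue near each $\zeta_k$. I would work with the Birman--Schwinger operator $K_W(z)=w_1R_0(z)w_2$, $W=w_1w_2$, $|w_j|=|W|^{1/2}$, $R_0(z)=(-\Delta-z)^{-1}$: for $z\notin[0,\infty)$ it belongs to the Schatten class $\mathfrak S_r$ for every integer $r>\max(d/2,q)$, and $z\mapsto\det_r(1+K_W(z))$ is analytic with zero set $\sigma_{\mathrm p}(H_W)\setminus[0,\infty)$, counted with algebraic multiplicity. The mechanism driving the whole argument is that whenever $W=W'+W''$ with $\dist(\supp W',\supp W'')$ large, one has, in the block decomposition of $L^2$ along these supports, $K_W(z)=K_{W'}(z)\oplus K_{W''}(z)+E(z)$ with $\|E(z)\|_{\mathfrak S_2}\le C(z)\,e^{-c(z)\,\dist(\supp W',\supp W'')}$, where $c(z)>0$ is bounded below on compact subsets of $\C\setminus[0,\infty)$ (exponential decay of the free resolvent kernel off the spectrum). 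Sandwiching $R_0(z)$ by $|W|^{1/2}$ is precisely what converts spatial separation into operator smallness, whereas $\|R_0(z)\|$ itself blows up near $[0,\infty)$.

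The parameters are then chosen by induction. Fix circles $\gamma_k$ around $\zeta_k$ of radius $<\tfrac12\im\zeta_k$, so $\gamma_k$ stays at distance $\delta_k>0$ from $[0,\infty)$. At stage $k$ one first takes $\eps_k$ small enough that a small closed disc about $\zeta_k$ misses $\sigma_{\mathrm p}(H_{V_{\le k-1}})$, that $\det_r(1+K_{W_k}(\cdot))$ has no zero on $\gamma_1,\dots,\gamma_{k-1}$---possible because $H_{W_k}\to-\Delta$ in norm resolvent sense as $\eps_k\to0$, so $\sigma_{\mathrm p}(H_{W_k})$ is squeezed toward $[0,\infty)$---and that several further smallness bounds hold, phrased through the already fixed $\gamma_l,\delta_l$ ($l<k$), which guarantee that the tail $V_{>k}$ of the eventual potential will be small enough for $|\det_r(1+K_{V_{>k}}(\cdot))|\ge\tfrac12$ on each earlier $\gamma_l$; one then places $y_k$ far enough out that the remainder $E$ from the previous paragraph is, on every one of $\gamma_1,\dots,\gamma_k$, below the relevant determinant lower bounds. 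An argument-principle comparison then propagates the assertion that $\det_r(1+K_{V_{\le k}}(\cdot))$ has a zero inside each $\gamma_l$ ($l\le k$); and comparing $\det_r(1+K_V)$ with $\det_r(1+K_{V_{\le k}})\det_r(1+K_{V_{>k}})$ on $\gamma_k$---their difference being summably small there---produces an eigenvalue $w_k$ of $H_V$ inside $\gamma_k$. Since then $|w_k-\lambda_k|\to0$ while $\im w_k>\tfrac12\im\zeta_k>0$, the set $\{w_k\}\subset\sigma_{\mathrm p}(H_V)\setminus\R_+$ accumulates at every point of $\R_+$.

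The hard part is the bookkeeping of this induction: every smallness requirement---on potential norms, on the off-diagonal Birman--Schwinger remainders over the growing family of contours $\gamma_l$, and on the aggregate effect of the infinitely many not-yet-placed bumps---has to be satisfied by a single construction, which fixes the order of the choices ($\eps_k$, then $\gamma_k$ and $\delta_k$, then $y_k$) and forces every later stage to overshoot constraints inherited from the earlier contours. What makes it possible is that each $\zeta_k$ has strictly positive, though tiny, imaginary part, so the relevant contour is bounded away from the essential spectrum and the free resolvent decays exponentially there---this is exactly the structure already exploited in \cite{MR3627408}, and the sole new ingredient is the building block supplied by Theorem~\ref{thm LS}.
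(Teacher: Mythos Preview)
Your proposal is correct and follows essentially the same route as the paper: both reduce Theorem~\ref{thm accumulation} to the construction of \cite{MR3627408}, with the single new ingredient being the building block supplied by the counterexample potentials $V_\eps$ (packaged in the paper as Lemma~\ref{lem accum}). Your sketch spells out in more detail the Birman--Schwinger/regularized-determinant mechanism underlying \cite{MR3627408}, whereas the paper simply invokes that reference; but the strategy, the scaling to arbitrary $\lambda$, and the crucial use of $q>(d+1)/2$ to force $\|V_\eps\|_q\to 0$ are identical.
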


To conclude the introduction we give some comments on the idea behind the counterexample to the Laptev--Safronov conjecture.
A key difference to the constructions in \cite{MR3713021,MR2024415,MR4107520} (for embedded eigenvalues) is that the potential here is not explicit, but depends on an (unknown) eigenfunction of a compact operator $K$ (see the proof of Lemma \ref{lemma soft 1} for details). 
In \cite{MR3713021,MR2024415,MR4107520} one starts with a putative eigenfunction of $H_V$ and determines $V$ from the eigenvalue equation. The strategy we adopt here more closely follows the standard approach to prove upper bounds, the so-called Birman--Schwinger principle \cite{MR0142896}, \cite{Schwinger122}.
In its simplest form, this principle says that $z$ is an eigenvalue of $H_V$ if and only if $-1$ is an eigenvalue of the compact operator $\sqrt{|V|}(H_0-z)^{-1}\sqrt{V}$. A simplified sketch of the proof of \eqref{LT bound} for $d\geq 2$ and $\gamma\leq 1/2$ then goes as follows,
\begin{align}\label{BS principle}
1\leq \|\sqrt{|V|}(H_0-z)^{-1}\sqrt{V}\|\leq \|V\|_q\|(H_0-z)^{-1}\|_{p\to p'}\leq (D_{\gamma,d}|z|^{-\gamma})^{1/q}\|V\|_q,
\end{align}
where $p^{-1}+(p')^{-1}=q^{-1}$ and we recall that $q=\gamma+d/2$. The second inequality above is simply Hölder's inequality, while the last inequality follows from the work of Kenig--Ruiz--Sogge \cite{MR894584} (and is due to Frank \cite{MR2820160} in its rescaled version). This inequality is closely related to the Stein--Tomas theorem for the Fourier restriction operator $F_Sf:=\hat{f} \restrict S$, where $S=\sqrt{\lambda}\mathbb{S}^{d-1}$ if $z=\lambda+\I\epsilon$. The Knapp example shows that $p=2(d+1)/(d+3)$ (corresponding to $q=(d+1)/2$) is the best (largest) possible exponent in the inequality $\|\hat{f}\|_{L^2(\mathbb{S}^{d-1})}\leq C_p\|f\|_{L^p(\R^d)}$. The same is true for the $p\to p'$ resolvent estimate since 
\begin{align}\label{im part resolvent}
\im (H_0-(\lambda+\I\epsilon))^{-1}=-\frac{\epsilon}{(H_0-\lambda)^2+\epsilon^2}
\end{align}
and the right hand side converges to a constant times $F_S^*F_S$, as $\epsilon\to 0$. Similarly to the previous argument, the proof of \eqref{Frank bound long range} for $q>(d+1)/2$ ($\gamma>1/2$) in \cite{MR3717979} uses the non-uniform bound 
\begin{align*}
\|(H_0-z)^{-1}\|_{p\to p'}\lesssim \dist(z,\R_+)^{\frac{d+1}{2q}-1} \quad (|z|=1),
\end{align*}
which is also sharp \cite[Prop. 1.3]{MR4076079}. In our construction the potential $V$ is adapted to the Knapp example, making the second (Hölder) and third inequality in \eqref{BS principle} optimal simultanously. The only possible loss of optimality thus comes from the first inequality, and this may happen if the spectral radius of $\sqrt{|V|}(H_0-z)^{-1}\sqrt{V}$ is much smaller than its norm. We avoid this problem by working with \eqref{im part resolvent} instead of the full resolvent. It turns out that one can redefine $V$ (without making it larger in $L^q$ norm) in such a way that $z$ becomes an eigenvalue of $H_V$.

\medskip
\noindent
\textbf{Organization of the paper.}
In Section 2 we construct the counterexample to the Laptev--Safronov conjecture and prove Theorems 1--3 and 5. In Section 3 we give an alternative (non compactly supported) counterexample that is a perturbation of the Frank--Simon example for embedded eigenvalues. In Section 4 we prove an almost sharp quantitative lower bound on the norm of the compact operator $K$ (which implies an upper bound on the potential) and generalize the counterexample to generalized Schrödinger operators of the form \eqref{gen. Schroedinger op.}. Corresponding upper bounds for such operators (in particular, a proof of Theorem 4) are collected in Section 5.

\medskip
\noindent
\textbf{Notation.} For $a,b\geq 0$
the statement $a\lesssim b$ means that $a\leq C b$ for some universal
constant $C$. The expression $a\asymp b$ means $a\lesssim b$ and $b\lesssim a$.
 If the estimate depends on a parameter $\tau$, we indicate
this by writing $a\lesssim_{\tau} b$. In particular, if $\tau=N$, we always mean that the estimate is true for any natural number $N$, with a constant depending on $N$. The expression $a\lesssim b^{\kappa+}$ ($\kappa\in \R$) means $a\lesssim_{\delta}b^{\kappa+\delta}$ for any $\delta>0$, and similarly for $\kappa-$.
The dependence on the dimension and on other fixed quantities is always suppressed. An assumption $a\ll b$ means that there is a small constant $c$ such that if $a\leq cb$, then the ensuing conclusion holds. We also use $c$ as a generic positive constant in estimates involving exponentials, as in \eqref{DN higher dim}. The big oh notation $a=\mathcal{O}(b)$ means $|a|\lesssim b$ (here we are not assuming $a\geq 0$). For an integral operator $K$ on $\R^d$ we denote by $\|K\|_{r\to s}$ its $L^r\to L^s$ operator norm. If $r,s=2$, then we just write $\|K\|$. Similarly, we denote the $L^r$ norm of a function $f$ by $\|f\|_r$ and write $\|f\|$ if $r=2$. 
We denote by $\sigma(T)=\{z\in\C:\,T-z \text{ not boundedly invertible}\}$ the spectrum of a linear operator $T$. 
We write $\langle x\rangle=(1+x^2)^{1/2}$, where $x^2=x\cdot x$ for $x\in\R^d$. If nothing else is indicated, integrals are always understood to be over $\R^d$. The indicator function of a set $A$ is denoted by $\mathbf{1}_A$.
If we speak of a bump function we mean a smooth, compactly supported, real-valued function with values in $[0,1]$.

\section{Counterexample to the conjecture}

The counterexample to the Laptev--Safronov conjecture is based on Lemmas \ref{lemma soft 1} and \ref{lemma soft 2} below.
In the following we use the notation
$$\delta_{\lambda,\epsilon}(H_0):=\frac{\epsilon}{(H_0-\lambda)^2+\epsilon^2},$$ where $\lambda\in\R$ and $\epsilon>0$. We abbreviate this by $\delta_{\epsilon}(H_0)$ if $\lambda=1$.

Let $H_0=h_0(x,D)$ be a self-adjoint, elliptic pseudodifferential operator on $L^2(\R^d)$ with domain
\begin{align*}
    \dom{(H_0)}:=\set{u\in L^2(\R^d)}{H_0 u\in L^2(\R^d)}.
\end{align*}
Here $h(x,D)$ is the Kohn--Nirenberg quantization of a symbol $h\in S_{\rho,\delta}^m$ (the standard Hörmander classes, see e.g.\ \cite{MR2743652}) where $0\leq \delta<\rho\leq 1$ and $m>0$. Hence,
\begin{align*}
h_0(x,D)u(x)=(2\pi)^{-d}\int_{\R^d}\e^{\I x\cdot\xi}h(x,\xi)\hat{u}(\xi)\rd\xi,
\end{align*}
where $\hat{u}$ is the Fourier transform of a Schwartz function $u$. We will write $H_0\in OPS_{\rho,\delta}^m$. 
We assume that $H_0$ is elliptic, i.e.\ $|h_0(x,\xi)|\gtrsim |\xi|^m$ for $|\xi|\geq C$. By \cite[Prop. 5.5]{MR2743652}, we have $\dom{(H_0)}=H^{m}(\R^d)$. We also assume that $H_0$ is \emph{real}, i.e.\ commutes with complex conjugation. On the symbol level, this means that $\overline{h_0(x,\xi)}=h_0(x,-\xi)$. Note that this is the case for the Laplacian, for which $h_0(\xi)=\xi^2$. 

Let $U\subset\R^d$ be a nonempty open, precompact set,
and let $\chi=\mathbf{1}_{U}$ be its indicator function. In the following we will consider the operator $K:=\chi\delta_{\lambda,\epsilon}(H_0)\chi$.

\begin{lemma}
$K$ is compact.
\end{lemma}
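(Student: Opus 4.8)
The plan is to show that $K=\chi\delta_{\lambda,\epsilon}(H_0)\chi$ factors through a Sobolev embedding that is compact on the precompact set $U$. Since $H_0\in OPS^m_{\rho,\delta}$ is self-adjoint, the spectral calculus gives that $\delta_{\lambda,\epsilon}(H_0)=\epsilon((H_0-\lambda)^2+\epsilon^2)^{-1}$ is a bounded operator on $L^2(\R^d)$; in fact it maps $L^2(\R^d)$ into $\dom(H_0^2)\subset\dom(H_0)=H^m(\R^d)$, because on the spectrum of $H_0$ the function $t\mapsto (1+t^2)\epsilon((t-\lambda)^2+\epsilon^2)^{-1}$ is bounded (it decays like $\epsilon t^{-2}$ at infinity), so $\langle H_0\rangle\,\delta_{\lambda,\epsilon}(H_0)$ is bounded on $L^2$. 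Hence $\delta_{\lambda,\epsilon}(H_0)\colon L^2(\R^d)\to H^m(\R^d)$ is bounded, and even $\delta_{\lambda,\epsilon}(H_0)\colon L^2\to H^{2m}$ is bounded by the same reasoning applied to $\langle H_0\rangle^2$.

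Next I would use that multiplication by $\chi=\mathbf 1_U$, with $U$ precompact, is a compact operator from $H^s(\R^d)$ to $L^2(\R^d)$ for any $s>0$: this is the Rellich–Kondrachov theorem, since $\chi$ restricts to a bounded (in fact compactly supported) region and $H^s_{\mathrm{loc}}\hookrightarrow L^2_{\mathrm{loc}}$ compactly. Writing $K=\chi\circ\big(\delta_{\lambda,\epsilon}(H_0)\chi\big)$ and noting that $\delta_{\lambda,\epsilon}(H_0)\chi\colon L^2(\R^d)\to H^m(\R^d)$ is bounded (it is $\delta_{\lambda,\epsilon}(H_0)$ post-composed with the bounded operator of multiplication by $\chi$ on $L^2$), we obtain $K$ as the composition of a bounded operator $L^2\to H^m$ with the compact operator $\chi\colon H^m\to L^2$. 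Hence $K$ is compact.

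The only point requiring a little care — the main obstacle, such as it is — is the claim $\delta_{\lambda,\epsilon}(H_0)\colon L^2\to H^m$ boundedly, i.e. that applying $\langle H_0\rangle$ does not destroy $L^2$-boundedness. This is where ellipticity of $H_0$ enters: by \cite[Prop. 5.5]{MR2743652} one has $\dom(H_0)=H^m(\R^d)$ with equivalence of norms $\|u\|_{H^m}\asymp\|u\|_{L^2}+\|H_0 u\|_{L^2}\asymp\|\langle H_0\rangle u\|_{L^2}$ (the last step using that $H_0$ is self-adjoint, so $\langle H_0\rangle$ is defined by the functional calculus). Thus it suffices to bound $\|\langle H_0\rangle\,\delta_{\lambda,\epsilon}(H_0)\|_{L^2\to L^2}$, which by the spectral theorem equals $\sup_{t\in\sigma(H_0)}(1+t^2)^{1/2}\,\epsilon\big((t-\lambda)^2+\epsilon^2\big)^{-1}<\infty$; the supremum is finite because the function is continuous and tends to $0$ as $|t|\to\infty$. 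Alternatively, if one prefers to avoid the functional-calculus bookkeeping, one can simply note $\delta_{\lambda,\epsilon}(H_0)=\tfrac{1}{2\mathrm i}\big((H_0-\lambda-\mathrm i\epsilon)^{-1}-(H_0-\lambda+\mathrm i\epsilon)^{-1}\big)$ and that each resolvent $(H_0-z)^{-1}$ maps $L^2$ boundedly into $\dom(H_0)=H^m$, by definition of the domain; the rest of the argument is unchanged.
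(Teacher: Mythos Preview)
Your proof is correct and rests on the same core idea as the paper's---$\delta_{\lambda,\epsilon}(H_0)$ gains Sobolev regularity and the cutoff $\chi$ then provides compact localization---but your implementation is more elementary. The paper factors $K=\chi\Lambda^{-m}\bigl(\Lambda^{m}\delta_{\lambda,\epsilon}(H_0)\Lambda^{m}\bigr)\Lambda^{-m}\chi$ with $\Lambda=(1-\Delta)^{1/2}$, invokes Beal's theorem to show $\delta_{\lambda,\epsilon}(H_0)\in OPS^{-2m}_{\rho,\delta}$ (so that the middle factor is $L^2$-bounded), and then uses the Kato--Seiler--Simon inequality to put $\chi\Lambda^{-m}$ into a Schatten class $\mathfrak{S}^p$. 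You bypass both heavy tools: the resolvent identity $\delta_{\lambda,\epsilon}(H_0)=\tfrac{1}{2\I}\bigl((H_0-z)^{-1}-(H_0-\overline{z})^{-1}\bigr)$ already gives $\delta_{\lambda,\epsilon}(H_0):L^2\to\dom(H_0)=H^m$ boundedly, and Rellich--Kondrachov suffices for the compactness of $\chi:H^m\to L^2$. What the paper's route buys is a quantitative Schatten-class statement (not actually used later), at the price of citing Beal's characterization of pseudodifferential operators; your route needs only the spectral theorem and the domain identification $\dom(H_0)=H^m$ already quoted in the setup.
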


\begin{proof}
Let $\Lambda=(1-\Delta)^{1/2}$ and write 
\begin{align*}
K=\chi\Lambda^{-m}(\Lambda^{m}\delta_{\lambda,\epsilon}(H_0)\Lambda^{m})\Lambda^{-m}\chi.
\end{align*}
By the Kato--Seiler--Simon inequality \cite[Theorem 4.1]{MR2154153}, $\chi\Lambda^{-m}$ is in the Schatten class $\mathfrak{S}^p$ for any $p>d/m$ (and $p\geq 2$). In particular, it is compact, and so is its adjoint $\Lambda^{-m}\chi$. It remains to show that $\Lambda^{m}\delta_{\lambda,\epsilon}(H_0)\Lambda^{m}$ is $L^2$ bounded. By Beal's theorem \cite[Theorem 3.2]{MR435933} it follows that $\delta_{\lambda,\epsilon}(H_0)\in OPS_{\rho,\delta}^{-2m}$, and by the $L^2$ boundedness of zero order pseudodifferential operators (see e.g.\ \cite[Theorem 5.3]{MR2743652} for the symbol classes considered here), $\Lambda^{m}\delta_{\lambda,\epsilon}(H_0)\Lambda^{m}$ is bounded.
\end{proof}

We next state an analog of the Birman--Schwinger principle for the operator $K$. The proof is a straightforward verification. Here we do not need to assume $\chi=\mathbf{1}_{U}$.

\begin{lemma}\label{bs principle}
Let $\mu\in\C\setminus \{0\}$. Then $\mu$ is an eigenvalue of $K$ if and only if the operator $(H_0-\lambda)^2+\epsilon^2-(\epsilon/\mu)\chi^2$ has nontrivial kernel. Moreover,
\begin{align*}
    \delta_{\lambda,\epsilon}(H_0)\chi:\ker(K-\mu)\to \ker((H_0-\lambda)^2+\epsilon^2-(\epsilon/\mu)\chi^2)
\end{align*}
is a linear isomorphism with inverse $\mu^{-1}\chi$.
\end{lemma}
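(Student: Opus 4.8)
The plan is to argue by a direct computation, translating the eigenvalue equation $K\psi=\mu\psi$ into a statement about the kernel of the operator $P_\mu:=(H_0-\lambda)^2+\epsilon^2-(\epsilon/\mu)\chi^2$. First I would unwind the definition: $K\psi=\mu\psi$ means $\chi\,\delta_{\lambda,\epsilon}(H_0)\,\chi\psi=\mu\psi$, where $\delta_{\lambda,\epsilon}(H_0)=\epsilon\bigl((H_0-\lambda)^2+\epsilon^2\bigr)^{-1}$ is bounded and self-adjoint (by the spectral theorem, since $H_0$ is self-adjoint). Observe first that any $\psi\in\ker(K-\mu)$ with $\mu\neq 0$ satisfies $\psi=\mu^{-1}\chi\,\delta_{\lambda,\epsilon}(H_0)\chi\psi$, so $\psi$ is supported in $U$ and in particular $\chi\psi=\psi$ (here $\chi=\mathbf 1_U$, $\chi^2=\chi$; for the general $\chi$ one keeps $\chi^2$ throughout, which is what the statement does). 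Set $\phi:=\delta_{\lambda,\epsilon}(H_0)\chi\psi$; then $\psi=\mu^{-1}\chi\phi$, and $\phi$ lies in the range of $\delta_{\lambda,\epsilon}(H_0)$, hence in $\dom\bigl((H_0-\lambda)^2\bigr)=H^{2m}(\R^d)$, so applying $(H_0-\lambda)^2+\epsilon^2$ is legitimate and gives $\bigl((H_0-\lambda)^2+\epsilon^2\bigr)\phi=\epsilon\chi\psi=(\epsilon/\mu)\chi^2\phi$, i.e.\ $P_\mu\phi=0$. Moreover $\phi\neq 0$ since $\psi=\mu^{-1}\chi\phi\neq 0$.

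Conversely, suppose $P_\mu\phi=0$ for some $0\neq\phi\in H^{2m}$. Apply $\delta_{\lambda,\epsilon}(H_0)=\epsilon\bigl((H_0-\lambda)^2+\epsilon^2\bigr)^{-1}$ to the identity $\bigl((H_0-\lambda)^2+\epsilon^2\bigr)\phi=(\epsilon/\mu)\chi^2\phi$ to get $\epsilon\phi=(\epsilon/\mu)\delta_{\lambda,\epsilon}(H_0)\chi^2\phi$, i.e.\ $\phi=\mu^{-1}\delta_{\lambda,\epsilon}(H_0)\chi^2\phi$. Set $\psi:=\mu^{-1}\chi\phi$. Multiplying the last identity by $\mu^{-1}\chi$ yields $\psi=\mu^{-1}\chi\bigl(\mu^{-1}\delta_{\lambda,\epsilon}(H_0)\chi^2\phi\bigr)=\mu^{-1}\chi\,\delta_{\lambda,\epsilon}(H_0)\chi(\mu^{-1}\chi\phi)=\mu^{-1}K\psi$, so $K\psi=\mu\psi$. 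To see $\psi\neq 0$: if $\chi\phi=0$ then $\chi^2\phi=0$, whence $\bigl((H_0-\lambda)^2+\epsilon^2\bigr)\phi=0$, which forces $\phi=0$ by injectivity of $(H_0-\lambda)^2+\epsilon^2$ (its spectrum is $\geq\epsilon^2>0$), a contradiction; so $\psi\neq 0$. This establishes the equivalence and simultaneously shows that $\phi\mapsto\mu^{-1}\chi\phi$ maps $\ker P_\mu$ into $\ker(K-\mu)$ and $\psi\mapsto\delta_{\lambda,\epsilon}(H_0)\chi\psi$ maps $\ker(K-\mu)$ into $\ker P_\mu$; the computations above show these are mutually inverse, hence linear isomorphisms.

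There is essentially no hard part here — as the authors note, it is a straightforward verification. The only points requiring a little care are bookkeeping ones: (i) justifying that elements of $\ker(K-\mu)$ and of $\operatorname{Ran}\delta_{\lambda,\epsilon}(H_0)$ lie in the domain $H^{2m}$ where $(H_0-\lambda)^2$ acts classically, so that the algebraic manipulations with $\bigl((H_0-\lambda)^2+\epsilon^2\bigr)$ and its bounded inverse are valid; (ii) keeping track of $\chi$ versus $\chi^2$, since the lemma does not assume $\chi=\mathbf 1_U$ (when $\chi^2=\chi$ the two agree, but the general statement needs $\chi^2$); and (iii) checking the two maps are genuinely inverse to each other, which is immediate from $\psi=\mu^{-1}\chi\phi$ and $\phi=\delta_{\lambda,\epsilon}(H_0)\chi\psi$ together with the identities derived above. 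I would write the argument in the $\chi^2$-form throughout so that it applies verbatim in the generality stated.
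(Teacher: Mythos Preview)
Your proposal is correct and is precisely the straightforward verification the paper alludes to (the paper itself gives no further details beyond stating that the proof is a straightforward verification). Your handling of the domain issue, the $\chi$ versus $\chi^2$ bookkeeping, and the mutual invertibility of the two maps is exactly what is needed.
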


The following lemma is a standard elliptic regularity result, but we provide a proof for completeness. 
\begin{lemma}\label{lemma elliptic regularity}
Let $\mu\in\C\setminus \{0\}$. Then $\ker(K-\mu)\subset C^{\infty}(U)$. 
\end{lemma}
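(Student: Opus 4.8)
The plan is to use a bootstrapping argument based on elliptic regularity for the operator $H_0$, together with the Birman--Schwinger-type identity from Lemma \ref{bs principle}. Fix $\mu\in\C\setminus\{0\}$ and let $u\in\ker(K-\mu)$. By Lemma \ref{bs principle}, setting $v=\delta_{\lambda,\epsilon}(H_0)\chi u$, we have that $v$ lies in the kernel of $(H_0-\lambda)^2+\epsilon^2-(\epsilon/\mu)\chi^2$, i.e.
\begin{align*}
\bigl((H_0-\lambda)^2+\epsilon^2\bigr)v=(\epsilon/\mu)\chi^2 v=(\epsilon/\mu)\chi^2 v.
\end{align*}
Wait --- let me instead work directly with $u$ and the structure $u=\mu^{-1}\chi v$ from the stated isomorphism, so that $u$ is supported in $\overline U$. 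The operator $P:=(H_0-\lambda)^2+\epsilon^2$ is an elliptic pseudodifferential operator of order $2m$ in $OPS_{\rho,\delta}^{2m}$, and $\epsilon^2>0$ ensures its symbol is bounded below, so it has a parametrix $Q\in OPS_{\rho,\delta}^{-2m}$ with $QP=I+R$, $R$ smoothing. On the open set $U$, the equation reads $Pv = (\epsilon/\mu)\chi^2 v$, and since $\chi\equiv 1$ on $U$, this is $Pv=(\epsilon/\mu)v$ on $U$ --- a homogeneous elliptic equation there.

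The key steps are then as follows. First I would record that $v\in L^2$ (it is in the range of a bounded operator applied to $\chi u\in L^2$), hence $v\in H^0_{\loc}(U)$. Next, the standard interior elliptic regularity estimate for the elliptic operator $P$ of order $2m$: for any $\phi,\psi\in C_c^\infty(U)$ with $\psi\equiv 1$ on $\supp\phi$, one has $\|\phi v\|_{H^{s+2m}}\lesssim \|\psi Pv\|_{H^s}+\|\psi v\|_{H^s}$ (this follows from the parametrix construction and pseudolocality, cf.\ the Beals-theorem input already used in the proof that $K$ is compact). Applying this with $Pv=(\epsilon/\mu)v$ on $U$ gives $\phi v\in H^{2m}_{\loc}(U)$ whenever $v\in L^2_{\loc}(U)$; iterating, $\phi v\in H^{2km}_{\loc}(U)$ for all $k$, so $v\in C^\infty(U)$ by Sobolev embedding. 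Finally, since $u=\mu^{-1}\chi v$ and $\chi=\mathbf 1_U\equiv 1$ on $U$, we get $u\in C^\infty(U)$ as claimed. (Strictly, I should also note $u$ vanishes outside $\overline U$, but the statement only asserts smoothness on the open set $U$, so this is not needed.)

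The main obstacle is purely bookkeeping: the operators here lie in the general Hörmander classes $S_{\rho,\delta}^m$ rather than the classical class, so one must be slightly careful that the parametrix and the interior regularity estimate are still available --- but both are standard for $0\le\delta<\rho\le1$ (microlocal elliptic regularity holds in this generality), and the paper has already invoked Beals' theorem and the calculus for exactly these classes. A second minor point is to make sure one can truly localize: the equation $(H_0-\lambda)^2 v + \epsilon^2 v = (\epsilon/\mu)\chi^2 v$ holds globally on $\R^d$, but $\chi^2=\mathbf 1_U$ is merely bounded (not smooth), so one cannot treat the right-hand side as a smooth coefficient globally; however, \emph{on $U$} one has $\chi^2\equiv1$, so inside $U$ the coefficient is constant and the interior estimates apply on compactly contained subsets. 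That localization is exactly why the conclusion is $C^\infty(U)$ and not global smoothness. I expect the write-up to be short, essentially: quote the parametrix, quote the interior elliptic estimate, bootstrap, invoke Sobolev embedding.
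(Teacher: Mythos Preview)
Your proposal is correct and follows the same overall strategy as the paper: pass via Lemma~\ref{bs principle} to the function $v$ solving $((H_0-\lambda)^2+\epsilon^2)v=(\epsilon/\mu)\chi^2 v$, observe that on $U$ this is an elliptic equation with smooth (indeed constant) right-hand side coefficient, and bootstrap using a parametrix and localization to conclude $v\in C^\infty(U)$, hence $u=\mu^{-1}\chi v\in C^\infty(U)$.

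The only organizational difference is in the bootstrap bookkeeping. The paper carries out the iteration by hand with nested cutoffs $\chi_j$ and the commutator calculus in $OPS^{2m}_{\rho,\delta}$, gaining $\rho-\delta$ in Sobolev regularity per step via $[P,\chi_j]\in OPS^{2m-(\rho-\delta)}_{\rho,\delta}$. You instead invoke the interior elliptic regularity estimate as a black box, gaining the full order $2m$ per step. Your route is slightly quicker but relies on one extra standard fact that you only name implicitly: that $\phi Q(1-\psi)$ is a smoothing operator when $\phi,\psi$ have the stated nesting (because the kernel of $Q\in OPS^{-2m}_{\rho,\delta}$ is smooth off the diagonal). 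Strictly speaking, your displayed inequality should carry a harmless additional global term such as $\|v\|_{L^2}$ on the right to absorb these off-diagonal contributions; since $v\in L^2$ this does not affect the iteration. With that caveat, both arguments are equivalent in substance.
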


\begin{proof}
By Lemma \ref{bs principle} it suffices to prove that if $u\in \dom{(H_0^2)}$,
\begin{align*}
    (H_0-\lambda)^2u+\epsilon^2u-(\epsilon/\mu)\chi^2u=0,
\end{align*}
then $u\in C^{\infty}(U)$. It is clear that the above equation takes the form $Pu=f$ with $P\in OPS^{2m}_{\rho,\delta}$ elliptic and $u,f\in L^2(\R^d)$. Let $Q \in OPS^{2m}_{\rho,\delta}$ be a parametrix for $P$ (see e.g.\ \cite[Ch. 1, Sect. 4]{MR2743652}). Then, modulo smooth functions, we have $QPu=u$ and hence $u=Qf\in H^{2m}(\R^d)$ by \cite[Prop. 5.5]{MR2743652}. To bootstrap this, we localize near a point $x_0\in U$ and let $\chi_j$ be a sequence of bump functions in $U$ such that $\chi_j=1$ near $x_0$ and $\chi_j\chi_{j-1}=\chi_j$. Then, again modulo smooth functions, $u_j=\chi_ju$ satisfies $P u_j=f_j$ with $f_j=[P,\chi_j]u=[P,\chi_j]u_{j-1}\in H^{j(\rho-\delta)}(\R^d)$ (again by \cite[Prop. 5.5]{MR2743652} and since the commutator reduces the order by $\rho-\delta$, see \cite[(3.24)]{MR2743652}). Applying the previous elliptic regularity estimate successively yields $u_j\in H^{2m+j(\rho-\delta)}(\R^d)$. By Sobolev embedding, $u_j\in C^{k}(\R^d)$ for $2m+j(\rho-\delta)>d/2+k$. This shows that $u$ is smooth at $x_0$.
\end{proof}

\begin{lemma}\label{lemma soft 1}
There exists $V\in L^{\infty}(\R^d)$ such that $z=\lambda+\I\epsilon$ is an eigenvalue of $H_V$ and $|V|\leq \|K\|^{-1}\chi$. 
\end{lemma}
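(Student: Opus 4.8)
The plan is to apply the spectral theory of the compact self-adjoint-like operator $K=\chi\delta_{\lambda,\epsilon}(H_0)\chi$ together with the Birman--Schwinger correspondence of Lemma \ref{bs principle}. First I would observe that $\delta_{\lambda,\epsilon}(H_0)$ is a bounded, nonnegative self-adjoint operator (it is $\epsilon\big((H_0-\lambda)^2+\epsilon^2\big)^{-1}$ with $H_0$ self-adjoint), so $K=\chi\,\delta_{\lambda,\epsilon}(H_0)\,\chi$ is compact (Lemma~1), self-adjoint and nonnegative. Its operator norm equals its largest eigenvalue $\mu_0=\|K\|>0$ (it is nonzero because $\delta_{\lambda,\epsilon}(H_0)>0$ and $\chi=\mathbf 1_U$ is nontrivial). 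Pick a normalized eigenfunction $\psi\in\ker(K-\mu_0)$; by Lemma \ref{lemma elliptic regularity}, $\psi\in C^\infty(U)$, and in particular $\psi$ is bounded on $U$.

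Next I would run the Birman--Schwinger principle of Lemma \ref{bs principle} with $\mu=\mu_0$: the function $u:=\delta_{\lambda,\epsilon}(H_0)\chi\psi$ lies in $\ker\big((H_0-\lambda)^2+\epsilon^2-(\epsilon/\mu_0)\chi^2\big)$, i.e.
\begin{align*}
(H_0-\lambda)^2 u+\epsilon^2 u=\frac{\epsilon}{\mu_0}\chi^2 u .
\end{align*}
The idea is now to factor the left-hand side as $(H_0-z)(H_0-\bar z)u$ with $z=\lambda+\I\epsilon$ (using $(H_0-\lambda)^2+\epsilon^2=(H_0-\lambda-\I\epsilon)(H_0-\lambda+\I\epsilon)$ and that these two factors commute), and to define the potential
\begin{align*}
V:=\frac{\epsilon}{\mu_0}\,\chi^2\,\frac{u}{w},\qquad w:=(H_0-\bar z)u ,
\end{align*}
so that the eigenvalue equation rewrites as $(H_0-z)w=Vw$, i.e. $H_V w=z w$. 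One checks $w=(H_0-\bar z)\delta_{\lambda,\epsilon}(H_0)\chi\psi$; using the identity $(H_0-\bar z)\delta_{\lambda,\epsilon}(H_0)=\big(\I/2\big)\big((H_0-z)^{-1}-(H_0-\bar z)^{-1}\big)\cdot(\text{const})$—more simply, $\delta_{\lambda,\epsilon}(H_0)=\tfrac1{2\I}\big((H_0-\bar z)^{-1}-(H_0-z)^{-1}\big)$—one can get a usable formula for $w$ and verify $w\not\equiv 0$ (otherwise $u=\mu_0^{-1}\chi\psi$ would force $\chi\psi\in\ker((H_0-\lambda)^2+\epsilon^2)=\{0\}$, contradicting $\|\psi\|=1$ and $\psi$ supported in $U$). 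The remaining task is the pointwise bound $|V|\le\|K\|^{-1}\chi$. On the support of $\chi$ we have $V=(\epsilon/\mu_0)\,u/w$, and the key algebraic point is that the ratio $u/w$ should simplify: since $u=\delta_{\lambda,\epsilon}(H_0)\chi\psi$ and, by Lemma \ref{bs principle}, $\mu_0^{-1}\chi u=\psi$ on $U$, while $w=(H_0-\bar z)u$, one expects a cancellation leaving $|V|\le(\epsilon/\mu_0)\cdot(\text{something}\le1)$; concretely I would aim to show $\chi^2 u = \mu_0\,\chi\psi$ and $\chi w=\chi(H_0-\bar z)u$, then bound the quotient.

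The step I expect to be the main obstacle is making the \emph{pointwise} division by $w$ rigorous and extracting the clean bound $|V|\le\|K\|^{-1}\chi$: one must ensure $w$ does not vanish on $\supp\chi=\overline U$ where $Vw$ is supported (zeros of $w$ inside $U$ could be absorbed since $u$ vanishes there too, by the factorization, but this needs care), and one must identify the correct normalization constant so that the $\epsilon/\mu_0$ prefactor, combined with the size of $u/w$ on $U$, yields exactly $\mu_0^{-1}=\|K\|^{-1}$ rather than something larger. I would handle this by exploiting the two explicit intertwining relations from Lemma \ref{bs principle}—namely that $\delta_{\lambda,\epsilon}(H_0)\chi$ and $\mu_0^{-1}\chi$ are mutually inverse between the two kernels—to rewrite $V$ purely in terms of $\psi$ and $H_0\psi$ on $U$, where smoothness from Lemma \ref{lemma elliptic regularity} guarantees all expressions are genuine bounded functions, and then read off the bound. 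The self-adjointness and nonnegativity of $\delta_{\lambda,\epsilon}(H_0)$, giving $\|K\|=\mu_0$ exactly, is what turns the generic Birman--Schwinger inequality into the sharp identity needed for the counterexample.
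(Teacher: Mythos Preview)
Your route is essentially the paper's: factor $(H_0-\lambda)^2+\epsilon^2=(H_0-z)(H_0-\bar z)$, pass from the $K$-eigenfunction via Lemma~\ref{bs principle} to an $H_V$-eigenfunction, and divide to produce $V$. But there is a genuine gap at exactly the point you flag. The ``something $\le 1$'' you hope for in $|V|\le(\epsilon/\mu_0)\cdot(\text{something})$ does \emph{not} appear without one additional choice that you never make: the $K$-eigenfunction must be taken \emph{real-valued}. This is possible because $H_0$ (hence $K$) is assumed to be a real operator, a hypothesis the paper singles out and uses precisely here. With your $\psi$ real, set $\Psi:=(H_0-z)^{-1}\chi\psi$; then
\[
w=(H_0-\bar z)\,\delta_{\lambda,\epsilon}(H_0)\chi\psi=\epsilon\Psi,\qquad
u=\delta_{\lambda,\epsilon}(H_0)\chi\psi=\tfrac{1}{2\I}(\Psi-\overline{\Psi})=\im\Psi,
\]
the last equality using reality of $\psi$ and of $H_0$ so that $(H_0-\bar z)^{-1}\chi\psi=\overline{\Psi}$. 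Hence on $U$ one gets $V=-(\epsilon/\mu_0)\,u/w=-\mu_0^{-1}\,\im\Psi/\Psi$, and the bound $|V|\le\mu_0^{-1}\chi$ is immediate from $|\im\Psi|\le|\Psi|$. Without reality of $\psi$, $u$ is \emph{not} the imaginary part of $w/\epsilon$, and there is no mechanism forcing $|u/w|\le 1/\epsilon$; your plan to ``rewrite $V$ purely in terms of $\psi$ and $H_0\psi$ on $U$'' and read off the bound does not go through.

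The same observation repairs your nodal-set argument: where $w=\epsilon\Psi$ vanishes, so does $u=\im\Psi$, so one simply sets $V:=0$ there and the eigenvalue equation still holds. Your claim that ``$u$ vanishes there too, by the factorization'' is not a consequence of the factorization $(H_0-z)(H_0-\bar z)$ alone; it is again the reality of $\psi$ that forces it. (Two minor points: your $V$ has the wrong sign for $H_V=H_0+V$ --- you need $V=-(\epsilon/\mu_0)\chi^2 u/w$ to obtain $(H_0+V-z)w=0$ --- and your $w\not\equiv 0$ argument is unnecessary, since $w=\epsilon(H_0-z)^{-1}\chi\psi$ with $\chi\psi=\psi\neq 0$ and $(H_0-z)^{-1}$ injective.)
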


\begin{proof}
Since $K$ is a nonnegative compact operator, its largest eigenvalue equals $\|K\|$. Hence, there is a nontrivial $\phi\in L^2(\R^d)$ such that $K\phi=\|K\|\phi$. Since $K$ is real we may and will assume that $\phi$ is real-valued. Using this together with the identity
\begin{align*}
\delta_{\lambda,\epsilon}(H_0)=\im (H_0-z)^{-1}=\frac{1}{2\I}((H_0-z)^{-1}-(H_0-\overline{z})^{-1}),
\end{align*}
the eigenvalue equation takes the form
\begin{align*}
(H_0-z)\psi=\|K\|^{-1}\chi\im\psi,
\end{align*}
where $\psi:=(H_0-z)^{-1}\phi$ and where we used that $\chi\phi=\phi$ since $\chi^2=\chi$. Here, $\im\psi$ denotes the imaginary part of a function, in distinction to the meaning of $\im$ above for the imaginary part of an operator. Let $\mathcal{N}:=\set{x\in U}{\psi(x)=0}$ be the nodal set of $\psi$, and set $V:=-\|K\|^{-1}\mathbf{1}_{U\setminus\mathcal{N}}\frac{\im\psi}{\psi}$. Note that the nodal set is well defined since $\phi$ is smooth in $U$, by Lemma~\ref{lemma elliptic regularity}, and hence $\psi$ is  smooth in $U$, by the pseudolocal property \cite[page 6]{MR2743652}. Here we are again using Beal's theorem to assert that $(H_0-z)^{-1}$ is a pseudodifferential operator. Then 
\begin{align*}
(H_0-z)\psi+V\psi=\|K\|^{-1}\mathbf{1}_{\mathcal{N}}\im\psi=0
\end{align*}
and $V$ satisfies the claimed bound.
\end{proof}

\begin{rem}
 In the case of the Laplacian, $H_0=-\Delta$, the set $U\setminus \mathcal{N}$ has positive Lebesgue measure. This can be most easily seen a posteriori: Since $\phi$ is nontrivial, 
 $\psi$ is nontrivial as well. If $U\setminus \mathcal{N}$ had zero measure, then we would have $-\Delta \psi = z \psi$, but this has no $H^2$ solution, as a consequence of Rellich's theorem \cite{MR0017816}.
\end{rem}
The $L^q$-norm of $V$ is estimated by
\begin{align}\label{Lp norm V}
\|V\|_q\leq  \|K\|^{-1}\|\chi\|_q,
\end{align}
so it remains to estimate $\|K\|$ from below and $\|\chi\|_q$ from above.

To avoid technicalities at this stage we restrict attention to the Laplacian $H_0=-\Delta$, i.e.\ $h_0(\xi)=\xi^2$. Without loss of generality (scaling) we restrict ourselves to $\lambda=1$. For $\epsilon>0$ let $T_{\epsilon}$ be an $\epsilon^{-1}\times\epsilon^{-1/2}$ tube centred at the origin, with long side pointing in the $x_1$ direction, i.e.\ 
\begin{align}\label{T_eps}
T_{\epsilon}=\{|x_1|< \epsilon^{-1},\,|x'|< \epsilon^{-1/2}\}.
\end{align}
Let $\chi_{\epsilon}$ be the indicator function of the tube $T_{\epsilon}$.
We then have the following (qualitative) lower bound for $K_{\epsilon}=\chi_{\epsilon}\delta_{\epsilon}(H_0)\chi_{\epsilon}$. In Lemma \ref{lemma lower bound K} below we will prove a quantitative (almost optimal) lower bound.

\begin{lemma}\label{lemma soft 2}
Let $H_0=-\Delta$, $\lambda=1$ and $\chi_{\epsilon}$ as above. For $0<\epsilon\ll 1$ the operator norm $\epsilon \|K_{\epsilon}\|$ is bounded below by a positive constant, independent of $\epsilon$.
\end{lemma}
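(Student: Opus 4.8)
The plan is to bound $\langle K_\epsilon \phi, \phi\rangle$ from below for a well-chosen test function $\phi$ supported in the tube $T_\epsilon$, exploiting the Knapp wavepacket construction. Concretely, I would take $\phi = \chi_\epsilon \psi_\epsilon$ where $\widehat{\psi_\epsilon}$ is (a smooth bump times) the indicator of a Knapp cap on the unit sphere $\mathbb{S}^{d-1}$: the cap $C_\epsilon$ around the point $e_1=(1,0,\dots,0)$ of angular width $\sim \epsilon^{1/2}$, thickened in the radial direction by $\sim \epsilon$, so that $C_\epsilon \subset \{\xi : |\xi^2 - 1| \lesssim \epsilon\}$. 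On the Fourier side the operator $\delta_\epsilon(H_0) = \epsilon/((\,-\Delta - 1)^2 + \epsilon^2)$ acts as multiplication by $\epsilon/((\xi^2-1)^2 + \epsilon^2)$, which is $\gtrsim \epsilon^{-1}$ on $C_\epsilon$. The spatial function $\psi_\epsilon = \mathcal{F}^{-1}(\mathbf{1}_{C_\epsilon})$ (or a smoothed version) is, by the standard uncertainty-principle/stationary-phase heuristic, essentially concentrated on the dual tube $T_\epsilon$ of dimensions $\epsilon^{-1} \times \epsilon^{-1/2} \times \cdots \times \epsilon^{-1/2}$, on which it has size $\approx |C_\epsilon| \asymp \epsilon \cdot (\epsilon^{1/2})^{d-1} = \epsilon^{(d+1)/2}$.

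The key steps, in order, are: (1) Define the Knapp cap $C_\epsilon$ and the associated wavepacket, and record the pointwise lower bound $\widehat{\delta_\epsilon(H_0)} \gtrsim \epsilon^{-1}$ on $C_\epsilon$ and the normalization $\|\psi_\epsilon\|_2^2 = |C_\epsilon| \asymp \epsilon^{(d+1)/2}$ by Plancherel. (2) Show the \emph{truncation is harmless}, i.e.\ $\|\chi_\epsilon \psi_\epsilon\|_2 \gtrsim \|\psi_\epsilon\|_2$; this is where one needs that $\psi_\epsilon$ really does live on $T_\epsilon$ up to rapidly decaying tails — a stationary phase / non-stationary phase estimate giving $|\psi_\epsilon(x)| \lesssim_N \epsilon^{(d+1)/2} \langle \epsilon x_1 \rangle^{-N} \langle \epsilon^{1/2} x'\rangle^{-N}$, whose $L^2$ mass off $T_\epsilon$ is a negligible fraction of the total. (3) Estimate $\langle K_\epsilon \phi, \phi \rangle = \langle \delta_\epsilon(H_0)\chi_\epsilon \phi, \chi_\epsilon\phi\rangle = \langle \delta_\epsilon(H_0)\phi,\phi\rangle$ (since $\chi_\epsilon \phi = \phi$) from below: writing $\widehat{\phi} = \widehat{\chi_\epsilon} * \widehat{\psi_\epsilon}$, one must check $\widehat{\phi}$ still carries a constant fraction of its $L^2$ mass on the set where $\widehat{\delta_\epsilon(H_0)} \gtrsim \epsilon^{-1}$; alternatively, and more cleanly, use $\delta_\epsilon(H_0) \geq c\epsilon^{-1} P_\epsilon$ where $P_\epsilon$ is the Fourier projection onto a slightly larger slab $\{|\xi^2-1| \lesssim \epsilon\}$, and show $\|P_\epsilon \phi\|_2 \gtrsim \|\phi\|_2$ because $\widehat{\phi}$ is, modulo tails, supported in $C_\epsilon$ (the convolution with $\widehat{\chi_\epsilon}$, an $L^1$-normalized bump concentrated at scale $(\epsilon, \epsilon^{1/2}, \dots)$ on the frequency side, only spreads $\widehat{\psi_\epsilon}$ by a comparable amount). (4) Combine: $\epsilon \|K_\epsilon\| \geq \epsilon \langle K_\epsilon \phi,\phi\rangle / \|\phi\|_2^2 \gtrsim \epsilon \cdot \epsilon^{-1} \cdot \|\phi\|_2^2/\|\phi\|_2^2 \gtrsim 1$, uniformly in $\epsilon$.

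I expect the main obstacle to be step (2)–(3): controlling the interaction between the sharp spatial cutoff $\chi_\epsilon$ and the Fourier cutoff defining $\psi_\epsilon$. Using the sharp indicator $\mathbf{1}_{C_\epsilon}$ on the Fourier side makes $\psi_\epsilon$ decay only polynomially, not rapidly, so one should instead take $\widehat{\psi_\epsilon}$ to be a smooth bump adapted to $C_\epsilon$ (which the paper's convention allows). Then $\psi_\epsilon$ has rapid decay off $T_\epsilon$ and the truncation loses only a small fraction of mass; the convolution $\widehat{\chi_\epsilon} * \widehat{\psi_\epsilon}$ likewise remains essentially supported on a cap of comparable dimensions, so it still overlaps heavily with the region where $\delta_\epsilon(H_0)$ is of size $\epsilon^{-1}$. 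Since only a \emph{qualitative} uniform lower bound is claimed here (the sharp constant being deferred to Lemma~\ref{lemma lower bound K}), one has ample room to absorb all tails, and the argument should go through cleanly with Plancherel, a single integration-by-parts (non-stationary phase) estimate, and Hölder/Young for the convolution.
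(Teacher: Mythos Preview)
Your approach is correct and will yield the qualitative lower bound, but it is not the route the paper takes for this lemma. The paper's proof is a soft argument: it conjugates $K_\epsilon$ by $\e^{\I x_1}$ and rescales $(y_1,y')=(\epsilon x_1,\epsilon^{1/2}x')$, obtaining an operator $K'_\epsilon=\chi_1\delta_1(H'_\epsilon)\chi_1$ isospectral to $\epsilon K_\epsilon$, where $H'_\epsilon=-2\I\partial_{y_1}-\Delta_{y'}-\epsilon\partial_{y_1}^2$. Since $K'_\epsilon\to K'_0$ strongly as $\epsilon\to 0$ and $K'_0\neq 0$, one fixes any normalized $f$ with $\|K'_0 f\|>0$ and concludes $\epsilon\|K_\epsilon\|=\|K'_\epsilon\|\geq\|K'_\epsilon f\|\geq\tfrac12\|K'_0 f\|$ for small $\epsilon$. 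No explicit wavepacket is needed; the argument is four lines.

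Your direct Knapp test-function argument is essentially what the paper does later, in Lemma~\ref{lemma lower bound K}, to obtain the sharp quantitative bound $\epsilon\|K_\epsilon\|\geq 1-\mathcal{O}(M^{-2+})$ on a stretched tube. The trade-off is this: the soft rescaling-plus-strong-convergence proof is shorter and adapts immediately to variable-coefficient perturbations of the Laplacian (as the paper remarks), but gives no control on the constant. Your approach costs more in bookkeeping (tail estimates for $(1-\chi_\epsilon)\psi_\epsilon$, the interaction of the sharp spatial cutoff with the smooth Fourier cutoff), yet produces an explicit constant and is the right tool once one wants the near-optimal lower bound of Section~4. For the present lemma only the qualitative statement is needed, so the paper opts for the cheaper argument and defers yours to the quantitative section.
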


\begin{proof}
We conjugate $K_{\epsilon}$ by $\e^{\I x_1}$ and rescale $(y_1,y')=(\epsilon x_1,\epsilon^{1/2}x')$. The resulting operator is isospectral to $\epsilon K_{\epsilon}$ and given by 
\begin{align}\label{Keps'}
K'_{\epsilon}=\chi_1\delta_{1}(H_{\epsilon}')\chi_1
\end{align}
where $\chi_1$ is the indicator function of $T_{1}$ and $H_{\epsilon}'=-2\I\partial_{y_1}-\Delta_{y'}-\epsilon\partial_{y_1}^2$. In the limit $\epsilon\to 0$ the operator $K'_{\epsilon}$ converges strongly to $K'_{0}$. Since this is not the zero operator, there exists an $L^2$-normalized function $f$ such that $\|K_0'f\|>0$. Now the strong convergence implies that for $0<\epsilon\ll 1$ we have $\|K_{\epsilon}'f\|\geq \|K_0'f\|/2$. This implies that
\begin{align*}
\epsilon\|K_{\epsilon}\|=\|K_{\epsilon}'\|\geq \|K_{\epsilon}'f\|\geq \|K_0'f\|/2
\end{align*}
and thus proves the claim.
\end{proof}
%

\begin{rem}
Laptev and Safronov based their conjecture on the famous Wigner--von Neumann example \cite{wigner1929merkwurdige} (see also \cite{MR0493421}), which is a potential decaying like $1/|x|$ with embedded eigenvalue at $\lambda=1$. The potential is in $L^q$ for any $q>d$, corresponding to $\gamma>d/2$ in \eqref{LT bound}. Had they been aware of the Ionescu--Jerison example \cite{MR2024415} they might have conjectured the smaller range $\gamma\leq 1/2$, for which \eqref{LT bound} indeed holds \cite{MR2820160}. To prove the weaker statement with $q>d$ in Theorem 1 one can replace the tube~ \eqref{T_eps} by the ball $\{|x|< \epsilon^{-1}\}$. Then the operator \eqref{Keps'} (under the scaling $y=\epsilon x$) is independent of $\epsilon$ and becomes
\begin{align*}
K'=\chi_1\delta_{1}(-\Delta_{y})\chi_1.
\end{align*}
One can repeat the argument in the proof of Lemma \ref{lemma soft 2} (i.e.\ $K'\neq 0$) and combine the result with Lemma \ref{lemma soft 1} to conclude Theorem 1 for $q>d$ since now $\|V_{\epsilon}\|_q\lesssim \epsilon^{1-d/q}$.
\end{rem}
\begin{rem}
A straightforward adaptation of the proof of Lemma \ref{lemma soft 2} yields the same conclusion for the Laplacian $H_0=-\Delta_g$ where $g$ is a short range perturbation of the Euclidean metric, $g_{ij}(x)-\delta_{ij}(x)=\mathcal{O}(|x|^{-2-})$ as $|x|\to\infty$. This shows that the results of Guillarmou, Hassell and Krupchyk \cite[Theorem 4]{MR4150258} are optimal for such metrics. The upper bounds in \cite{MR4150258} are proved for the more general case  of nontrapping asymptotically conic manifolds of dimension $d\geq 3$. Of course, scaling is not available in this situation, and the mentioned modification of Lemma \ref{lemma soft 2} only shows optimality for $\lambda$ in a bounded interval.
\end{rem}
%

\subsection{Proofs of Theorems 1--3}

Combining Lemmas \ref{lemma soft 1} and \ref{lemma soft 2} we obtain a sequence of potentials $V_{\epsilon}$ and a sequence $z_{\epsilon}=1+\I\epsilon$ of eigenvalues of $-\Delta+V_{\epsilon}$, $0<\epsilon\ll 1$, such that
\begin{align}\label{Lq bound Veps}
\|V_{\epsilon}\|_q\lesssim \epsilon^{1-\frac{d+1}{2q}}.
\end{align}
This follows from \eqref{Lp norm V} and the fact that $\|\chi_{\epsilon}\|_q=|T_{\epsilon}|^{1/q}$ (where $|\cdot|$ denotes Lebesgue measure). Theorems 1 and 2 follow immediately since $|z_{\epsilon}|\asymp 1$, $\dist(z,\R_+)=\epsilon$. 
Theorem 3 follows from the same counterexample.
We use in addition that, in the limit $\epsilon\to 0$, $\im\sqrt{z_\epsilon}/\epsilon\to 1/2$, and then the substitution $(x_1,x')=(\epsilon^{-1}y_1,\epsilon^{-1/2}y')$ yields
\begin{align*}
F_V(L\im\sqrt{z_\epsilon})&\lesssim \epsilon^{\frac{d+1}{2}}\int_{T_\epsilon}\exp(-L\im\sqrt{z_\epsilon}|x|) \rd x
\asymp \int_{T_1}\exp(-L|y_1|/2) \rd y\asymp L^{-1}.
\end{align*}

\subsection{Proof of Theorem 5}
As explained in \cite[Rem.~1]{MR3627408}, a counterexample to the Laptev--Safronov conjecture for a $q>(d+1)/2$ allows one to modify the construction in \cite[Th.~1]{MR3627408} to hold for this particular $q$. The only modification in the proof is to find a class of potentials satisfying the claim of \cite[Lem.~1]{MR3627408}, now for $q>(d+1)/2$, which is done in the following result.
\begin{lemma}\label{lem accum}
Let $d\geq 2$, $\lm\in \R_+$ and $q>(d+1)/2$.
For any $\eps_0,\delta_0,r_0>0$ there exists $V\in L^{\infty}(\R^d)\cap L^q(\R^d)$ with
$ \|V\|_{q}< \eps_0$, $\|V\|_{\infty}< \delta_0$ and such that there exists a non-real eigenvalue of $H_V$ in the ball  $B(\lambda,r_0)$.
\end{lemma}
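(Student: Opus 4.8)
The plan is to reduce Lemma \ref{lem accum} to the $\lambda=1$ construction already in hand via scaling, and then to upgrade the $L^\infty$-bound obtained for free from Lemma \ref{lemma soft 1} by a separate smallness argument. First I would observe that the counterexample of Lemmas \ref{lemma soft 1} and \ref{lemma soft 2} produces, for each small $\eps>0$, a potential $V_\eps$ with $z_\eps=1+\I\eps\in\sigma_{\rm p}(H_{V_\eps})$, $|V_\eps|\le \|K_\eps\|^{-1}\chi_\eps\lesssim \eps\,\chi_\eps$ (using Lemma \ref{lemma soft 2}), and hence $\|V_\eps\|_\infty\lesssim\eps$ and $\|V_\eps\|_q\lesssim\eps^{1-\frac{d+1}{2q}}$ by \eqref{Lq bound Veps}. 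Since $q>(d+1)/2$ the exponent $1-\frac{d+1}{2q}$ is strictly positive, so \emph{both} norms tend to $0$ as $\eps\to0$; this is precisely the feature that $q>d$ is not needed for. Given $\lambda\in\R_+$, I would then rescale: setting $V^\lambda_\eps(x):=\lambda\,V_\eps(\sqrt{\lambda}\,x)$, a change of variables shows $H_{V^\lambda_\eps}=-\Delta+V^\lambda_\eps$ is unitarily equivalent (via the $L^2$-dilation) to $\lambda(-\Delta+V_\eps)$, so $\lambda z_\eps=\lambda+\I\lambda\eps$ is a non-real eigenvalue of $H_{V^\lambda_\eps}$, and it lies in $B(\lambda,r_0)$ once $\eps$ is small. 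The scaling changes the norms only by $\lambda$-dependent constants: $\|V^\lambda_\eps\|_q=\lambda^{1-d/(2q)}\|V_\eps\|_q$ and $\|V^\lambda_\eps\|_\infty=\lambda\|V_\eps\|_\infty$, both still $\to0$ as $\eps\to0$.

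With this in place the proof is essentially a matter of choosing $\eps$ small enough. Concretely: given $\eps_0,\delta_0,r_0>0$, pick $\eps=\eps(\lambda,\eps_0,\delta_0,r_0)>0$ so small that simultaneously $\|V^\lambda_\eps\|_q<\eps_0$, $\|V^\lambda_\eps\|_\infty<\delta_0$, and $|\lambda z_\eps-\lambda|=\lambda\eps<r_0$; this is possible by the limits just noted. The potential $V:=V^\lambda_\eps$ is bounded and compactly supported, hence in $L^\infty\cap L^q$, and $H_V$ has the non-real eigenvalue $\lambda z_\eps\in B(\lambda,r_0)$, which is exactly the assertion. Then, as recalled in the sentence preceding the lemma, feeding this class of potentials into the Bögli--Cuenin/Bögli(?) mechanism of \cite[Th.~1 and Lem.~1]{MR3627408} — one glues together rescaled, spatially separated, phase-modulated copies of such $V$'s, one for each point of a countable dense subset of $\R_+$, with rapidly decaying amplitudes so that the total potential still has small $L^\infty$ and $L^q$ norms and the individual eigenvalues survive the perturbation — yields Theorem \ref{thm accumulation}. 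I would only need to verify that the cited construction uses nothing about $q$ beyond the existence of the class of potentials furnished by Lemma \ref{lem accum}, which is exactly the content of \cite[Rem.~1]{MR3627408}.

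The only genuinely nontrivial input is the claim $\|K_\eps\|^{-1}\lesssim\eps^{-1}$, i.e.\ Lemma \ref{lemma soft 2}, which I would simply cite; everything downstream is bookkeeping with scaling. The main obstacle, such as it is, is making sure the eigenvalue is \emph{non-real} and genuinely stays inside the prescribed ball after scaling — but since $\im(\lambda z_\eps)=\lambda\eps>0$ for every $\eps>0$ this is immediate, and the Birman--Schwinger/eigenvalue equation from Lemma \ref{lemma soft 1} is exact (not perturbative), so there is no stability issue to worry about for the single copy. A secondary point to be careful about: the construction of Lemma \ref{lemma soft 1} was carried out for $H_0=-\Delta$ and $\lambda=1$, so one should either run it directly at general $\lambda$ (the proof goes through verbatim with $z=\lambda+\I\eps$) or, as above, obtain general $\lambda$ by dilation; I would take the dilation route since it keeps the exposition minimal and makes the norm dependence on $\lambda$ transparent.
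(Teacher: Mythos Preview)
Your proposal is correct and follows essentially the same approach as the paper: use Lemmas \ref{lemma soft 1} and \ref{lemma soft 2} to get $\|V_\eps\|_q\lesssim\eps^{1-\frac{d+1}{2q}}$ and $\|V_\eps\|_\infty\lesssim\eps$ at $\lambda=1$, then reach general $\lambda$ by dilation. The paper dispatches the $\lambda\neq 1$ case with the single phrase ``follows by scaling,'' whereas you spell out the rescaled potential $V^\lambda_\eps(x)=\lambda V_\eps(\sqrt{\lambda}\,x)$ and the resulting norm factors $\lambda^{1-d/(2q)}$ and $\lambda$; this is a welcome elaboration, and your scaling computation is correct.
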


\begin{proof}
If $\lambda=1$, we  use 
Lemmas \ref{lemma soft 1} and \ref{lemma soft 2} to obtain a sequence of potentials $V_{\epsilon}$ and a sequence $z_{\epsilon}=1+\I\epsilon$ of eigenvalues of $-\Delta+V_{\epsilon}$, $0<\epsilon\ll 1$, such that
$\|V_{\epsilon}\|_q\lesssim \epsilon^{1-\frac{d+1}{2q}}$ and $\|V_{\epsilon}\|_\infty\lesssim \epsilon$.
Then the claim follows by taking $\epsilon$ sufficiently small.
If $\lambda\neq 1$, the claim follows by scaling to the previous case.
\end{proof}

Now the proof of Theorem \ref{thm accumulation} is completely analogous to the one of \cite[Th.~1]{MR3627408}, using Lemma \ref{lem accum} instead of \cite[Lem.~1]{MR3627408}. Note that in \cite{MR3627408} the eigenvalues are constructed in the lower complex half-plane whereas here we constructed eigenvalues in the upper complex half-plane; one could take the adjoint operator to transform one case into the other one.

\section{Perturbation of embedded eigenvalues}

In this section we provide an alternative counterexample to the Laptev--Safronov conjecture that is closer to that suggested by Frank and Simon \cite{MR3713021}. We continue to make the same assumptions on $H_0$ and $U$ as in Lemma \ref{lemma soft 1}.


\begin{prop}\label{thm:embedded}
Let $\lambda\in\sigma(H_0)$ and $f\in\dom(H_0)$ with $\|f\|=1$. 
Assume that
$ \|(1-\chi)f\|\leq 1/4.$
Then, for any $\eps\geq 2\|(H_0-\lm)f\|$, there exists $V\in L^{\infty}(\R^d)$ such that $z=\lambda+i\eps$ is an eigenvalue of $H_0+V$  and $|V|\leq 4\eps \chi$.
In particular, if $\lambda$ is an eigenvalue and $(H_0-\lambda)f=0$, then the conclusion holds for all $\eps\geq 0$.
\end{prop}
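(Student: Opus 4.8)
The plan is to run the Birman--Schwinger machinery of Lemma~\ref{lemma soft 1} in reverse, starting from the explicit (nearly localized) vector $f$ rather than from an eigenfunction of the operator $K$. The key observation is that the eigenvalue equation $(H_0-z)\psi = -V\psi$ for $z=\lambda+\I\eps$ can be recast, after writing $-V = \|K\|^{-1}\chi \cdot (\text{something})$ in the soft argument, as a fixed-point problem for a self-map of the unit ball. Concretely, I would set $g:=(H_0-z)f$ and note $\|g\|\le \|(H_0-\lm)f\| + \eps\|f\| \le \tfrac{3}{2}\eps$ by the hypothesis $\eps\ge 2\|(H_0-\lm)f\|$ and $\|f\|=1$; in the pure eigenvalue case $g=\I\eps f$. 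Then I want to find $\psi$ with $(H_0-z)\psi = \eps\chi\,\im\psi \cdot (-V/(\eps))$, i.e.\ I want to produce a function $\psi$, close to $f$, whose nodal set is negligible and for which $V:=-\mathbf 1_{U\setminus\mathcal N}\,(\text{ratio forcing the equation})$ is bounded by $4\eps\chi$.

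The cleaner route, mirroring Lemma~\ref{lemma soft 1} exactly, is: solve instead the linear equation for the Birman--Schwinger vector. Put $\phi := \chi\,\im\big[(H_0-z)^{-1}\big]^{-1}(\cdots)$ — more precisely, I would look for $\phi\in L^2$ supported in $U$ solving $\phi = \mu^{-1} K\phi + (\text{source built from }f)$, where $\mu$ is a small parameter of size $\asymp\eps$, and then set $\psi:=(H_0-z)^{-1}\phi$ and $V:=-\mu\,\mathbf 1_{U\setminus\mathcal N}\,\im\psi/\psi$ along the lines of Lemma~\ref{lemma soft 1}. Using $\delta_{\lm,\eps}(H_0)=\im(H_0-z)^{-1}$ and the resolvent bound $\|(H_0-z)^{-1}\|\le \eps^{-1}$, one checks $\|K\|\le \|\chi\delta_{\lm,\eps}(H_0)\chi\|\le \eps\cdot\eps^{-2}\cdot\eps = 1$, wait — more carefully $\|\delta_{\lm,\eps}(H_0)\|\le (2\eps)^{-1}$, so $\|K\|\le (2\eps)^{-1}$; combined with the localization hypothesis $\|(1-\chi)f\|\le 1/4$ (which guarantees the source term is genuinely nonzero after projecting with $\chi$, so $\psi\not\equiv 0$ and $U\setminus\mathcal N$ has positive measure), a Neumann series / contraction argument for the map $\phi\mapsto \mu^{-1}K\phi$ converges once $|\mu|> \|K\|$, and choosing $\mu$ comparable to $\eps$ keeps $|V|=|\mu|\,|\im\psi/\psi|$ controlled.

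The bookkeeping I would push through to land the constant $4$: choose $\mu=\eps$ (or $\mu=2\eps$), solve $((H_0-\lm)^2+\eps^2 - (\eps/\mu)\chi^2)\psi = (\text{rank considerations})$, and track that the forcing from $f$ contributes a term of size $\lesssim \|g\|\lesssim \eps$ while the correction stays within a factor determined by $1-\|K\|/|\mu| \ge 1-1/2 = 1/2$ and by $\|\chi f\|\ge \|f\|-\|(1-\chi)f\|\ge 3/4$. These two numerical inputs are exactly why the hypotheses read $1/4$ and $2$, and they should combine to give $|V|\le 4\eps\chi$. For the last sentence of the statement (genuine eigenvalue, $(H_0-\lm)f=0$): here $g=\I\eps f$ exactly, the source term is literally $\chi f$, and the construction degenerates to taking $\psi$ a small perturbation of $f$; since no lower bound on $\eps$ was needed beyond $\eps\ge 2\cdot 0=0$, the conclusion holds for all $\eps\ge 0$ (with $\eps=0$ trivial, $V=0$).

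The main obstacle I anticipate is \emph{not} the functional-analytic fixed point — that is soft, given the uniform resolvent bound $\|(H_0-z)^{-1}\|\le(\im z)^{-1}$ valid for self-adjoint $H_0$ — but rather ensuring that the resulting $V$ is genuinely an $L^\infty$ function (i.e.\ that $\im\psi/\psi$ does not blow up off the nodal set) and that the nodal set $\mathcal N$ does not carry mass. The first requires the elliptic regularity of Lemma~\ref{lemma elliptic regularity} together with the fact that $(H_0-z)^{-1}$ is pseudolocal (Beals), so $\psi$ is smooth in $U$; the honest bound $|V|\le 4\eps\chi$ then comes from the eigenvalue equation itself, exactly as in Lemma~\ref{lemma soft 1}, rather than from pointwise control of the ratio — the equation $(H_0-z)\psi+V\psi=0$ forces $|V|\le |\mu|$ a.e.\ on $U\setminus\mathcal N$ because $|\im\psi|\le|\psi|$ there. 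The second point (that $\psi$ is nontrivial and $U\setminus\mathcal N$ has positive measure) is where the hypothesis $\|(1-\chi)f\|\le 1/4$ does its work, by keeping the source $\chi f$ away from zero.
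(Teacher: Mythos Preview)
Your proposal has a genuine gap: the fixed-point/Neumann-series route cannot close with the constants you need, and more importantly it does not produce an eigenvalue equation at all.

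First, the numerics. The operator norm of $\delta_{\lambda,\epsilon}(H_0)=\epsilon/((H_0-\lambda)^2+\epsilon^2)$ is $\sup_{t\in\R}\epsilon/(t^2+\epsilon^2)=\epsilon^{-1}$, so $\|K\|\le \epsilon^{-1}$, not $(2\epsilon)^{-1}$. Worse, the whole point of the hypotheses is that $\|K\|$ is \emph{large}, of order $\epsilon^{-1}$ (this is what makes $|V|\le\|K\|^{-1}\chi$ small in Lemma~\ref{lemma soft 1}). Hence your contraction condition $|\mu|>\|K\|$ forces $|\mu|\gtrsim \epsilon^{-1}$, and then $|V|\le|\mu|\chi$ gives a bound of order $\epsilon^{-1}\chi$, not $4\epsilon\chi$. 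The inequality runs the wrong way.

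Second, and more structurally: solving the inhomogeneous equation $\phi=\mu^{-1}K\phi+s$ does \emph{not} give an eigenfunction of $K$. In Lemma~\ref{lemma soft 1} the identity $(H_0-z)\psi=\|K\|^{-1}\chi\,\im\psi$ holds precisely because $K\phi=\|K\|\phi$ with $\phi$ real; a leftover source term $s$ would appear on the right-hand side and could not be absorbed into a multiplication operator $V$.

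The paper's argument is much shorter and uses $f$ only as a \emph{test vector} for $K$, not as a seed for the eigenfunction. Write $X=H_0-\lambda$ and $\epsilon\delta_{\lambda,\epsilon}(H_0)=1-X^2/(X^2+\epsilon^2)$. Since $\|Xf\|\le\epsilon/2$ and $\epsilon\|X(X^2+\epsilon^2)^{-1}\|\le 1/2$, one gets $\|X^2(X^2+\epsilon^2)^{-1}f\|\le 1/4$; together with $\|(1-\chi)f\|\le 1/4$ this yields
\[
\epsilon\|K f\|=\epsilon\|\chi\delta_{\lambda,\epsilon}(H_0)\chi f\|\ge \|\chi f\|-\tfrac12\ge \tfrac14.
\]
Thus $\epsilon\|K\|\ge 1/4$, and Lemma~\ref{lemma soft 1} applied verbatim gives $|V|\le\|K\|^{-1}\chi\le 4\epsilon\chi$. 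No perturbation of $f$, no source term, no Neumann series is needed; the actual eigenfunction $\psi$ is built from the (unknown) top eigenvector of $K$, and $f$ serves only to certify that this top eigenvalue is at least $(4\epsilon)^{-1}$.
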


\begin{proof}
We set $X=(H_0-\lambda)$ and write $\epsilon\delta_{\lambda,\epsilon}(H_0)=1-X^2/(X^2+\epsilon^2)$. By the Peter Paul inequality, $\epsilon\|X/(X^2+\epsilon^2)\|\leq 1/2$. Since $\|Xf\|\leq \epsilon/2$, this yields $\|X^2/(X^2+\epsilon^2)f\|\leq 1/4$, and hence (using $\|X^2/(X^2+\epsilon^2)\|\leq 1$ and $\|\chi\|\leq 1$)
\begin{align*}
\|\chi X^2/(X^2+\epsilon^2)\chi f\|\leq \|X^2/(X^2+\epsilon^2)f\|+\|X^2/(X^2+\epsilon^2)\|\|(1-\chi)f\|\leq \frac{1}{2}.
\end{align*}
We have also used $\|(1-\chi)f\|\leq 1/4$ in the last inequality. Using this once more, we obtain
\begin{align*}
\epsilon\|\chi \delta_{\lambda,\epsilon}(H_0)\chi f\|\geq \|\chi f\|-\frac{1}{2}\geq \frac{1}{4}.
\end{align*}
Thus $\epsilon\|K\|\geq 1/4$ and Lemma \ref{lemma soft 1} implies the claim.
\end{proof}

\begin{rem}
Recall that the self-adjointness of $H_0$ implies that for every $\lambda\in\sigma(H_0)$ there exists a normalized sequence $(f_n)_{n\in\N}\subset\dom(H_0)$ with $\|(H_0-\lambda)f_n\|\to 0$. Thus we can always find a normalized function for which $\|(H_0-\lambda)f\|$ is as small as we want. However, we need to make sure that $U$ is so large that the assumption $\|(1-\chi)f\|\leq 1/4$ is satisfied.
\end{rem}

\subsection{Alternative counterexample to Laptev--Safronov conjecture}

Let $q>(d+1)/2$.
Consider the sequence of potentials $V_n\in C^{\infty}(\R^d)$, $n\in\N$, in \cite[Theorem 2.1]{MR3713021} where $\lm=1$ is an embedded eigenvalue of $-\Delta+V_n$ for each $n$. The potentials satisfy $|V_n(x)|\lesssim(n+|x_1|+|x'|^2)^{-1}$.
In particular, $\|V_n\|_q\to 0$ and $\|V_n\|_{\infty}\to 0$ as $n\to\infty$.
Now fix $n\in\N$. Denote by $f_n$ a normalized eigenfunction corresponding to the embedded eigenvalue.
Let $U_n\subset \R^d$ be a compact subset that is so large that $\|(1-\mathbf{1}_{U_n})f_n\|\leq 1/4$.
Then, by Proposition~\ref{thm:embedded}, for all $\eps>0$ there exist potentials $W_{n,\eps}\in L^{\infty}(\R^d)$ such that $z_{\eps}=1+i\eps \in\sigma_p(-\Delta+V_n+W_{n,\eps})$ with 
$|W_{n,\eps}|\leq 4\eps\mathbf{1}_{U_n}$.
Let $(\eps_n)_n$ be such that $\epsilon_n=o(|U_n|^{-1/q})$ as $n\to\infty$.
Then
$$\|W_{n,\eps_n}\|_q \leq 4\eps_n |U_n|^{1/q}=o(1).$$
This disproves the Laptev-Safronov conjecture
since
$z_{\eps_n}\to 1$, $\|V_n+W_{n,\eps_n}\|_q\to 0$ in the limit $n\to\infty$.
Note that this construction is similar to the one in the previous section, as one can take $\epsilon_n=1/n$ and $U_n=T_{c_0/n}$ for a small positive constant~$c_0$. However, due to the additional $V_n$, the potentials here don't have compact supports.

\section{Quantitative lower bounds}

The aim of this section is to optimize the lower bound on $\epsilon \|K_{\epsilon}\|$ in Lemma~\ref{lemma soft 2}. Since the proof relied on soft arguments it did not provide a quantitative lower bound. The trivial upper bound is $\epsilon \|K_{\epsilon}\|\leq 1$. By stretching the tube $T_{\epsilon}$ in \eqref{T_eps}, we are able to prove an almost sharp lower bound. More precisely, let $\chi_{\epsilon}$ be the indicator function of $T_{\epsilon/M}$, where $M\gg 1$. 

\begin{lemma}\label{lemma lower bound K}
Let $\chi_{\epsilon}$ be as above. Then
\begin{align}\label{lower bound K}
\epsilon\|K_{\epsilon}\|\geq 1-\mathcal{O}(M^{-2+}).
\end{align}
\end{lemma}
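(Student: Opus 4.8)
The plan is to make the soft convergence argument of Lemma~\ref{lemma soft 2} quantitative by choosing the test function $f$ explicitly as a Knapp wavepacket and tracking all error terms in powers of $M^{-1}$. After conjugating $K_\epsilon$ by $\e^{\I x_1}$ and rescaling $(y_1,y')=((\epsilon/M) x_1,(\epsilon/M)^{1/2}x')$, one is reduced to bounding below $\|K'_\epsilon\|$ where $K'_\epsilon=\chi_1\delta_1(H'_{\epsilon/M})\chi_1$, $\chi_1=\mathbf{1}_{T_1}$, and $H'_{\epsilon/M}=-2\I\partial_{y_1}-\Delta_{y'}-(\epsilon/M)\partial_{y_1}^2$; equivalently, after undoing the rescaling, the relevant symbol is $h_0(\xi)-1=(2/M)(\xi_1-M/2)+\xi'^2+(M^{-2})(\xi_1-M/2)^2$ on the appropriate tube in frequency. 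The point of stretching the tube by a factor $M$ in the long direction is that it localizes the Fourier support to a cap on $S_1=\mathbb{S}^{d-1}$ of angular width $\sim M^{-1/2}$ about the point $\xi^0=e_1$, on which the curved surface differs from its tangent plane by $\mathcal{O}(M^{-1})$ in the $\xi_1$ direction — this is exactly the Knapp geometry, and the $M^{-2+}$ in the statement reflects the square of that deviation.

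Concretely, I would take $f$ to be a normalized bump adapted to this cap: in the rescaled coordinates, $\widehat{f}$ supported in a box of dimensions $\sim 1 \times \sqrt{1}^{\,d-1}$ centered so that $h_0'=1$ holds to leading order, i.e. $f(y)=c\,\psi(y_1)\prod_{j\geq 2}\psi(y_j)$ for a fixed Schwartz bump $\psi$, so that $f$ is essentially concentrated on $T_1$ (contributing only rapidly decaying tail errors from $\chi_1$, which are $\mathcal{O}(M^{-N})$ and harmless). The main computation is then $\langle f, K'_\epsilon f\rangle = \langle \chi_1 f, \delta_1(H'_{\epsilon/M})\chi_1 f\rangle$. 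Writing $\delta_1(H'_{\epsilon/M})$ via the spectral theorem, $\langle g,\delta_1(H'_{\epsilon/M})g\rangle = \int \frac{\epsilon/M}{(h'_{\epsilon/M}(\xi)-1)^2+(\epsilon/M)^2}|\widehat g(\xi)|^2\,\rd\xi$ with $g=\chi_1 f$, and on the support of $\widehat g$ the phase $h'_{\epsilon/M}(\xi)-1$ equals the linearized expression plus an $\mathcal{O}(M^{-1})$ error. Since $\epsilon\|K_\epsilon\|=\|K'_\epsilon\|\geq \langle f,K'_\epsilon f\rangle/\|f\|^2$, and the leading term is $\int \frac{\delta}{(\ell(\xi))^2+\delta^2}|\widehat g|^2$ with $\delta=\epsilon/M$ and $\ell$ linear, I would integrate out the direction along which $\ell$ varies: this produces $\pi$ (the full mass of the Poisson kernel) up to the relative error coming from (a) replacing $h'-1$ by $\ell$, which costs $\mathcal{O}(M^{-1})\cdot(\text{width of }|\widehat g|^2)$, i.e. $\mathcal{O}(M^{-2+})$ after accounting for the $\delta$ in the denominator cutting off the effective region, and (b) the tails of $\widehat g$ outside the core box, which are $\mathcal{O}(M^{-N})$. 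Normalizing so that the leading term equals $1$, one gets $\epsilon\|K_\epsilon\|\geq 1-\mathcal{O}(M^{-2+})$.

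The step I expect to be the main obstacle is the careful bookkeeping of the error in replacing the true symbol $h_0(\xi)$ by its second-order Taylor polynomial at $\xi^0$ on the stretched cap, and in particular justifying the exponent $-2+$ rather than, say, $-1$. The subtlety is that $\delta_\epsilon(H_0)$ is a sharply peaked kernel of height $\sim \delta^{-1}$ and width $\sim\delta$ in the normal direction, so a pointwise $\mathcal{O}(M^{-1})$ perturbation of the phase is not automatically an $\mathcal{O}(M^{-1})$ perturbation of the integral — one must check that the quadratic deviation of $S_1$ from its tangent plane is $\mathcal{O}(M^{-1})$ uniformly on the cap of width $M^{-1/2}$ \emph{and} that this deviation, being second order, contributes at the level $M^{-2}$ to the resolvent mass after the relevant cancellations, with the extra $\delta>0$ in $\mathcal{O}(M^{-2+})$ absorbing a logarithmic or near-borderline loss from the slowly decaying tail of the Poisson kernel. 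Once this is pinned down, combining with Lemma~\ref{lemma soft 1} and \eqref{Lp norm V} (now with $\|\chi_\epsilon\|_q=|T_{\epsilon/M}|^{1/q}$) gives the sharp form of all the lower bounds in Theorems~1--3 with the optimal constant in the limit $M\to\infty$.
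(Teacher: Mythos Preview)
Your proposal contains a genuine gap: the localization error is \emph{not} $\mathcal{O}(M^{-N})$ as you claim. After your rescaling, both the cutoff $\chi_1=\mathbf{1}_{T_1}$ and your test function $f(y)=c\,\psi(y_1)\prod_j\psi(y_j)$ are \emph{fixed} objects, independent of $M$. Hence $\|(1-\chi_1)f\|$ is a fixed positive constant, and so is the discrepancy between $\langle f,K'_\epsilon f\rangle/\|f\|^2$ and the ``main term''. Your argument therefore only yields $\epsilon\|K_\epsilon\|\geq c-\mathcal{O}(M^{-2})$ for some fixed $c<1$, not $1-\mathcal{O}(M^{-2+})$. (Relatedly, your rescaled operator is not $\delta_1(H'_{\epsilon/M})$ but rather $M^2/((H'_{\epsilon/M})^2+M^2)$; the mismatch between the tube scale $\epsilon/M$ and the spectral scale $\epsilon$ is precisely where the large parameter $M$ enters.)

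What is missing is an extra scale parameter. The paper works in the original coordinates and takes a Knapp wavepacket $\hat f_\epsilon$ supported on a cap of width $c_0\epsilon\times(c_0\epsilon)^{1/2}$ near $e_1$, with $c_0$ to be chosen. Then on $\supp\hat f_\epsilon$ one has $|\xi^2-1|=\mathcal{O}(c_0\epsilon)$, so pointwise $\epsilon\delta_\epsilon(\xi)\geq 1-\mathcal{O}(c_0^2)$; no Poisson-kernel integration is needed. On the other hand $f_\epsilon$ is concentrated on a tube $T_{c_0\epsilon}$, which sits inside $T_{\epsilon/M}$ with room to spare by a factor $c_0M$, giving localization errors $\mathcal{O}_N((c_0M)^{-N})$. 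The $M^{-2+}$ then arises not from any ``logarithmic loss from the Poisson tail'' but simply from balancing $c_0^2$ against $(c_0M)^{-N}$: choosing $c_0=M^{-1+\delta}$ makes the first error $M^{-2+2\delta}$ and the second $M^{-\delta N}$, negligible for large $N$. In your rescaled picture this corresponds to taking $\hat f$ on a box of side $\sim c_0M$ (growing with $M$), so that $f$ lives on a sub-tube of $T_1$ of side $\sim (c_0M)^{-1}$---this is the trade-off your fixed bump cannot capture.
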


\begin{proof}
To prove the lower bound in \eqref{lower bound K} we first write 
\begin{align*}
K_{\epsilon}=\delta_{\epsilon}(H_0)-(1-\chi_{\epsilon})\delta_{\epsilon}(H_0)-\chi_{\epsilon}\delta_{\epsilon}(H_0)(1-\chi_{\epsilon}).
\end{align*}
We will treat $\delta_{\epsilon}(H_0)$ as a main term and the other terms as errors. Since $\|\chi_{\epsilon}\|_{\infty}\leq 1$,
\begin{align*}
\|K_{\epsilon}f\|\geq \|\delta_{\epsilon}(H_0)f\|-\|(1-\chi_{\epsilon})\delta_{\epsilon}(H_0)f\|-\|\delta_{\epsilon}(H_0)(1-\chi_{\epsilon})f\|
\end{align*}
for any $f\in L^2(\R^d)$. We take 
\begin{align*}
\hat{f}_{\epsilon}(\xi):=\eta_0((c_0\epsilon)^{-1}(\xi_1-1),(c_0\epsilon)^{-1/2}\xi'),
\end{align*}
where $\xi=(\xi_1,\xi')\in \R\times\R^{d-1}$, $\eta_0\in C_0^{\infty}(B(0,2))$ is a nonnegative bump function equal to $1$ on $B(0,1)$, and $c_0$ is a small positive constant, to be chosen later.  We mention that $f$ is known as a 'Knapp example' in harmonic analysis, see e.g.\ \cite[Example 1.8]{MR3971577}. For $\xi$ in the support of $\hat{f}_{\epsilon}$, we have $|\xi^2-1|=\mathcal{O}(c_0\epsilon)$, which means that $\epsilon\delta_{\epsilon}(\xi)\geq 1-\mathcal{O}(c_0^2)$ there. Here and in the following $\delta_{\epsilon}(\xi)=\frac{\epsilon}{(\xi^2-1)^2+\epsilon^2}$. By Plancherel, we conclude that
\begin{align}\label{l.b. delta f}
\epsilon\|\delta_{\epsilon}(H_0) f_{\epsilon}\|\geq (1-\mathcal{O}(c_0^2))\|f_{\epsilon}\|.
\end{align}
Note that $\epsilon\delta_{\epsilon}(\xi)\leq 1$ and is smooth on the scale of $f_{\epsilon}$; in other words, we can write
\begin{align*}
(\delta_{\epsilon} \hat{f}_{\epsilon})(\xi)=\epsilon^{-1}\eta((c_0\epsilon)^{-1}(\xi_1-1),(c_0\epsilon)^{-1/2}\xi')
\end{align*}
for some bump function $\eta$, similar to $\eta_0$. More precisely, by $\eta$ we really mean a family of such bump functions, with smooth norms bounded uniformly in~$\epsilon$. Thus, both $f_{\epsilon}$ and $\delta_{\epsilon}(H_0) f_{\epsilon}$ are Schwartz functions decaying rapidly away from $T_{c_0\epsilon}$; in particular,
\begin{align*}
\|(1-\chi_{\epsilon}) f_{\epsilon}\|\lesssim_N (c_0M)^{-N}\|f_{\epsilon}\|,\quad
\|(1-\chi_{\epsilon})\delta_{\epsilon}(H_0) f_{\epsilon}\|\lesssim_N\epsilon^{-1}(c_0M)^{-N}\|f_{\epsilon}\|,
\end{align*}
where we used that $\|f_{\epsilon}\|^2\asymp (c_0\epsilon)^{\frac{d+1}{2}}$ and $\chi_{\epsilon}=1$ on $T_{\epsilon/M}$. Together with \eqref{l.b. delta f} and the fact that $\epsilon\delta_{\epsilon}(\xi)\leq 1$, this yields
\begin{align*}
\epsilon\|K_{\epsilon}f_{\epsilon}\|\geq(1-\mathcal{O}(c_0^2)-\mathcal{O}_N((c_0M)^{-N}))\|f_{\epsilon}\|.
\end{align*}
Choosing $c_0=M^{-1+}$ and taking $N$ sufficiently large yields the lower bound in~\eqref{lower bound K}.
\end{proof}

\begin{rem}
In view of $\epsilon\|K_{\epsilon}\|\leq 1$ the bound \eqref{lower bound K} is optimal in the limit $M\to\infty$. If we choose e.g.\ $M=\log(1/\epsilon)$, then we obtain from \eqref{lower bound K} that for all $q>(d+1)/2$,
\begin{align*}
\lim_{\epsilon\to 0}\epsilon\|K_{\epsilon}\|=1,\quad \lim_{\epsilon\to 0}\|V_{\epsilon}\|_q=0.
\end{align*}
\end{rem}

\subsection{Lower bounds for constant-coefficient operators}
Here we supplement Lemma \ref{lemma soft 2} with quantitative bounds. We consider more general constant-coefficient operators than the Laplacian, that is we allow $H_0=h_0(D)$. For instance, for the fractional Laplacian (Theorem 4) we have $h_0(\xi)=|\xi|^s$. For the lower bound we only assume that $h_0$ is a tempered distribution, smooth in a neighborhood of some $\xi^0\in\R^d$ and such that $\lambda:=h_0(\xi^0)$ is a regular value of $h_0$. If the hypersurface $\{h_0(\xi)=\lambda\}$ has everywhere nonvanishing Gauss curvature, then local versions of  \eqref{LT bound}, \eqref{Frank bound long range} hold (see Section \ref{Section upper bounds generalized kin. energies}). If the Gauss curvature vanishes at some point, then the upper bound will be worse, but the following example still provides a lower bound. One could improve the example if more is known about the local geometry of the isoenergy surface.

The following example is very close to that of Lemma \ref{lemma lower bound K} and is based on the factorization
\begin{align*}
h_0(\xi)-\lambda=e(\xi)(\xi_1-a(\xi')),
\end{align*}
which holds locally near $\xi^0$, with $e$ nonvanishing there. We suppress the dependence of $e,a$ on $\lambda$.
By a linear change of coordinates we assume, as we may, that $a(0)=0$ and $\partial_{\xi'}a(0)=0$. Then $a(\xi')=\mathcal{O}(|\xi'|^2)$. We take a Knapp example $f$ whose Fourier support is contained in the cap
\begin{align*}
\kappa_{\epsilon}:=\{|\xi_1|<c_0\epsilon, |\xi'|<(c_0\epsilon)^{1/2}\}.
\end{align*}    
Clearly, $\kappa_{\epsilon}$ is contained in an $\epsilon$ neighborhood of the isoenergy surface $\{\xi_1=a(\xi')\}$, hence 
\begin{align*}
\epsilon\delta_{\epsilon}(\xi)=\frac{\epsilon^2}{(h_0(\xi)-\lambda)^2+\epsilon^2}\geq 1-\mathcal{O}(c_0^2).
\end{align*}
Now the same argument as for the Laplacian shows that \eqref{lower bound K} holds, for exactly the same function $\chi_{\epsilon}$.

%

\section{Upper bounds}\label{Section upper bounds generalized kin. energies}

In this section we prove a generalization of the bounds  \eqref{LT bound}, \eqref{Frank bound long range} for Schrödinger type operators of the form
\begin{align}\label{more gen. Schroedinger op.}
H_V=h_0(D)+V(x,D).
\end{align}
We first consider the classical Schrödinger operator $H_V=-\Delta+V(x)$ to explain what types of estimates we will prove. By homogeneity, the estimates \eqref{LT bound}, \eqref{Frank bound long range} may by reduced to $|z|=1$ and by elliptic regularity to a small neighborhood of $z=1$. Hence these bounds can collectively be expressed as
\begin{align}\label{universal bound 1}
\dist(z,\sigma(H_0))^{(q-(d+1)/2)_+}\lesssim \|V\|_q^q
\end{align}   
for $q> d/2$ (or $q=1$ if $d=1$), while \eqref{DN higher dim} reads as
\begin{align}\label{universal bound 2}
1\lesssim\sup_{y\in\R^d}\int_{\R^d}|V(x)|^{\frac{d+1}{2}}\exp(-c|\im z||x-y|)\rd x
\end{align}
for some constant $c>0$ (we used that $\im\sqrt{z}\asymp |\im z|$ for $|z-1|$ small). Here, $\sigma(H_0)$ denotes the spectrum of $H_0$. The bounds \eqref{universal bound 1}--\eqref{universal bound 2} are \emph{universal} in the sense that they are essentially independent of the specific form of $h_0$, in a sense that we will make precise now. 

In the following, we always assume that the spectral parameter is $z=\lambda+\I\epsilon$, with $\lambda\in\sigma(H_0)$ and $|\epsilon|\leq 1$. Then $\dist(z,\sigma(H_0))=|\epsilon|$. One could use the Phragm\'en-Lindel\"of maximum principle to extend the results to the region $|\epsilon|\geq 1$ (see e.g.\ \cite[Appendix A]{MR3608659}, \cite{MR4150258}, \cite{Ruiz2002}), but we will not pursue this.
 
We assume that $h_0$ is a tempered distribution that is smooth near the foliation (see \eqref{Fermi surface} for the definition of $S_{\lambda}$)
\begin{align}\label{foliation}
S=\bigcup_{\lambda\in I}S_{\lambda}=\set{\xi\in\R^d}{h_0(\xi)\in I}.
\end{align}
Here $I\subset\R$ is a fixed compact subset of the set
\begin{align*}
\set{\lambda\in \R}{\nabla h_0(\xi)\neq 0 \mbox{ for all } \xi\in\R^d \mbox{ such that } h_0(\xi)=\lambda}
\end{align*}
of regular values of $h_0$. We assume that $S_{\lambda}$ is compact and has everywhere non-vanishing curvature for each $\lambda\in I$. The following lemma was proved e.g.\ in \cite[Lemma 3.3]{MR3608659}. It is closely related to the Stein--Tomas theorem for the Fourier restriction operator.
%
\begin{lemma}\label{lemma ST}
Let $\eta$ be a bump function. Then
\begin{align*}
\sup_{\stackrel{\lambda\in I}{|\epsilon|\leq 1}}\|\eta(D)[h_0(D)-(\lambda+\I\epsilon)]^{-1}\|_{p\to p'}\lesssim 1.
\end{align*}
\end{lemma}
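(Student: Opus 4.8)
The plan is to reduce the operator norm estimate to a Fourier restriction–extension (Stein--Tomas) estimate on the compact, curved hypersurfaces $S_\lambda$, uniformly in $\lambda\in I$ and $|\epsilon|\le 1$. First I would localize: since $\eta$ is a bump function, $\eta(D)$ restricts the analysis to a bounded region of frequency space, and by a smooth partition of unity subordinate to a finite cover of $\supp\eta$ I may assume the symbol is supported in a small neighborhood of a single point $\xi^0$ with $h_0(\xi^0)=\lambda_0\in I$. Near such a point, since $\nabla h_0(\xi^0)\neq 0$, the implicit function theorem gives the factorization $h_0(\xi)-\lambda=e(\xi)(\xi_1-a_\lambda(\xi'))$ with $e$ smooth and nonvanishing, so after a smooth change of variables the multiplier $[h_0(D)-(\lambda+\I\epsilon)]^{-1}$ becomes (up to a harmless bounded factor $e^{-1}$) $[\,\xi_1-a_\lambda(\xi')-\I\epsilon'\,]^{-1}$ with $\epsilon'\asymp\epsilon$, i.e.\ the resolvent of a first-order operator whose characteristic variety is the graph $\{\xi_1=a_\lambda(\xi')\}$, a piece of $S_\lambda$.

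Next I would invoke the limiting absorption structure: the kernel of $[\,\xi_1-a_\lambda(\xi')-\I\epsilon'\,]^{-1}$ is, by the Fourier inversion in $\xi_1$, supported in $\pm x_1\ge 0$ and given by an oscillatory integral $\int e^{\I(x_1 a_\lambda(\xi')+x'\cdot\xi')}\,m(\xi')\,\rd\xi'$ against a compactly supported amplitude; since $S_\lambda$ has everywhere nonvanishing Gaussian curvature, the graphing function $a_\lambda$ has Hessian of full rank, so stationary phase gives the decay $|x|^{-(d-1)/2}$ for this oscillatory integral in the relevant regime. This is exactly the decay rate underlying the Stein--Tomas exponent. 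The $p\to p'$ boundedness with $p=2(d+1)/(d+3)$ then follows from the standard $T^*T$ argument / real interpolation between the trivial $L^1\to L^\infty$ bound coming from $|x|^{-(d-1)/2}$ decay and the $L^2$ estimate, or more directly by quoting the Stein--Tomas theorem for the extension operator associated to $S_\lambda$ together with the identity expressing $\im[h_0(D)-(\lambda+\I\epsilon)]^{-1}$ in terms of $F_{S_\lambda}^*F_{S_\lambda}$ plus a bounded remainder.

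The main point to be careful about is \emph{uniformity}: all the implied constants must be controlled uniformly for $\lambda$ in the compact set $I$ and for $|\epsilon|\le 1$ (including $\epsilon=0$). This is where compactness of $I$ is used — the factorization data $e,a_\lambda$, the lower bound on $|\nabla h_0|$, and the lower bound on the curvature of $S_\lambda$ all vary continuously in $\lambda$, so on the compact set $I$ they are bounded above and below, and the finite subcover in the localization step can be chosen uniformly. For the $\epsilon$-dependence, the worst case is $\epsilon\to 0$, which is precisely the Stein--Tomas endpoint; for $\epsilon$ bounded away from $0$ the resolvent is even better behaved (the pole is pushed off the real axis) and the bound is immediate. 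Since the statement is asserted to have been proved in \cite[Lemma 3.3]{MR3608659}, I would either cite that directly or sketch the above reduction and then appeal to the uniform Stein--Tomas estimate on curved hypersurfaces; the only genuinely nontrivial ingredient is the uniform curvature-based decay estimate, which I would obtain from a standard stationary phase lemma applied to the graphing functions $a_\lambda$.
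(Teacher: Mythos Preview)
Your sketch is correct and aligns with the paper's treatment: the paper does not actually give its own proof of this lemma but simply cites \cite[Lemma~3.3]{MR3608659}, exactly as you suggest doing. The localization/factorization/Stein--Tomas route you outline is the standard one (and is the same machinery the paper deploys in the more refined kernel estimate of Lemma~\ref{lemma DN}), so there is nothing to add.
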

The setup \eqref{more gen. Schroedinger op.} 
generalizes \eqref{gen. Schroedinger op.} 
in that we allow $V(x,D)$ to be a pseudodifferential operator. We assume that its Kohn--Nirenberg symbol $V(x,\xi)$ is smooth in the fibre variable $\xi$, but we don't assume smoothness in $x$. More precisely, assume that
\begin{align}\label{symbol class}
C_{q,\Omega,N}(V):=\sum_{|\alpha|\leq N}\sup_{\xi\in\Omega}\|\partial_{\xi}^{\alpha}V(\cdot,\xi)\|_q<\infty
\end{align}
for some sufficiently large $N$ ($N>d$ would suffice) and some pre-compact subset $\Omega\subset\R^d$ such that $S\Subset \Omega$. 
The condition~\eqref{symbol class} is the natural generalization of $V\in L^{q}(\R^d)$ and reduces to the latter if $V$ is a potential. The order of $V(x,\xi)$ will not play a role here since we will always localize in Fourier space. In fact, we place ourselves in the following abstract setting:
Let $\eta,\overline{\eta}$ be bump functions supported on $\Omega$ such that $\eta^2+\overline{\eta}^2=1$ in a neighborhood of $S$. Define
\begin{align*}
H_V^{\eta}&:=h_0(D)+\eta(D) V\eta(D): \Ran P\to \Ran P,\\
H_V^{\overline{\eta}}&:=h_0(D)+\overline{\eta}(D)V\overline{\eta}(D): \Ran\overline{P}\to\Ran\overline{P},
\end{align*}
where $P=1_{\Omega}(D)$, $\overline{P}=1-P$.
In order to avoid imposing global conditions on $h_0$ and $V$ in $\xi$ we make the following assumption:
\begin{align}\label{black box}
I\subset \rho(H_V^{\overline{\eta}}),\quad c_{q,I}:=\sup_{\lambda\in I,\,|\epsilon|\leq 1}\|[H_V^{\overline{\eta}}-(\lambda+\I\epsilon)]^{-1}\|_{p\to p'}<\infty.
\end{align}
Here $p$ is uniquely determined by $q^{-1}=p^{-1}-(p')^{-1}$, $\rho(\cdot)$ denotes the resolvent set and $\|\cdot\|_{p\to p'}$ denotes the $L^{p}\to L^{p'}$ norm. In most applications, \eqref{black box} can easily be proved by standard elliptic estimates (see Subsection \ref{subsect. Proof of Theorem 4} for an example). This usually requires that $q\geq q_0$ for some $q_0\geq 1$ depending on $h_0$. We will ignore this and simply use \eqref{black box} as a black box assumption. Hence, we only assume that $q\geq 1$ in the following. In the proofs of Theorems \ref{thm universal bound 1 psdo}--\ref{thm universal bound 2 psdo} below we will make use of the \textit{smooth Feshbach-Schur map} \cite{MR2409163}. The latter is defined by $(H_V-z)\mapsto F_{\eta}(z)$, where
\begin{align*}
F_{\eta}(z):=H_V^{\eta}-z-\eta V\overline{\eta}
[H_V^{\overline{\eta}}-z]^{-1}\overline{\eta} V\eta: \Ran P\to \Ran P.
\end{align*} 
Here and in the following we abbreviate $\eta=\eta(D)$ and $\overline{\eta}=\overline{\eta}(D)$.
Theorem 1 in \cite{MR2409163} (with $V=\Ran P$ there) asserts that $H_V-z$ is invertible if and only if $F_{\eta}(z)$ is invertible, and that
\begin{align*}
[H_V-z]^{-1}=Q[F_{\eta}(z)]^{-1}Q^{\#}+\overline{\eta}[H_V^{\overline{\eta}}-z]^{-1}\overline{\eta},
\end{align*}
where
\begin{align*}
Q&:=\eta-\overline{\eta}[H_V^{\overline{\eta}}-z]^{-1}\overline{\eta}V\eta: \Ran P\to L^2(\R^d),\\
\quad
Q^{\#}&:=\eta-\eta V\overline{\eta}[H_V^{\overline{\eta}}-z]^{-1}\overline{\eta}: L^2(\R^d)\to \Ran P.
\end{align*}

\begin{prop}\label{thm universal bound 1 psdo}
Assume $H_V$ is as above and that \eqref{symbol class}, \eqref{black box} hold. Then
\begin{align}\label{universal bound 1 psdo}
\dist(z,\sigma(H_0))^{(q-(d+1)/2)_+}\lesssim C_{q,\Omega,N}(V)^q
\end{align} 
holds for every eigenvalue $z=\lambda+\I\epsilon$ of $H_V$, $\lambda\in I$, $|\epsilon|\leq 1$, with implicit constant depending on $h_0,d,q,I,|\Omega|$, but not on $z$, $V$.
\end{prop}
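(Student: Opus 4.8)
The plan is to run a Birman--Schwinger argument on the Feshbach--Schur reduced operator $F_{\eta}(z)$ rather than on $H_V-z$ itself, since the former lives on $\Ran P$ where all the Fourier localization is honest. First I would observe that, by the cited property of the smooth Feshbach--Schur map, an eigenvalue $z=\lambda+\I\epsilon$ of $H_V$ forces $F_{\eta}(z)$ to be non-invertible on $\Ran P$; moreover the assumption~\eqref{black box} guarantees that the ``error'' piece $\overline\eta[H_V^{\overline\eta}-z]^{-1}\overline\eta$ and the operators $Q$, $Q^{\#}$ are all under control in the relevant $L^p\to L^{p'}$ norms. So the task reduces to a lower bound: if $F_\eta(z)$ has nontrivial kernel then $C_{q,\Omega,N}(V)^q$ is bounded below by $\dist(z,\sigma(H_0))^{(q-(d+1)/2)_+}=|\epsilon|^{(q-(d+1)/2)_+}$.

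Next I would write $F_\eta(z)=(h_0(D)-z)\restrict\Ran P + R(z)$, where $R(z)=\eta V\eta-\eta V\overline\eta[H_V^{\overline\eta}-z]^{-1}\overline\eta V\eta$ collects all the potential-dependent terms. Factoring out the free part, non-invertibility of $F_\eta(z)$ is equivalent to $-1\in\sigma\big((h_0(D)-z)^{-1}\eta\, R(z)\,\cdots\big)$ after a symmetric Birman--Schwinger-type splitting of $R(z)$ into ``$\sqrt{|V|}$-type'' factors; the cleanest route is probably to test the kernel equation $F_\eta(z)u=0$ against a suitable function and bound $\|\eta u\|$ from below and above. In either formulation the core inequality to exploit is the Stein--Tomas/Kenig--Ruiz--Sogge bound of Lemma~\ref{lemma ST} when $q\leq(d+1)/2$ (giving the trivial power, i.e.\ the right-hand side of~\eqref{universal bound 1 psdo} with exponent $0$), and its non-uniform long-range counterpart
\begin{align*}
\|\eta(D)[h_0(D)-(\lambda+\I\epsilon)]^{-1}\|_{p\to p'}\lesssim |\epsilon|^{\frac{d+1}{2q}-1}
\end{align*}
when $q>(d+1)/2$, which one obtains from Lemma~\ref{lemma ST} by real interpolation with the trivial $L^2\to L^2$ bound $|\epsilon|^{-1}$ (or directly from the stationary-phase analysis of the foliation $S$, using that each $S_\lambda$ has non-vanishing curvature). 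Combining this with Hölder, $\|\eta V\eta\|_{p\to p'}\lesssim C_{q,\Omega,N}(V)$ (the symbol version of the potential Hölder estimate, where the $\partial_\xi^\alpha$ derivatives in~\eqref{symbol class} absorb the pseudodifferential nature of $V(x,D)$), and the black-box bound~\eqref{black box} to dispatch the quadratic-in-$V$ term and the $Q$, $Q^\#$ dressings, one reads off
\begin{align*}
1\lesssim \|(h_0(D)-z)^{-1}\eta\|_{p'\to p'}\cdots \lesssim |\epsilon|^{\frac{d+1}{2q}-1}C_{q,\Omega,N}(V),
\end{align*}
and raising to the $q$-th power gives exactly $|\epsilon|^{q-(d+1)/2}\lesssim C_{q,\Omega,N}(V)^q$ in the long-range regime, and the vacuous statement otherwise.

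I expect the main obstacle to be \emph{organizing the Feshbach--Schur error terms cleanly}: one must verify that the quadratic term $\eta V\overline\eta[H_V^{\overline\eta}-z]^{-1}\overline\eta V\eta$ and the operators $Q$, $Q^\#$ do not destroy the Birman--Schwinger structure, i.e.\ that they contribute only lower-order (or comparable) terms bounded by $C_{q,\Omega,N}(V)^2\cdot c_{q,I}$ and $C_{q,\Omega,N}(V)\cdot c_{q,I}$ respectively, so that for $C_{q,\Omega,N}(V)$ small the free resolvent term genuinely dominates. A secondary technical point is the pseudodifferential Hölder estimate: bounding $\|\eta(D) V(x,D)\eta(D)\|_{p\to p'}$ by $C_{q,\Omega,N}(V)$ requires writing $\eta(D)V(x,D)\eta(D)$ as an integral operator with kernel controlled by $\|\partial_\xi^\alpha V(\cdot,\xi)\|_q$ after integrating the oscillatory factor by parts in $\xi$ — routine but where the hypothesis~\eqref{symbol class} with $N>d$ is actually used. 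Everything else is interpolation and bookkeeping.
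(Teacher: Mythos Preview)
Your proposal is correct and follows essentially the same approach as the paper: Feshbach--Schur reduction to $F_\eta(z)$, writing it as the free part plus an effective potential $\eta\widetilde{V}_z\eta$, then combining the Stein--Tomas resolvent bound (Lemma~\ref{lemma ST}) with its interpolated long-range version and the pseudodifferential H\"older estimate (which the paper isolates as a separate lemma immediately after the proof). The paper streamlines slightly by never invoking a Birman--Schwinger factorization of $R(z)$ or the operators $Q$, $Q^{\#}$: it simply bounds $\|\eta\widetilde{V}_z\eta[h_0(D)-z]^{-1}\|_{p\to p}$ directly, runs a Neumann series in $L^p$, and then appeals to $p$-independence of the spectrum to transfer invertibility to $L^2$.
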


\begin{proof}
By compactness of $I$, it suffices to prove \eqref{universal bound 1 psdo} at $\re z=\lambda$ for a single $\lambda\in I$.
We first consider the case $q\leq (d+1)/2$.
Then \eqref{universal bound 1 psdo} is equivalent to the statement that if $C_{q,\Omega,N}(V)$ is sufficiently small, then $z$ is not an eigenvalue of $H_V$. We will show that $H_V-z$ is invertible using the smooth Feshbach-Schur map.
We write 
\begin{align}\label{Feta}
F_{\eta}(z)=h_0(D)-z+\eta\widetilde{V}_z\eta,\quad
\widetilde{V}_z:= V- V\overline{\eta}[H_V^{\overline{\eta}}-z]^{-1}\overline{\eta} V.
\end{align}
By Lemma \ref{lemma Hoelder} below and \eqref{black box},
\begin{align*}
\|\eta\widetilde{V}_z\|_{p'\to p}&\lesssim  \|\eta V\|_{p'\to p}+\|[H_V^{\overline{\eta}}-z]^{-1}\|_{p\to p'}\|\eta V\|_{p'\to p}\|\overline{\eta}V\|_{p'\to p}\\
&\lesssim_{|\Omega|} C_{q,\Omega,N}(V)+c_{q,I} C_{q,\Omega,N}(V)^2=:D_{I,q,\Omega,N},
\end{align*}
where the $L^p$ boundedness of $\eta,\overline{\eta}$ is a consequence of the Mikhlin multiplier theorem \cite[Th. 5.2.7]{MR3243734}. 
By Lemma \ref{lemma ST} we conclude that
\begin{align*}
\|\eta\widetilde{V}_z\eta[h_0(D)-z]^{-1}\|_{p\to p}\leq \|\eta\widetilde{V}_z\|_{p'\to p}\|\eta[h_0(D)-z]^{-1}\|_{p\to p'}\lesssim D_{I,q,\Omega,N},
\end{align*}
and hence, by a geometric series argument, that $F_{\eta}(z)$ is boundedly invertible in $L^p(\R^d)$ if $C_{q,\Omega,N}(V)\ll 1$. Since the spectrum is independent of $p$ (see e.g.\ \cite[Th. 14.3.10]{MR2359869}\footnote{The statement there is given in one dimension. However, the proof only uses general facts about $L^p$ spaces, valid in any dimension.}) it follows that $0$ is not in the $L^2$ spectrum of $F_{\eta}(z)$, or equivalently, that $z$ is not in the spectrum of $H_V$.

To prove the claim for $q>(d+1)/2$ we follow \cite{MR3717979} and interpolate the bound of Lemma \ref{lemma ST} (for $q=(d+1)/2$) with the trivial estimate
\begin{align*}
\|[h_0(D)-z]^{-1}\|_{2\to 2}\leq \dist(z,\sigma(H_0))^{-1},
\end{align*}
which yields
\begin{align*}
\|\eta[h_0(D)-z]^{-1}\|_{p\to p'}\lesssim |\epsilon|^{\frac{d+1}{2q}-1}.
\end{align*}
Repeating the previous argument, we find that $z$ cannot be an eigenvalue if the quantity $b:=D_{I,q,\Omega,N}|\epsilon|^{\frac{d+1}{2q}-1}$ is too small; in other words, if $z$ is an eigenvalue, then we must have $b\gtrsim 1$. If we set $a:=C_{q,\Omega,N}(V)$, then this means that we must have
$|\epsilon|^{1-\frac{d+1}{2q}}\lesssim a+c_{q,I} a^2$.
Since we are assuming $|\epsilon|\leq 1$, this is always satisfied if $a\geq 1/c_{q,I}$. If $a\leq 1/c_{q,I}$, then the condition becomes $|\epsilon|^{1-\frac{d+1}{2q}}\lesssim a$, in which case \eqref{universal bound 1 psdo} holds.
\end{proof}

In the above proof we used the following generalization of Hölder's inequality.

\begin{lemma} \label{lemma Hoelder}
Assume that $V$ satisfies \eqref{symbol class} and $\eta$ is a bump function supported on $\Omega$. Then, for $N$ sufficiently large,
\begin{align*}
\|V(x,D)\eta(D)\|_{p'\to p}+\|\eta(D)V(x,D)\|_{p'\to p}\lesssim |\Omega| C_{q,\Omega,N}(V)
\end{align*}
whenever $q^{-1}=p^{-1}-(p')^{-1}$.
\end{lemma}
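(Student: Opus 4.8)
The plan is to reduce everything to a single Schur-test / kernel estimate on the pieces $V(x,D)\eta(D)$ and $\eta(D)V(x,D)$, using that the Fourier multiplier $\eta(D)$ has a Schwartz kernel and that the symbol $V(x,\xi)$ is $L^q$ in $x$ uniformly in $\xi$ together with finitely many $\xi$-derivatives. First I would observe that by duality it suffices to bound $\|\eta(D)V(x,D)\|_{p'\to p}$, since $(\eta(D)V(x,D))^{\ast}$ is, up to complex conjugation of symbols, of the same type $\widetilde V(x,D)\eta(D)$ with $\widetilde V$ satisfying the same bound \eqref{symbol class}; alternatively one treats the two terms in parallel by the same argument. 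Next, since $q^{-1}=p^{-1}-(p')^{-1}$ means $p\geq q\geq p'$ (dual to $q$ in the sense $1/p'+1/q=1+1/p$... more precisely $p'\leq q\leq p$), I would factor the desired $L^{p'}\to L^p$ bound through $L^2$ or directly through a Young/Hölder-type estimate: writing the operator kernel, one sees that $V(x,D)$ acts by an integral kernel whose dependence on the ``off-diagonal'' variable is governed by the inverse Fourier transform in $\xi$ of $V(x,\xi)$ (localized by $\eta$), which by \eqref{symbol class} and $N>d$ derivatives lies in $L^q_x$ with integrable (rapidly decaying) weight in the off-diagonal variable.

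Concretely, the key step is to write
\begin{align*}
\eta(D)V(x,D)f(x)=\int_{\R^d} k(x,x-y)f(y)\,\rd y,
\end{align*}
where, after inserting the definitions and integrating by parts $N$ times in $\xi$,
\begin{align*}
|k(x,w)|\lesssim \langle w\rangle^{-N}\sum_{|\alpha|\leq N}\int_{\Omega}|\partial_\xi^\alpha V(x',\xi)|\,\rd\xi
\end{align*}
for an appropriate $x'$ near $x$ (one has to be slightly careful that the $x$-dependence and the convolution structure intertwine through the $\eta(D)$ on the left, but the Schwartz kernel of $\eta(D)$ is itself $\langle\cdot\rangle^{-N}$-bounded, so a further convolution does no harm). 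Then the operator norm $L^{p'}\to L^p$ is controlled, via Minkowski's integral inequality and Young's inequality, by $\|\langle\cdot\rangle^{-N}\|_{L^r}$ (for the appropriate $r$ with $1+1/p=1/p'+1/r$, i.e. $r=q$ ... actually $1/r = 1-1/q$, bounded since $N>d$) times $\sup_\xi\sum_{|\alpha|\le N}\|\partial_\xi^\alpha V(\cdot,\xi)\|_q$, and the $\xi$-integration over the precompact set $\Omega$ contributes the factor $|\Omega|$. This gives exactly $|\Omega|\,C_{q,\Omega,N}(V)$.

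I expect the main obstacle to be bookkeeping rather than a genuine difficulty: namely, handling the fact that in $\eta(D)V(x,D)$ the frequency cutoff sits on the \emph{left}, so the natural kernel is $\widehat{\eta}\ast_w [\text{(partial transform of }V)](x,\cdot)$ evaluated with a shift in the $x$-slot, and one must check that convolving a rapidly decaying kernel against an $L^q_x$-valued rapidly decaying kernel still yields something controlled by $C_{q,\Omega,N}(V)$ with only a $|\Omega|$ loss; this is where one uses that $L^q$ is translation invariant and that $\langle\cdot\rangle^{-N}\in L^1$ for $N>d$. A cleaner alternative, which I would actually carry out, is to bound $\|\eta(D)V(x,D)\|_{p'\to p}\le \|\eta(D)\|_{2\to p}\,\|V(x,D)\|_{p'\to 2}$ and estimate each factor: $\eta(D):L^2\to L^p$ is bounded because $\eta$ is compactly supported (Bernstein), with norm $\lesssim|\Omega|^{1/2-1/p}$, and $\|V(x,D)\|_{p'\to 2}$ is handled by the same integration-by-parts-in-$\xi$ kernel estimate as above but now only needing the $L^{p'}\to L^2$ mapping; since $p'\le 2$ this follows from Young plus Minkowski with the $\xi$-integral over $\Omega$ giving the remaining power of $|\Omega|$ and $C_{q,\Omega,N}(V)$ the $x$-regularity. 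Either route the dimensional count works out because $N>d$ makes all weight functions integrable; I would present the second, since it isolates the one nontrivial inequality ($L^{p'}\to L^2$ bound for $V(x,D)$) and makes the role of \eqref{symbol class} transparent.
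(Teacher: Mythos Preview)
Your first approach is essentially the paper's, but you have made life harder by attacking $\eta(D)V(x,D)$, where the cutoff sits on the left. In Kohn--Nirenberg quantization the symbol is evaluated at the \emph{output} variable, so it is $V(x,D)\eta(D)$ that has the clean kernel
\begin{align*}
k(x,x-y)=(2\pi)^{-d}\int_{\R^d}\e^{\I(x-y)\cdot\xi}V(x,\xi)\eta(\xi)\,\rd\xi,
\end{align*}
with the $\xi$-integration automatically restricted to $\Omega$ by $\eta$. The paper therefore treats this term directly (integration by parts in $\xi$, then Minkowski in $u=x-y$, then H\"older in $x$), and obtains the other summand by duality: $(\eta(D)V(x,D))^*=V(x,D)^*\eta(D)$ is a \emph{right}-quantized operator whose kernel $(2\pi)^{-d}\int\e^{\I(x-y)\cdot\xi}\overline{V(y,\xi)}\eta(\xi)\,\rd\xi$ again has the cutoff in the right place and obeys the identical bound. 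Your hedges ``for an appropriate $x'$ near $x$'' and ``a further convolution does no harm'' are precisely where the left-cutoff choice bites: writing $\eta(D)V(x,D)$ directly as $\check\eta\ast(V(\cdot,D)f)$ involves the kernel of the bare $V(x,D)$, i.e.\ an integral over \emph{all} $\xi$, on which \eqref{symbol class} gives no control. The remedy is simply to switch to $V(x,D)\eta(D)$ and run your Minkowski--H\"older argument there.

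The second route, which you say you would actually present, has a genuine gap. The factorization $\|\eta(D)V(x,D)\|_{p'\to p}\le\|\eta(D)\|_{2\to p}\,\|V(x,D)\|_{p'\to 2}$ asks for a bound on the bare operator $V(x,D)$, with no frequency localization, and \eqref{symbol class} says nothing about $V(x,\xi)$ for $\xi\notin\Omega$. Even in the model case where $V$ is a potential this factorization yields the wrong norm: H\"older gives $\|Vf\|_2\le\|V\|_{2q}\|f\|_{p'}$ (since $1/2-1/p'=1/(2q)$ under $q^{-1}=p^{-1}-(p')^{-1}$), not $\|V\|_q$, so the Bernstein--H\"older splitting through $L^2$ cannot recover $C_{q,\Omega,N}(V)$. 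Drop this route and present the kernel argument on $V(x,D)\eta(D)$.
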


\begin{proof}
By duality it suffices to estimate the first summand on the left. The kernel of $V(x,D)\eta(D)$ (recall that we use the Kohn--Nirenberg quantization) is given by
\begin{align*}
k(x,x-y)=(2\pi)^{-d}\int_{\R^d}\e^{\I (x-y)\cdot\xi}V(x,\xi)\eta(\xi)\rd\xi.
\end{align*}
Due to the cutoff $\eta$ the integral is restricted to $\xi\in \Omega$. Integration by parts shows that 
\begin{align*}
|k(x,u)|\lesssim_N \langle u\rangle^{-N}\int_{\Omega}\sum_{|\alpha|\leq N}|\partial_{\xi}^{\alpha}V(x,\xi)|\rd\xi,
\end{align*} 
and \eqref{symbol class}, together with Minkowski's inequality, then provides the estimate
\begin{align}\label{IBP bound k(x,x-y)}
\|k(x,u)\|_{L^q_x}\lesssim_N \langle u\rangle^{-N} |\Omega|C_{q,\Omega,N}(V).
\end{align}
Changing variables from $y$ to $u=x-y$ and using Minkowski's and Hölder's inequality, we get
\begin{align*}
&\|\int k(x,x-y)f(y)\rd y\|_{L^p_x}\leq \int\| k(x,u)f(x-u)\|_{L^p_x}\rd u
\leq \|f\|_{p'} \int \|k(x,u)\|_{L^q_x}\rd u
\end{align*}
and \eqref{IBP bound k(x,x-y)} yields the claimed inequality.

\end{proof}

\begin{rem}
If $V$ is a potential, then of course $\|V\eta(D)\|_{p'\to p}\lesssim \|V\|_q$, i.e.\ the factor $|\Omega|$ can be dispensed with. From this point of view, a more natural norm than \eqref{symbol class} would e.g.\ be 
\begin{align*}
\sup_{\xi\in\Omega}\|V(\cdot,\xi)\|_q+\sum_{1\leq |\alpha|\leq N}\int_{\Omega}\|\partial_{\xi}^{\alpha}V(\cdot,\xi)\|_q\, \rd \xi.
\end{align*}
However, we view $\Omega$ as fixed, which justifies our use of the simpler norm \eqref{symbol class}.
\end{rem}

We now establish a more precise version of Lemma \ref{lemma ST}. In the following we denote $R^{\eta,\zeta}_{\lambda,\epsilon}=\eta(D)[h_0(D)-(\lambda+\I\epsilon)]^{-\zeta}$ and also write $R^{\eta,\zeta}_{\lambda,\epsilon}(x-y)$ for its kernel.

\begin{lemma}\label{lemma DN}
Let $\eta$ be a bump function and $\zeta\in\C$, $0\leq \re\zeta\leq (d+1)/2$, $\epsilon\in [-1,1]$. Then we have the kernel bound
\begin{align}\label{decay Green}
\sup_{\lambda\in I}|R^{\eta,\zeta}_{\lambda,\epsilon}(x)|\lesssim_{N}\e^{C|\im\zeta|^2} \langle x\rangle^{-\frac{d-1}{2}+\re \zeta}\langle\epsilon x\rangle^{-N}
\end{align}
for some $C>0$.
\end{lemma}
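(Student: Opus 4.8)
The plan is to analyze the kernel $R^{\eta,\zeta}_{\lambda,\epsilon}(x)$ directly via the Fourier integral representation
\be
R^{\eta,\zeta}_{\lambda,\epsilon}(x)=(2\pi)^{-d}\int_{\R^d}\e^{\I x\cdot\xi}\,\eta(\xi)\,[h_0(\xi)-(\lambda+\I\epsilon)]^{-\zeta}\,\rd\xi,
\ee
and to extract the decay in $x$ from a stationary-phase analysis on the foliation $S$, exactly as in the Stein--Tomas setting but keeping careful track of the parameter $\zeta$ and the regularization $\epsilon$. First I would use the local factorization $h_0(\xi)-\lambda=e(\xi)(\xi_1-a(\xi'))$ near a point of $S_\lambda$ (after a rotation and a partition of unity subordinate to a finite cover of $\supp\eta$ by such charts), so that on each chart the symbol becomes $e(\xi)^{-\zeta}(\xi_1-a(\xi')-\I\epsilon/e(\xi))^{-\zeta}$ up to smooth factors. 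The $\xi_1$-integral is then a one-dimensional integral of the form $\int \e^{\I x_1 t}(t-\I\epsilon')^{-\zeta}\psi(t)\,\rd t$ with $|\epsilon'|\asymp|\epsilon|$; this is a classical computation giving a bound $\lesssim_N \e^{C|\im\zeta|^2}\langle x_1\rangle^{-1+\re\zeta}\langle\epsilon x_1\rangle^{-N}$ (the $\e^{C|\im\zeta|^2}$ coming from $|\Gamma(\zeta)^{-1}|$ and the analytic continuation of $t\mapsto t_+^{\zeta-1}$, the $\langle\epsilon x_1\rangle^{-N}$ from repeated integration by parts exploiting that the singularity sits at distance $|\epsilon'|$ off the real axis).

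Second, after performing the $\xi_1$-integral, one is left with an oscillatory integral in $\xi'\in\R^{d-1}$ whose phase is $x\cdot\xi$ restricted to the surface $\{\xi_1=a(\xi')\}$, i.e. $x'\cdot\xi'+x_1 a(\xi')$, against an amplitude that is smooth and compactly supported but now depends on $\zeta$ and on $x_1$ through the previous step. The non-vanishing Gauss curvature hypothesis means the Hessian $\partial^2_{\xi'}a$ is nondegenerate, so stationary phase in $d-1$ variables yields the gain $\langle x\rangle^{-(d-1)/2}$. I would run this via the standard non-stationary/stationary phase dichotomy: where $|x'|\gg |x_1|$ there is no critical point and one integrates by parts to get rapid decay, absorbed into $\langle x\rangle^{-(d-1)/2+\re\zeta}$ trivially; where $|x'|\lesssim |x_1|$ one has a nondegenerate critical point and the $(d-1)$-dimensional stationary phase asymptotic gives $|x|^{-(d-1)/2}$ times the amplitude evaluated at the critical point, which carries the factor $\langle x_1\rangle^{-1+\re\zeta}\langle\epsilon x_1\rangle^{-N}$ from step one. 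Combining and using $\langle x_1\rangle\asymp\langle x\rangle$ in this regime, together with $\langle\epsilon x_1\rangle^{-N}\gtrsim\langle\epsilon x\rangle^{-N}$ up to harmless factors, yields \eqref{decay Green}. The restriction $0\le\re\zeta\le(d+1)/2$ is exactly what makes the exponent $-(d-1)/2+\re\zeta$ lie in $[-(d-1)/2,1]$, so that no logarithmic corrections or worse singularities appear and the stationary-phase remainder terms are controlled; for $\re\zeta$ near the upper endpoint one must be slightly careful that the one-dimensional factor $\langle x_1\rangle^{-1+\re\zeta}$ can grow, but it is still dominated by $\langle x\rangle^{-(d-1)/2+\re\zeta}$.

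The main obstacle I anticipate is bookkeeping the dependence on $\zeta$ in the complex strip, in particular getting the clean exponential bound $\e^{C|\im\zeta|^2}$ rather than something worse: this requires treating $[h_0(\xi)-z]^{-\zeta}$ via the formula $w^{-\zeta}=\Gamma(\zeta)^{-1}\int_0^\infty \e^{-tw}t^{\zeta-1}\rd t$ (valid for $\re w>0$, here with $w=h_0(\xi)-z$ having the right sign of imaginary part after orienting $\epsilon$), using $|\Gamma(\zeta)^{-1}|\lesssim \e^{C|\im\zeta|^2}$ and Phragmén--Lindelöf/analytic-interpolation-type control of the $t$-integral, and then checking that the stationary-phase constants depend at most polynomially on $\zeta$ so they are swallowed by the exponential. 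A secondary technical point is ensuring uniformity in $\lambda\in I$: since $I$ is compact and $S=\bigcup_{\lambda\in I}S_\lambda$ is a smooth compact hypersurface with curvature bounded below, all the charts, the lower bound on $|e|$, and the lower bound on $|\partial^2_{\xi'}a|$ can be taken uniform, so the implicit constants do not degenerate. Once these two points are handled, the estimate follows by assembling the finitely many chart contributions.
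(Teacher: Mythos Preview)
Your overall strategy---localize by a partition of unity, factor $h_0-\lambda = e(\xi)(\xi_1-a(\xi'))$, do the $\xi_1$-integral, then apply stationary phase in $\xi'$---mirrors the paper's approach closely. The difference lies in how the $\xi_1$-integral is handled, and there you have glossed over precisely the point the paper singles out as the chief difficulty. When you write the symbol as $e(\xi)^{-\zeta}(\xi_1-a(\xi')-\I\epsilon/e(\xi))^{-\zeta}$ and then assert that the $\xi_1$-integral has the form $\int\e^{\I x_1 t}(t-\I\epsilon')^{-\zeta}\psi(t)\,\rd t$ with $|\epsilon'|\asymp|\epsilon|$, you are treating $\epsilon'=\epsilon/e(\xi)$ as a constant in $\xi_1$; but $e(\xi)$ depends on $\xi_1$, so $\epsilon'$ depends on the integration variable. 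The paper flags this explicitly---the naive factorization ``does not work well for this since $q$ depends on $\xi_1$''---and instead appeals to a Malgrange-type preparation theorem of Koch--Tataru that produces a factorization $e(\xi)(h_0(\xi)-\lambda-\I\epsilon)=\xi_1+a(\xi')+\I\epsilon b(\xi')$ with $b$ \emph{genuinely independent of $\xi_1$}. Once that is in place, the parametrix in $x_1$ is the explicit operator-valued kernel $\chi_-^{\zeta-1}(x_1)\,\e^{\epsilon x_1 b(D')}\e^{-\I x_1 a(D')}$, and a single application of stationary phase for complex-valued phases (H\"ormander, Th.~7.7.5) gives the bound directly, with the $\Gamma$-function factor in $\chi_-^{\zeta-1}$ supplying the $\e^{C|\im\zeta|^2}$.

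Your direct route may still be salvageable for the polynomial weight $\langle\epsilon x\rangle^{-N}$ claimed in the lemma: the symbol estimates $|\partial_t^k[(t-\I\epsilon g(t))^{-\zeta}]|\lesssim_k (|t|+\epsilon)^{-\re\zeta-k}$ continue to hold when $g$ depends smoothly on $t$, so repeated integration by parts formally delivers the $\langle\epsilon x_1\rangle^{-N}$ gain. But two further checks are then required which your sketch omits. First, the resulting amplitude $A(x_1,\xi')$ must have $\xi'$-derivatives bounded uniformly in $x_1$ for the $(d-1)$-dimensional stationary phase to yield a clean $|x_1|^{-(d-1)/2}$; each $\partial_{\xi'}$ hitting the singular factor raises the order of the pole, and while the extra $\epsilon$ it carries should compensate, this needs to be argued. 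Second, for $\re\zeta\geq 1$ the $\xi_1$-integral is not absolutely convergent, so your ``classical computation'' must be made precise via a distributional or analytic-continuation argument. The paper's route through the explicit parametrix $K_\zeta$ sidesteps both issues simultaneously.
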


\begin{proof} 

Again, it suffices to prove this for a fixed $\lambda$.
We absorb $\lambda$ into the symbol, i.e.\ we consider $p(\xi)=h_0(\xi)-\lambda$. By a partition of unity and a linear change of coordinates we may assume that, locally near an arbitrary point of $\Omega$, either $p\neq 0$ or $\partial p/\partial \xi_1>0$. In the case $p\neq 0$ we get the stronger bound
\begin{align*}
|R^{\eta,\zeta}_{\lambda,\epsilon}(x)|\lesssim_N \langle x-y\rangle^{-N}.
\end{align*}

We turn to the case $\partial p/\partial \xi_1>0$ and consider $\zeta=1$ first (i.e.\ the resolvent).
By the implicit function theorem, $\{p(\xi)=0\}$ is then the graph of a smooth function $\xi_1=a(\xi')$, and we have the factorization
\begin{align}\label{factorization 1}
(p(\xi)-i\epsilon)^{-1}=(\xi_1-a(\xi')-i\epsilon q(\xi))^{-1}q(\xi)
\end{align}
where $q(\xi)=(\xi_1-a(\xi'))/p(\xi)>0$, see e.g.\  \cite[Section 14.2]{MR705278}, \cite[Lemma 3.3]{MR3608659} and \cite[Section 3.1]{MR4009459}. There it is sufficient to work with the limiting distributions corresponding to $\epsilon=0\pm$, which would yield \eqref{decay Green} in this case. Here we need to keep $\epsilon$ fixed to get the desired decay for nonzero $\epsilon$. In the following we assume that $\epsilon>0$; the case $\epsilon<0$ is similar.
The factorization \eqref{factorization 1} does not work well for this since $q$ depends on $\xi_1$. To remedy this problem, we follow the approach of Koch--Tataru \cite{MR2094851}, albeit in the much simpler setting of constant coefficients. Lemma 3.8 in \cite{MR2094851} provides the alternative factorization
\begin{align}\label{factorization 2}
e(\xi)(p(\xi)-i\epsilon)=\xi_1+a(\xi')+i\epsilon b(\xi'),
\end{align}
where $e$ is elliptic ($e\neq 0$) and $a,b$ are real-valued. This is a version of the Malgrange preparation theorem \cite[Th. 7.5.5]{MR1065993} or the classical Weierstrass preparation theorem \cite[Th. 7.5.1]{MR1065993} in the analytic case. We appeal to \cite{MR2094851} because it makes the dependence on $\epsilon$ explicit. Note that the imaginary part $b$ is now independent of $\xi_1$. The symbols $e,a,b$ can be found by iteratively solving a system of algebraic equations and using Borel resummation of the resulting formal series (see \cite[Lemma 3.9 and 3.10]{MR2094851}). Moreover, $a,b$ have asymptotic expansions in powers of $\epsilon$, while $e$ has an asymptotic expansions in powers of $\epsilon$ and $\xi_1$. We will only need the first term $b_1$ in the expansion of $b$. Changing variables $\xi\to \xi_1+a(\xi')$ we are reduced to $p(\xi)=\xi_1+i\epsilon q(\xi)$ for some real-valued function $q$. By the proof of \cite[Lemma 3.9]{MR2094851} we have $b_1=1/(1+q_1^2)$, where $q_1=\partial_{\xi_1}q|_{\xi_1=0}$. Therefore, $b\geq c$ on the closure of $\Omega$ for some constant $c>0$ (we used compactness and the smallness of $\epsilon$). Since we have constant coefficients, the simple parametrix (5.5) in \cite{MR2094851}, with (operator-valued) kernel $K(x_1-y_1)$, given by 
\begin{align}\label{parametrix}
K(x_1)=\mathbf{1}_{x_1<0}\,\e^{\epsilon x_1b(D')}e^{-ix_1 a(D')},
\end{align}
is exact, i.e.\ $(D_1+a(D')+\I \epsilon b(D'))K$ is the identity (we denote both the operator and the kernel by $K$ here). By the stationary phase estimate (for complex-valued phase functions) \cite[Th. 7.7.5]{MR1065993},
\begin{align*}
|K(x)|\lesssim \langle x\rangle^{-\frac{d-1}{2}} \e^{-c|\epsilon x|}+\mathcal{O}_N(\langle x\rangle^{-N}),
\end{align*}
Using the factorization \eqref{factorization 2} and extending $1/e$ globally as a Schwartz function, we obtain \eqref{decay Green} in the case $\zeta=1$. The case $\zeta\neq 1$ requires only minor modifications. The kernel in \eqref{parametrix}
is replaced by
\begin{align*}
K_{\zeta}(x_1)=\chi_-^{\zeta-1}(x_1)\e^{\epsilon x_1 b(D')}e^{-ix_1 a(D')},
\end{align*}
where $\chi_{-}^{w}(\tau):=1_{\tau<0}|\tau|^{w}/\Gamma(w+1)$, $w\in\C$, where $\Gamma$ is the usual Gamma function.
Then $(D_1+a(D')+\I \epsilon b(D'))^{-\zeta}K_{\zeta}$ is the identity. This follows immediately by applying the inverse Fourier transformation to the following identity (see \cite{MR1065993}, specifically the explanation after Example 7.1.17)
\begin{align*}
\mathcal{F}\left(\tau\mapsto \e^{-\delta \tau}\chi_+^{\zeta}(\tau\right)(\xi)=\e^{-i\pi(\zeta+1)/2}(\xi-\I\delta)^{-\zeta-1},\quad \delta>0,\quad \zeta\in\C.
\end{align*}
Again, by stationary phase,
\begin{align*}
|K_{\zeta}(x)|\lesssim \e^{C|\im\zeta|^2}(\langle x\rangle^{-\frac{d-1}{2}+\re \zeta} \e^{-c|\epsilon x|}+\mathcal{O}_N(\langle x\rangle^{-N}))
\end{align*}
for $0\leq \re\zeta\leq (d+1)/2$. The growth estimate in $|\im\zeta|$ comes from a standard estimate on the Gamma function (see e.g.\ \cite[Appendix A.7]{MR3243734}).
\end{proof}

To state an analog of the estimate \eqref{universal bound 2} for $H_V$ in \eqref{more gen. Schroedinger op.} 
we assume
\begin{align}\label{symbol class 2}
C_{\epsilon, q,\Omega,N}(V):=\sum_{|\alpha|\leq N}\sup_{\xi\in\Omega}\sup_{y\in\R^d}\|\langle\epsilon (x-y)\rangle^{-N}\partial_{\xi}^{\alpha}V(x,\xi)\|_{L^q_x}<\infty
\end{align}
for some sufficiently large $N$ (again, $N>d$ would work). 
The norm \eqref{symbol class 2} is the analog of the right hand side of~\eqref{universal bound 2}. We also replace \eqref{black box} by the new black box assumption
\begin{align}\label{black box 2}
I\subset \rho(H_V^{\overline{\eta}}),\quad c_{q,I,V}:=\sup_{\lambda\in I,\,|\epsilon|\leq 1}\|\eta V\overline{\eta}[H_V^{\overline{\eta}}-(\lambda+\I\epsilon)]^{-1}\|_{p\to p}<\infty,
\end{align}
where we recall that $q^{-1}=p^{-1}-(p')^{-1}$.  
Note that, in contrast to \eqref{black box}, the potential still appears in \eqref{black box 2} and thus we need a $p\to p$ norm here. In many applications of interest (for instance, in the proof of Theorem 4), \eqref{black box 2} can be estimated perturbatively in terms of $H_0^{\overline{\eta}}$, with an effective constant $c_{q,I,V}$ in \eqref{black box 2}, i.e. a constant only depending on $C_{\epsilon, q,\Omega,N}$, but not on $V$ itself (see Subsection \ref{subsect. Proof of Theorem 4}).

\begin{prop}\label{thm universal bound 2 psdo}
Assume that \eqref{symbol class 2}, \eqref{black box 2} hold for some $q\leq (d+1)/2$. Then every eigenvalue $z=\lambda+\I\epsilon$ of $H_V$, $\lambda\in I$, $|\epsilon|\leq 1$, satisfies
\begin{align}\label{universal bound 2 psdo}
1\lesssim C_{\epsilon,q,\Omega,N}(V)
\end{align} 
with implicit constant depending on $h_0,d,q,I,|\Omega|$, but not on $z$, $V$.
\end{prop}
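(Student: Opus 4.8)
The plan is to follow the template of the proof of Proposition~\ref{thm universal bound 1 psdo} in the case $q\leq(d+1)/2$, replacing Lemma~\ref{lemma ST} by the kernel bound of Lemma~\ref{lemma DN} (at $\zeta=1$, or more precisely at $\re\zeta=(d+1)/2$ split into two halves) and replacing the H\"older estimate of Lemma~\ref{lemma Hoelder} by a weighted version adapted to the norm \eqref{symbol class 2}. As before, it suffices by compactness of $I$ to argue at a fixed $\lambda\in I$, and \eqref{universal bound 2 psdo} is equivalent to the statement that $z$ cannot be an eigenvalue if $C_{\epsilon,q,\Omega,N}(V)$ is sufficiently small. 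We will show this via the smooth Feshbach--Schur map: by the discussion preceding Proposition~\ref{thm universal bound 1 psdo}, $H_V-z$ is invertible iff $F_\eta(z)=h_0(D)-z+\eta\widetilde V_z\eta$ is invertible, where $\widetilde V_z = V - V\overline\eta[H_V^{\overline\eta}-z]^{-1}\overline\eta V$, so it suffices to make $\|\eta\widetilde V_z\eta[h_0(D)-z]^{-1}\|_{p\to p}$ small.

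The key estimate is a weighted analog of Lemma~\ref{lemma Hoelder}: if $w_\epsilon(x):=\langle\epsilon x\rangle^{N}$, then I would show
\begin{align*}
\|w_\epsilon(x)^{-1}\,V(x,D)\,\eta(D)\|_{p'\to p}+\|\eta(D)\,V(x,D)\,w_\epsilon(x)^{-1}\|_{p'\to p}\lesssim_{|\Omega|} C_{\epsilon,q,\Omega,N}(V),
\end{align*}
and symmetrically with the weight on the other side, by exactly the argument in the proof of Lemma~\ref{lemma Hoelder}: the kernel of $V(x,D)\eta(D)$ is $k(x,x-y)=(2\pi)^{-d}\int_\Omega\e^{\I(x-y)\cdot\xi}V(x,\xi)\eta(\xi)\rd\xi$, integration by parts in $\xi$ gives $|k(x,u)|\lesssim_N\langle u\rangle^{-N}\int_\Omega\sum_{|\alpha|\leq N}|\partial_\xi^\alpha V(x,\xi)|\rd\xi$, and now Minkowski's inequality in the weighted space yields $\|w_\epsilon^{-1}k(\cdot,u)\|_{L^q_x}\lesssim_N\langle u\rangle^{-N}|\Omega|\,C_{\epsilon,q,\Omega,N}(V)$. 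Here one uses that $\langle\epsilon(x-y)\rangle^{-N}$ appearing in \eqref{symbol class 2} together with $w_\epsilon(x)^{-1}=\langle\epsilon x\rangle^{-N}$ is controlled, since for the operator norm one splits as $w_\epsilon(x)^{-1}=\langle\epsilon x\rangle^{-N}\lesssim\langle\epsilon y\rangle^{-N}\langle\epsilon(x-y)\rangle^{N}$, so the unweighted $k(x,u)$ paired with $\langle\epsilon y\rangle^{-N}$ on the function side and $\langle\epsilon(x-y)\rangle^{N}$ absorbed into the fast decay $\langle u\rangle^{-N}$ (taking $N$ larger) reduces matters to \eqref{symbol class 2}. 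Combining with the black box \eqref{black box 2}, which is precisely designed so that $\eta V\overline\eta[H_V^{\overline\eta}-z]^{-1}$ is bounded $p\to p$, one gets
\begin{align*}
\|w_\epsilon(x)^{-1}\eta\widetilde V_z\|_{p'\to p}\lesssim_{|\Omega|} C_{\epsilon,q,\Omega,N}(V)+c_{q,I,V}\,C_{\epsilon,q,\Omega,N}(V) =: D,
\end{align*}
and similarly with the weight on the right, $\|\eta\widetilde V_z w_\epsilon^{-1}\|_{p'\to p}\lesssim D$.

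The remaining point is to absorb the weights into the resolvent: Lemma~\ref{lemma DN} with $\zeta=1$ (so $\re\zeta=1\leq(d+1)/2$ when $d\geq1$; more robustly one splits $[h_0(D)-z]^{-1}$ using the bound at $\re\zeta=(d+1)/2$ against $\|\eta[h_0(D)-z]^{-1}\|_{2\to2}$ for the complementary range, as in the Stein--Tomas factorization) gives the kernel bound $|R^{\eta,1}_{\lambda,\epsilon}(x-y)|\lesssim\langle x-y\rangle^{-\frac{d-1}{2}+1}\langle\epsilon(x-y)\rangle^{-M}$ for any $M$. Crucially, the extra factor $\langle\epsilon(x-y)\rangle^{-M}$ dominates $w_\epsilon(x)w_\epsilon(y)=\langle\epsilon x\rangle^N\langle\epsilon y\rangle^N$ once $M$ is large enough, because $\langle\epsilon x\rangle\langle\epsilon y\rangle\lesssim\langle\epsilon(x-y)\rangle^2\langle\epsilon y\rangle^2$ does not immediately help — but pairing one weight with each factor is cleaner: write $R^{\eta,1}_{\lambda,\epsilon}=R^{\eta,1/2+\I t}_{\lambda,\epsilon}\circ(\text{stuff})$ is overkill, so instead I simply note that the Stein--Tomas--type bound of Lemma~\ref{lemma ST} upgrades, via the kernel decay of Lemma~\ref{lemma DN}, to
\begin{align*}
\|w_\epsilon(x)\,\eta(D)[h_0(D)-z]^{-1}\,w_\epsilon(y)\|_{p\to p'}\lesssim 1,
\end{align*}
since conjugating a $p\to p'$ bounded operator by $w_\epsilon$ on both sides costs at most $\sup_{x,y}w_\epsilon(x)w_\epsilon(y)|R(x-y)|/|R_0(x-y)|$-type factors that are absorbed by the gain $\langle\epsilon(x-y)\rangle^{-M}$; one runs the Stein--Tomas TT* argument with the weighted kernel directly, using $\langle\epsilon x\rangle^{N}\langle\epsilon y\rangle^{N}\langle\epsilon(x-y)\rangle^{-M}\lesssim 1$ for $M\geq 2N$. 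Then
\begin{align*}
\|\eta\widetilde V_z\eta[h_0(D)-z]^{-1}\|_{p\to p}\leq\|\eta\widetilde V_z w_\epsilon^{-1}\|_{p'\to p}\,\|w_\epsilon\eta[h_0(D)-z]^{-1}\|_{p\to p'}\lesssim D,
\end{align*}
and a geometric series argument shows $F_\eta(z)$ is boundedly invertible on $L^p$, hence (spectrum independent of $p$) $z\notin\sigma(H_V)$, whenever $C_{\epsilon,q,\Omega,N}(V)\ll1$. The main obstacle is the bookkeeping in this last step: one must insert the weight $w_\epsilon$ exactly where the decay $\langle\epsilon x\rangle^{-N}$ sits in \eqref{symbol class 2} and \eqref{black box 2}, and verify that the $\epsilon$-dependent gain in Lemma~\ref{lemma DN} genuinely beats the weight uniformly in $\epsilon\in[-1,1]$ and in the base point $y$; once the weighted Stein--Tomas bound is set up this is routine, but it is where care is needed.
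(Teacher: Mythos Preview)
Your overall Feshbach--Schur reduction is the same as the paper's, and the target inequality $\|\eta\widetilde V_z\eta[h_0(D)-z]^{-1}\|_{p\to p}\lesssim C_{\epsilon,q,\Omega,N}(V)$ is exactly right. But the route you propose to reach it has a genuine gap.

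The claimed weighted resolvent bound $\|w_\epsilon\,\eta[h_0(D)-z]^{-1}\,w_\epsilon\|_{p\to p'}\lesssim 1$ (and likewise the one-sided version you actually use at the end) is false. Your weight $w_\epsilon(x)=\langle\epsilon x\rangle^{N}$ is anchored at the origin, whereas the resolvent kernel depends only on $x-y$. The inequality $\langle\epsilon x\rangle^{N}\langle\epsilon y\rangle^{N}\langle\epsilon(x-y)\rangle^{-M}\lesssim 1$ that you invoke fails dramatically on the diagonal: take $x=y$ with $|\epsilon x|$ large, so the left side is $\langle\epsilon x\rangle^{2N}$. No amount of decay in $x-y$ from Lemma~\ref{lemma DN} can compensate growth in $x$ and $y$ \emph{separately}. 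This is not a bookkeeping issue; a convolution operator conjugated by a growing weight on both sides is never bounded. The same obstruction kills your weighted H\"older step in the form you need it: you only control $\|w_\epsilon^{-1}V\eta\|_{p'\to p}$ for the particular choice $y=0$ in \eqref{symbol class 2}, which throws away the supremum over $y$ that is the whole point of that norm.

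The paper handles this by Stein interpolation on the analytic family $K_\zeta:=V^{\zeta}\eta[h_0(D)-z]^{-\zeta}$. At $\re\zeta=0$ one has the trivial $L^2$ bound; at $\re\zeta=q\leq(d+1)/2$ the kernel bound of Lemma~\ref{lemma DN} gives $|R^{\eta,\zeta}_{\lambda,\epsilon}(x-y)|\lesssim\langle\epsilon(x-y)\rangle^{-N}$ (the power of $\langle x-y\rangle$ is nonnegative here and harmless), so
\[
\|K_\zeta\|_{1\to 1}\leq\sup_{y}\int|V(x)|^{q}\langle\epsilon(x-y)\rangle^{-N}\,\rd x\lesssim C_{\epsilon,q,\Omega,N}(V)^{q}.
\]
The weight $\langle\epsilon(x-y)\rangle^{-N}$ is now centered at the \emph{running} point $y$, which is exactly what matches the $\sup_y$ in \eqref{symbol class 2}. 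Interpolation then gives $\|K_1\|_{p\to p}\lesssim C_{\epsilon,q,\Omega,N}(V)$, i.e.\ precisely \eqref{to prove universal bound 2 psdo}, and the black box \eqref{black box 2} closes the argument as you described. The moral is that the extra $\epsilon$-decay in Lemma~\ref{lemma DN} must be paired with $V$ \emph{inside} a $\sup_y$, not split off as a fixed multiplicative weight; Stein interpolation is what makes this pairing happen at the correct Lebesgue exponents.
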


\begin{proof}
We again use the smooth Feshbach--Schur map. Thus the claim \eqref{universal bound 2 psdo} is equivalent to the statement
\begin{align*}
C_{\epsilon,q,\Omega,N}(V)\ll_{\lambda} 1\implies F_{\eta}(z)\mbox{ boundedly invertible},
\end{align*}
where $F_{\eta}(z)$ is given by \eqref{Feta}.
Again, by $p$-independence of the spectrum it suffices to prove invertibility in $L^p$, with $q^{-1}=p^{-1}-(p')^{-1}$, and this would follow (by geometric series) from 
\begin{align*}
\|\eta\widetilde{V}_z\eta[h_0(D)-z]^{-1}\|_{p\to p}<1,
\end{align*}
and this in turn would follow from 
\begin{align}\label{to prove universal bound 2 psdo}
\|V\eta[h_0(D)-z]^{-1}\|_{p\to p}\lesssim C_{\epsilon,q,\Omega,N}(V)
\end{align}
since then, by \eqref{Feta}, \eqref{black box 2}, \eqref{to prove universal bound 2 psdo},
\begin{align*}
\|\eta\widetilde{V}_z\eta[h_0(D)-z]^{-1}\|_{p\to p}&\lesssim (1+\|\eta V\overline{\eta}[H_V^{\overline{\eta}}-z]^{-1}\overline{\eta}\|_{p\to p})C_{\epsilon,q,\Omega,N}(V)\\
&\lesssim (1+c_{q,I,V})C_{\epsilon,q,\Omega,N}(V).
\end{align*}
To estimate \eqref{to prove universal bound 2 psdo}, let $k(x,x-y)$ be the kernel of $V\eta_1$ (where $\eta_1$ is a bump function like $\eta$, but with $\eta_1\eta=\eta$) 
and let $R_{\lambda,\epsilon}^{\eta}(x-y)$ be the kernel of $\eta[h_0(D)-z]^{-1}$. Then the kernel of $K:= V\eta[h_0(D)-z]^{-1}$ is 
\begin{align*}
K(x,y)=\int_{\R^d}k(x,x-u)R_{\lambda,\epsilon}^{\eta}(u-y)\rd u.
\end{align*}
As a warmup, we consider first the easiest case where $d=1$ and $V$ is a potential. Then $k(x,x-y)=V(x)\delta(x-y)$, and Lemma \ref{lemma DN} (with $\zeta=1$) yields
\begin{align}\label{K 1-1 norm}
\|K\|_{1\to 1}\leq \sup_y\int_{\R^d}|K(x,y)|\rd x\lesssim_{N} \sup_y\int_{\R^d}|V(x)|\langle\epsilon(x-y)\rangle^{-N}\rd x.
\end{align}
Comparing to \eqref{symbol class 2}, the right hand side is bounded by $C_{\epsilon,1,\Omega,N}(V)$, and hence if the latter is small, then $\|K\|_{1\to 1}<1$. When $V$ is no longer required to be a potential (but still in $d=1$), then the previous estimate is replaced by
\begin{align}\label{K 1-1 norm bis}
\sup_y\int_{\R^d}|K(x,y)|\rd x\lesssim \sup_y\int\int |k(x,u)|\langle\epsilon(x-u-y)\rangle^{-N}  \rd x \rd u,
\end{align}
where we first used the change of variables $u\to x-u$ and then Fubini. We insert $1=\langle u\rangle^{N}\langle u\rangle^{-N}$ and estimate the double integral by
\begin{align*}
C_N\sup_{y,u} \int_{\R^d}|k(x,u)|\langle u\rangle^N\langle\epsilon(x-u-y)\rangle^{-N}  \rd x
\end{align*}
where $C_N=\int \langle u\rangle^{-N}\rd u$. Then \eqref{IBP bound k(x,x-y)} and \eqref{symbol class 2}, together with the first inequality in~\eqref{K 1-1 norm}, yield $\|K\|_{1\to 1}\lesssim C_{\epsilon,1,\Omega,N}(V)$. Moving on to the general, higher-dimensional case, we use Stein interpolation on the analytic family $K_{\zeta}:=V^{\zeta}\eta[h_0(D)-z]^{-\zeta}$ to prove~\eqref{to prove universal bound 2 psdo}. For $\re\zeta=0$, we have the trivial bound
\begin{align*}
\|K_{\zeta}\|_{2\to 2}\lesssim \e^{c|\im\zeta|}\quad (\re\zeta=0).
\end{align*}
For $\re\zeta=q$, we use the estimate of Lemma \ref{lemma DN} to get \eqref{K 1-1 norm bis} for $K_{\zeta}$, i.e.
\begin{align*}
\|K_{\zeta}\|_{1\to 1}\lesssim \e^{c|\im\zeta|}C_{\epsilon,q,\Omega,N}(V)^q\quad (\re\zeta=q).
\end{align*}
Interpolating the last two estimates gives $\|K_1\|_{p\to p}\lesssim C_{\epsilon,q,\Omega,N}(V)$, which is just \eqref{to prove universal bound 2 psdo}, i.e.\ what we needed to prove.
\end{proof}

\subsection{Proof of Theorem 4}\label{subsect. Proof of Theorem 4}
By scaling, it suffices to prove the bounds for $|z|=1$ only. We only give a proof in the case $\re z\gtrsim 1$, which is the most difficult one. Note that the factor $\im z(x-y)$ is dimensionless as is should be. Hence we can take $I=[1-\delta,1+\delta]$ in \eqref{foliation} and find that $S=\{1-\delta'\leq |\xi|\leq 1+\delta'\}$ for some small positive constants $\delta,\delta'$. Recall that $\Omega\subset\R^d$ was chosen such that $S\Subset \Omega$ (see the paragraph after \eqref{symbol class}). This implies that $|h_0(\xi)-z|\geq C_{\Omega}+\langle\xi\rangle^s$ for all $\xi\in\R^d\setminus\Omega$ and hence, by Sobolev embedding, $$\|[H_0^{\overline{\eta}}-z]^{-1}\|_{p\to p'}\lesssim \|[H_0^{\overline{\eta}}-z]^{-1}\|_{H^{-\frac{\sigma}{2}}\to H^{\frac{\sigma}{2}}}\lesssim C_{\Omega}^{\frac{\sigma}{s}-1}$$ for $\sigma/d\geq p^{-1}-(p')^{-1}=q^{-1}$, where we denoted the $L^2$ based Sobolev space of order $\sigma$ by $H^{\sigma}$ and used Plancherel in the last inequality.  
We choose $\sigma=d/q_s$. 
If $C_{\Omega}^{\frac{\sigma}{s}-1}\|V\|_q \ll 1$, then \eqref{black box} holds by Hölder's inequality and a geometric series argument, and hence Proposition~\ref{thm universal bound 1 psdo} yields (i), (ii).

Moving on to the proof of (iii), the claim would follow from Proposition~\ref{thm universal bound 2 psdo} if we could show \eqref{black box 2}. For brevity, we restrict our attention to the case $s<d$. Precisely, we will show that $c_{q,I,V}<\infty$ if  
\begin{align}\label{asspt V Bessel}
\sup_{y\in\R^d}\int_{\R^d}\langle x-y\rangle^{-N}|V(x)|^{q}\rd x\ll 1,
\end{align}
where $N\gg 1$ and $q\geq d/s$.
Let us abbreviate the constant $c_{q,I,V}$ by $c_V$. We also set $c_0:=\|V\overline{\eta}[H_V^{\overline{\eta}}-z]^{-1}\|_{p\to p}$ (by compactness we can fix $z$). Without loss of generality assume that the inverse Fourier transform of $\eta$ is normalized in $L^1$, so that $\eta(D)$ is an isometry in $L^p$ (and similarly for $\overline{\eta}$). If we could prove $c_0<1$, then a geometric series argument would yield $c\leq c_0/(1-c_0)$ and we would be done. The next lemma establishes $c_0<1$.

\begin{lemma}
Let $s<d$. If \eqref{asspt V Bessel} holds with $q\geq d/s$, then $c_0<1$.
\end{lemma}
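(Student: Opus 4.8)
The plan is to strip off the perturbation $\overline{\eta}V\overline{\eta}$, reduce to a resolvent bound for the \emph{free} operator $H_0^{\overline{\eta}}=h_0(D)$ on $\Ran\overline P$, and then prove that bound by a unit‑cube decomposition that exploits the local smoothing of a Bessel‑type kernel. Write $\delta:=\sup_{y\in\R^d}\int_{\R^d}\langle x-y\rangle^{-N}|V(x)|^{q}\rd x$, so \eqref{asspt V Bessel} reads $\delta\ll 1$, and set $R_0:=\overline{\eta}(D)[H_0^{\overline{\eta}}-z]^{-1}$. On $\Ran\overline P$ one has $H_V^{\overline{\eta}}-z=(1+\overline{\eta}(D)VR_0)(H_0^{\overline{\eta}}-z)$, hence $\overline{\eta}(D)[H_V^{\overline{\eta}}-z]^{-1}=R_0(1+\overline{\eta}(D)VR_0)^{-1}$ once the Neumann series converges; since $\overline{\eta}(D)$ is $L^p$‑bounded, everything reduces to the single estimate
\begin{align}\label{plan free bound}
\|VR_0\|_{p\to p}\lesssim \delta^{1/q},
\end{align}
where $q^{-1}=p^{-1}-(p')^{-1}$. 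Indeed, granted \eqref{plan free bound}, smallness of $\delta$ makes $1+\overline{\eta}(D)VR_0$ boundedly invertible on $L^p$, which gives $z\in\rho(H_V^{\overline{\eta}})$ (via $p$‑independence of the spectrum of the elliptic operator $H_V^{\overline{\eta}}$, exactly as in the proof of Proposition \ref{thm universal bound 1 psdo}) and $c_0\leq 2\|VR_0\|_{p\to p}<1$. Running the same reduction for the operator appearing in \eqref{black box 2} is precisely what makes $c_{q,I,V}$ an effective constant.

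To prove \eqref{plan free bound} I would first record the kernel bound for $R_0$. Since $\supp\overline{\eta}$ stays away from the characteristic set $S$ and $z$ ranges over the compact set $I+\I[-1,1]$, the symbol $\overline{\eta}(\xi)(h_0(\xi)-z)^{-1}$ is, uniformly in such $z$, an elliptic symbol of order $-s$; consequently its kernel obeys $|R_0(x-y)|\lesssim|x-y|^{-(d-s)}$ for $|x-y|\lesssim 1$ (a borderline logarithm appearing only if $s=d$, which is excluded) and $|R_0(x-y)|\lesssim_N\langle x-y\rangle^{-N}$ for $|x-y|\gtrsim 1$. In other words $R_0$ is dominated by a Bessel kernel of order $s$ plus a Schwartz tail.

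Next I decompose $\R^d=\bigcup_{j\in\Z^d}Q_j$ into unit cubes and put $V_j:=V\mathbf{1}_{Q_j}$, $f_k:=f\mathbf{1}_{Q_k}$; taking $y$ to be the centre of $Q_j$ gives $\|V_j\|_{L^q(Q_j)}\lesssim\delta^{1/q}$ uniformly in $j$. The kernel of $V_jR_0f_k$ is supported in $Q_j$, and I estimate $\|V_jR_0f_k\|_{L^p(Q_j)}$ in two regimes: for non‑adjacent $Q_j,Q_k$ the rapid decay of $R_0$ gives $\lesssim_M\|V_j\|_{L^q(Q_j)}\langle j-k\rangle^{-M}\|f_k\|_p$ (using $\|f_k\|_1\lesssim\|f_k\|_p$ and $L^q(Q_j)\hookrightarrow L^p(Q_j)$, valid since $q\geq p$); for adjacent $Q_j,Q_k$ I control the local part of the order‑$s$ Bessel kernel by Hardy--Littlewood--Sobolev ($L^p\to L^{p'}$, legitimate because $s<d$ keeps us off the endpoint, with an $L^p\to L^\infty$ substitute when $p\ge d/s$ and a harmless $\varepsilon$‑loss in the single remaining borderline exponent) and then Hölder with $p^{-1}=q^{-1}+(p')^{-1}$ to obtain $\lesssim\|V_j\|_{L^q(Q_j)}\|f_k\|_p\lesssim\delta^{1/q}\|f_k\|_p$. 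Since the functions $V_jR_0f$ have pairwise disjoint supports, summing in $j$ and applying Young's inequality on $\ell^p(\Z^d)$ to the convolution of $(\langle j\rangle^{-M})_j\in\ell^1$ with $(\|f_k\|_p)_k$ yields $\|VR_0f\|_p\lesssim\delta^{1/q}\|f\|_p$, which is \eqref{plan free bound}.

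The main obstacle — and the reason the proof must be organized this way — is the borderline exponent $q=d/s$: there the Bessel potential of order $s=d/q$ just fails to map $L^q$ into $L^\infty$, so one cannot simply split $VR_0=V\cdot(\text{order }{-s})$ and use Hölder globally, and in any case only a \emph{uniformly local} $L^q$‑bound on $V$ is available rather than a global one. Localization circumvents both difficulties simultaneously: the (exponential) decay of the Bessel kernel reduces matters to one cube at a time, and on a single cube the Sobolev embedding invoked is strictly subcritical because $s<d$ forces $p<d/s$ (or else $s>d/p$, giving the trivial $L^p\to L^\infty$ bound). The remaining loose ends — $p$‑independence of $\sigma(H_V^{\overline{\eta}})$, the $H_0^{\overline{\eta}}$‑relative boundedness of $\overline{\eta}V\overline{\eta}$ that makes $H_V^{\overline{\eta}}$ a well‑defined closed operator on $L^p$, and the single Lebesgue exponent with $1/p=s/d$ — all follow from the very same Bessel estimates together with $\delta\ll 1$.
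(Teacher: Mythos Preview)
Your proposal is correct and follows essentially the same route as the paper: reduce to a bound on $\|V\cdot(\text{Bessel kernel of order }s)\|_{p\to p}$, then localize to unit cubes and combine Young/Hardy--Littlewood--Sobolev with H\"older on each piece before resumming. The only cosmetic difference is that the paper first dyadically decomposes the Bessel tail $\langle x-y\rangle^{-N}\lesssim\sum_j 2^{-Nj}\mathbf{1}_{|x-y|\le 2^j}$, rescales each piece to unit scale, and then decomposes only $V$ over the lattice $\Z^d$, whereas you decompose both $V$ and $f$ into cube pieces and split directly into adjacent/non-adjacent pairs; the underlying estimates are identical.
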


\begin{proof}
By the Mikhlin multiplier theorem \cite[Th. 5.2.7]{MR3243734} it suffices to prove this for $\|V\Lambda^{-s}\|_{p\to p}$ in place of $c_0$, where we recall that $\Lambda=(1-\Delta)$. Standard estimates for Bessel potentials (see e.g.\ \cite[Prop. 6.1.5]{MR3243741}) yield $\Lambda^{-s}(x-y)\lesssim |x-y|^{s-d}\e^{-|x-y|/2}$. Clearly, we may bound the exponential from above by $\langle x-y\rangle^{-N}$ for any $N$, which we will do. In view of the elementary estimate
\begin{align}\label{sum of indicator fcts}
\langle x-y\rangle^{-N}\lesssim \sum_{j=0}^{\infty}2^{-Nj}\mathbf{1}_{|x-y|\leq 2^{j}}
\end{align}
it would suffice to prove the following bound on 
$c_j:=\|V\Lambda_{s,j}\|_{p\to p}$, where $\Lambda_{s,j}$ has kernel $|x-y|^{s-d}\mathbf{1}_{|x-y|\leq 2^j}$:
\begin{align*}
c_j\lesssim 2^{j(d+s-d/q)}\sup_{u\in\R^d}\|V\|_{L^q(B(u,2^{j+1}))}.
\end{align*}
By homogeneity it suffices to prove this for $j=0$. Using $|V(x)|\leq \sum_{u\in \Z^d}|V_u(x)|$, with $V_u(x)=V(x)\mathbf{1}_{|x-u|\leq 2}$, we estimate
\begin{align*}
|\langle V\Lambda_{s,0} f,g\rangle |\leq \sum_{u\in \Z^d}\int\int |V_u(x)||g_u(x)||f(y)||x-y|^{s-d}\mathbf{1}_{|x-y|\leq 1}\rd x\rd y.
\end{align*}
Note that, by the triangle inequality, we can insert $\mathbf{1}_{|y-u|\leq 3}$ for free into the integral. Then, by Young's inequality (or by the Hardy--Littlewood--Sobolev inequality if $q=d/s$) and Hölder (once for integrals and once for sums),
\begin{align*}
|\langle V\Lambda_{s,0} f,g\rangle |&\leq \sum_{u\in \Z^d}\|V\|_{L^q(B(u,2))}\|f\|_{L^p(B(u,3))}\|g\|_{L^{p'}(B(u,2))}\\
&\leq \sup_{u\in\Z^d}\|V\|_{L^q(B(u,2))}\big (\sum_{u\in \Z^d}\|f\|_{L^p(B(u,3))}^p\big)^{1/p}\big(\sum_{u\in \Z^d}\|g\|_{L^{p'}(B(u,2))}^{p'}\big)^{1/p'}\\
&\lesssim \sup_{u\in\R^d}\|V\|_{L^q(B(u,2))}\|f\|_p\|g\|_{p'}.
\end{align*}
This completes the proof.
\end{proof}

\subsection*{Acknowledgements} The authors wish to thank Ari Laptev and Rupert Frank for many illuminating discussions.


\begin{thebibliography}{10}

\bibitem{AAD01}
A.~A. Abramov, A.~Aslanyan, and E.~B. Davies.
\newblock Bounds on complex eigenvalues and resonances.
\newblock {\em J. Phys. A}, 34(1):57--72, 2001.

\bibitem{MR435933}
R. Beals.
\newblock Characterization of pseudodifferential operators and applications.
\newblock {\em Duke Math. J.}, 44(1):45--57, 1977.

\bibitem{MR0142896}
M.~\v{S}. Birman.
\newblock On the spectrum of singular boundary-value problems.
\newblock {\em Mat. Sb. (N.S.)}, 55 (97):125--174, 1961.
\bibitem{MR3627408}
S.~B\"ogli.
\newblock Schr\"odinger operator with non-zero accumulation points of complex
  eigenvalues.
\newblock {\em Comm. Math. Phys.}, 352(2):629--639, 2017.

\bibitem{cassano2021eigenvalue}
B. Cassano, L. Cossetti, and L. Fanelli.
\newblock Eigenvalue bounds and spectral stability of lam\'e operators with
  complex potentials, 2021.

\bibitem{MR3608659}
J.-C. Cuenin.
\newblock Eigenvalue bounds for {D}irac and fractional {S}chr\"odinger
  operators with complex potentials.
\newblock {\em J. Funct. Anal.}, 272(7):2987--3018, 2017.


\bibitem{MR4107520}
J.-C. Cuenin.
\newblock Embedded eigenvalues of generalized {S}chr\"{o}dinger operators.
\newblock {\em J. Spectr. Theory}, 10(2):415--437, 2020.


\bibitem{MR4104544}
J.-C. Cuenin.
\newblock Improved eigenvalue bounds for {S}chr\"odinger operators with
  slowly decaying potentials.
\newblock {\em Comm. Math. Phys.}, 376(3):2147--2160, 2020.
%

\bibitem{MR1946184}
E.~B. Davies and Jiban Nath.
\newblock Schr\"{o}dinger operators with slowly decaying potentials.
\newblock {\em J. Comput. Appl. Math.}, 148(1):1--28, 2002.
\newblock On the occasion of the 65th birthday of Professor Michael Eastham.

\bibitem{MR2359869}
E.~B. Davies.
\newblock {\em Linear operators and their spectra}, volume 106 of {\em
  Cambridge Studies in Advanced Mathematics}.
\newblock Cambridge University Press, Cambridge, 2007.

\bibitem{MR3971577}
C. Demeter.
\newblock {\em Fourier restriction, decoupling, and applications}, volume 184
  of {\em Cambridge Studies in Advanced Mathematics}.
\newblock Cambridge University Press, Cambridge, 2020.

\bibitem{MR2820160}
R.~L. Frank.
\newblock Eigenvalue bounds for {S}chr\"odinger operators with complex
  potentials.
\newblock {\em Bull. Lond. Math. Soc.}, 43(4):745--750, 2011.

\bibitem{MR3717979}
R.~L. Frank.
\newblock Eigenvalue bounds for {S}chr\"odinger operators with complex
  potentials. {III}.
\newblock {\em Trans. Amer. Math. Soc.}, 370(1):219--240, 2018.

\bibitem{MR3713021}
R.~L. Frank and B.~Simon.
\newblock Eigenvalue bounds for {S}chr\"odinger operators with complex
  potentials. {II}.
\newblock {\em J. Spectr. Theory}, 7(3):633--658, 2017.

\bibitem{MR3243741}
L.~Grafakos.
\newblock {\em Modern {F}ourier analysis}, volume 250 of {\em Graduate Texts in
  Mathematics}.
\newblock Springer, New York, third edition, 2014.

\bibitem{MR3243734}
L.~ Grafakos.
\newblock {\em Classical {F}ourier analysis}, volume 249 of {\em Graduate Texts
  in Mathematics}.
\newblock Springer, New York, third edition, 2014.

\bibitem{MR620265}
A.~Greenleaf.
\newblock Principal curvature and harmonic analysis.
\newblock {\em Indiana Univ. Math. J.}, 30(4):519--537, 1981.

\bibitem{MR2409163}
M.~Griesemer and D.~Hasler.
\newblock On the smooth {F}eshbach-{S}chur map.
\newblock {\em J. Funct. Anal.}, 254(9):2329--2335, 2008.

\bibitem{MR4150258}
C. Guillarmou, Andrew Hassell, and Katya Krupchyk.
\newblock Eigenvalue bounds for non-self-adjoint {S}chr\"{o}dinger operators
  with nontrapping metrics.
\newblock {\em Anal. PDE}, 13(6):1633--1670, 2020.

\bibitem{MR705278}
L.~H\"ormander.
\newblock {\em The analysis of linear partial differential operators. {II}},
  volume 257 of {\em Grundlehren der Mathematischen Wissenschaften [Fundamental
  Principles of Mathematical Sciences]}.
\newblock Springer-Verlag, Berlin, 1983.
\newblock Differential operators with constant coefficients.

\bibitem{MR1065993}
L.~H\"ormander.
\newblock {\em The analysis of linear partial differential operators. {I}},
  volume 256 of {\em Grundlehren der Mathematischen Wissenschaften [Fundamental
  Principles of Mathematical Sciences]}.
\newblock Springer-Verlag, Berlin, second edition, 1990.
\newblock Distribution theory and Fourier analysis.

\bibitem{MR3739326}
S. Huang, X. Yao, and Q. Zheng.
\newblock Remarks on {$L^p$}-limiting absorption principle of {S}chr\"{o}dinger
  operators and applications to spectral multiplier theorems.
\newblock {\em Forum Math.}, 30(1):43--55, 2018.

\bibitem{MR2024415}
A.~D. Ionescu and D.~Jerison.
\newblock On the absence of positive eigenvalues of {S}chr\"odinger operators
  with rough potentials.
\newblock {\em Geom. Funct. Anal.}, 13(5):1029--1081, 2003.

\bibitem{MR894584}
C.~E. Kenig, A.~Ruiz, and C.~D. Sogge.
\newblock Uniform {S}obolev inequalities and unique continuation for second
  order constant coefficient differential operators.
\newblock {\em Duke Math. J.}, 55(2):329--347, 1987.

\bibitem{MR2094851}
H.~Koch and D.~Tataru.
\newblock Dispersive estimates for principally normal pseudodifferential
  operators.
\newblock {\em Comm. Pure Appl. Math.}, 58(2):217--284, 2005.

\bibitem{MR4076079}
Y. Kwon and S. Lee.
\newblock Sharp resolvent estimates outside of the uniform boundedness range.
\newblock {\em Comm. Math. Phys.}, 374(3):1417--1467, 2020.

\bibitem{MR2540070}
A.~Laptev and O.~Safronov.
\newblock Eigenvalue estimates for {S}chr\"odinger operators with complex
  potentials.
\newblock {\em Comm. Math. Phys.}, 292(1):29--54, 2009.

\bibitem{MR3865141}
Y. Lee and I. Seo.
\newblock A note on eigenvalue bounds for {S}chr\"{o}dinger operators.
\newblock {\em J. Math. Anal. Appl.}, 470(1):340--347, 2019.

\bibitem{MR3950666}
H. Mizutani.
\newblock Eigenvalue bounds for non-self-adjoint {S}chr\"{o}dinger operators
  with the inverse-square potential.
\newblock {\em J. Spectr. Theory}, 9(2):677--709, 2019.

\bibitem{MR3052498}
C.~Muscalu and W.~Schlag.
\newblock {\em Classical and multilinear harmonic analysis. {V}ol. {I}}, volume
  137 of {\em Cambridge Studies in Advanced Mathematics}.
\newblock Cambridge University Press, Cambridge, 2013.

\bibitem{MR0493421}
M.~Reed and B.~Simon.
\newblock {\em Methods of modern mathematical physics. {IV}. {A}nalysis of
  operators}.
\newblock Academic Press [Harcourt Brace Jovanovich, Publishers], New
  York-London, 1978.
  
\bibitem{MR0017816}
F. Rellich.
\newblock \"uber das asymptotische {V}erhalten der {L}\"osungen von {$\Delta
  u+\lambda u=0$} in unendlichen {G}ebieten.
\newblock {\em Jber. Deutsch. Math. Verein.}, 53:57--65, 1943.

\bibitem{Ruiz2002}
A.~Ruiz.
\newblock Harmonic analysis and inverse problems.
\newblock {\em Lecture notes}, 2002.
\newblock
  https://www.uam.es/gruposinv/inversos/publicaciones/Inverseproblems.pdf.

\bibitem{Schwinger122}
J. Schwinger.
\newblock On the bound states of a given potential.
\newblock {\em Proceedings of the National Academy of Sciences},
  47(1):122--129, 1961.

\bibitem{MR2154153}
B.~Simon.
\newblock {\em Trace ideals and their applications}, volume 120 of {\em
  Mathematical Surveys and Monographs}.
\newblock American Mathematical Society, Providence, RI, second edition, 2005.

\bibitem{MR1232192}
E.~M. Stein.
\newblock {\em Harmonic analysis: real-variable methods, orthogonality, and
  oscillatory integrals}, volume~43 of {\em Princeton Mathematical Series}.
\newblock Princeton University Press, Princeton, NJ, 1993.
\newblock With the assistance of Timothy S. Murphy, Monographs in Harmonic
  Analysis, III.
  
\bibitem{MR4009459}
Y. Tadano and K. Taira.
\newblock Uniform bounds of discrete {B}irman-{S}chwinger operators.
\newblock {\em Trans. Amer. Math. Soc.}, 372(7):5243--5262, 2019.

\bibitem{MR4153099}
K. Taira.
\newblock Limiting absorption principle on {$L^p$}-spaces and scattering
  theory.
\newblock {\em J. Math. Phys.}, 61(9):092106, 28, 2020.

\bibitem{MR2743652}
M.~E. Taylor.
\newblock {\em Partial differential equations {II}. {Q}ualitative studies of
  linear equations}, volume 116 of {\em Applied Mathematical Sciences}.
\newblock Springer, New York, second edition, 2011.

\bibitem{wigner1929merkwurdige}
E.~P. Wigner and J.~Von-Neumann.
\newblock {\"U}ber merkw{\"u}rdige {E}igenwerte.
\newblock {\em Z. Phys}, 30:465, 1929.

\end{thebibliography}

\end{document}